\numberwithin{equation}{section}
\numberwithin{figure}{section}
\numberwithin{table}{section}
\theoremstyle{plain}
\newtheorem{thm}{Theorem}[section]
\theoremstyle{definition}
\theoremstyle{plain}
\newtheorem{prop}[thm]{Proposition}
\crefname{prop}{Proposition}{Propositions}
\theoremstyle{plain}
\newtheorem{cor}[thm]{Corollary}
\theoremstyle{plain}
\newtheorem{lem}[thm]{Lemma}
\theoremstyle{remark}
\newtheorem{rem}[thm]{Remark}
\numberwithin{equation}{section}
\def\R{\mathbb{R}}
\def\Rm{\mathbb{R}}
\chardef\bslash=`\\ 
\theoremstyle{remark}
\newcommand{\farc}{\frac}
\def\R{\mathbb{R}}
\def\S{\mathbb{S}}
\newcommand{\eval}[2][\right]{\relax
  \ifx#1\right\relax \left.\fi#2#1\rvert}
\title{Chemotaxis and reactions in biology}
\author{Alexander Kiselev
\thanks{Department of
Mathematics, Duke University, Durham, NC 90320, USA; email: kiselev@math.duke.edu}
\and Fedor Nazarov
\thanks{Department of Mathematical Sciences, Kent State University, Kent OH 44242, USA; email: nazarov@math.kent.edu}
\and Lenya Ryzhik
\thanks{Department of Mathematics, Stanford University, Stanford CA 94305, USA; email: ryzhik@stanford.edu}
\and Yao Yao
\thanks{School of
Mathematics, Georgia Institute of Technology, Atlanta, GA 30332, USA; email: yaoyao@math.gatech.edu}
}
\begin{document}

\maketitle


\begin{abstract}
Chemotaxis plays a crucial role in a variety of processes in biology and ecology. Quite often it acts to improve efficiency of biological reactions.
One example is the immune system signalling,
where infected tissues release chemokines attracting monocytes to fight invading bacteria.
Another example is reproduction, where eggs release pheromones that attract sperm.
A macro scale example is flower scent appealing to pollinators.
In this paper we consider a system of PDE designed to model such processes. Our interest is to quantify
the effect of chemotaxis on reaction rates compared to pure reaction-diffusion. We limit consideration to surface chemotaxis, which is well motivated from the point
of view of many applications. Our results provide the first insight into situations where chemotaxis can be crucial for reaction success,
and where its effect is likely to be limited.
The proofs are based on new analytical tools; a significant part of the paper is dedicated to
building up the linear machinery that can be useful in more general settings.
In particular we establish precise estimates on the rates of convergence to ground state
 for a class of Fokker-Planck operators with potentials that grow at a logarithmic rate at infinity.
 These estimates are made possible by a new sharp weak weighted Poincar\'e inequality improving in particular a
result of Bobkov and Ledoux \cite{bl}.
\end{abstract}

\maketitle

\section{Introduction}

Chemotaxis describes the motion of cells or species that sense and attempt to move
towards higher (or lower) concentration of some chemical.  Its first mathematical
studies go back to Patlak \cite{Patlak} and
Keller-Segel \cite{KS1}, \cite{KS2}.
The Keller-Segel system introduced in the latter work describes a population of bacteria
or mold secreting an attractive chemical substance, and remains
the most studied model of chemotaxis.
In the simplified parabolic-elliptic form, this equation can be written as
(see, e.g. \cite{Pert})
\begin{equation}\label{chemo1}
\partial_t \rho - \Delta \rho +
\chi \nabla\cdot(\rho \nabla (-\Delta)^{-1} \rho)=0, \,\,\,\rho(x,0)=\rho_0(x).
\end{equation}
The last term in the left side describes the attraction of $\rho$ by
a chemical with the concentration~$c(x,t) = (-\Delta)^{-1} \rho(x,t).$
This is an approximation to the diffusion equation
\[
\partial_tc=\kappa\Delta c+R\rho,
\]
under the assumption that $\kappa\sim R\gg 1$, so that
the chemical is both produced and diffuses on
faster time scales than those for the rest of dynamics of~\eqref{chemo1}. The literature on the
Keller-Segel equation is very extensive. In particular,
a number of different variants of \eqref{chemo1} have been derived from more basic kinetic
models (see, e.g. \cite{HO1,OH1,DS,JV1,PVW}).
It is known that in dimensions larger than one solutions to \eqref{chemo1} can
concentrate and become singular in a finite time. We refer to
\cite{h,Hor2,Pert}  for more details and further references.

In many settings in biology where chemotaxis is present, it facilitates and enhances
success rates of reaction-like processes.
One example is reproduction for many species, where eggs secrete chemicals that attracts sperm
and help improve fertilization
rates. This is especially well studied for marine life such as corals, sea urchins, mollusks,
etc (see \cite{HRZZ,RZ,ZR} for further
references), but the role of chemotaxis in fertilization extends to a great number of
species, including humans~\cite{Raltetal}.
In the same vein, many plants appeal primarily to the insects' sense of smell to attract pollinators.
Another process where chemotaxis plays an important role is mammal immune systems fighting bacterial infections.
Inflamed tissues release special proteins, called chemokines,
that serve to chemically attract monocytes, blood killer cells, to the source
of infection   \cite{Desh}, \cite{Taub}.
Chemotaxis can also be involved when things go awry, for instance,
playing a role in tumor growth \cite{VanCoil}.

In the mathematical literature, the studies of equations including both
chemotaxis and reactions focused mainly on existence and regularity
of solutions as well as general features of the long time dynamics (see  \cite{cpz,ESV,EW,MT,OTYM,TW,W1,W2,Winkler3} for further references).
To the best of our knowledge, there are very few works where the question of how chemotaxis affects
the reaction rates has been studied rigorously or even modeled computationally.
As far as we know, the first step in this direction has been taken in \cite{kr}, \cite{kr2}
where a generalization of \eqref{chemo1} including an
absorbing reaction and a fluid flow has been considered
\begin{equation}\label{chemo2}
\partial_t \rho +(u \cdot \nabla)\rho - \Delta \rho +
\chi \nabla\cdot(\rho \nabla (-\Delta)^{-1} \rho)
=-\epsilon \rho^q, \,\,\,\nabla \cdot u=0, \,\,\,\rho(x,0)=\rho_0(x) \geq 0.
\end{equation}
This work was motivated by modeling the life cycle of corals. Corals, and many other
marine species, reproduce by broadcast spawning. It is a fertilization strategy whereby males
and females release sperm and egg gametes  that rise to the surface of the ocean.
As they are initially separated by the ambient water, an effective surface mixing is necessary
for a successful fertilization. For coral spawning, field measurements of fertilization
rates are usually around~$50$\%, and are often as high as $90$\%~\cite{lasker,pennington}.
On the other hand, numerical simulations based on purely reaction-diffusion models \cite{ds}
predict fertilization rates of less than~$1$\% due to the strong dilution of gametes.
A more sophisticated model, taking into account the instantaneous details of the advective
transport was proposed in \cite{ccw,chw}.
Adding fluid flow to the model can account for part of the gap between simulations and field
measurements, but appears unlikely to completely explain it \cite{kr}.
However, as we already mentioned, there is also experimental evidence that chemotaxis plays a
role in coral and other marine animals fertilization: eggs release a chemical that attracts the
sperm~\cite{coll1,coll2,miller,miller2}.

The results of \cite{kr,kr2} show, in the framework of \eqref{chemo2}, that the role of
chemotaxis in reaction enhancement can be quite significant --
especially when reaction is weak, as is known to be the case in many biological
processes~\cite{VCCW}. The efficiency of the reaction can be measured by the decay of the
total mass of the remaining density
\[
m(t)=\int \rho(x,t)\,dx.
\]
If $\chi=0,$ then the decay of $m(t)$ is very slow if $\epsilon$ is small,
uniformly in the incompressible fluid velocity $u$ \cite{kr}.
On the other hand, if $\chi \ne 0$,
then in dimension two, relevant for the corals application,
the extent of decay and time scales of decay of $m(t)$ are independent
of $\epsilon$, and the decay can be very significant and fast if the
chemotactic coupling is sufficiently strong.
While the results of \cite{kr} and \cite{kr2} are suggestive, taking \eqref{chemo2} as a model
makes a strong simplifying assumption  that the densities of male and female species
are equal and are both chemotactic on each other. In reality, only the male density is
chemotactic, hence \eqref{chemo2} can be expected to overestimate the effect of chemotaxis
on the reaction rates.

Although there are certainly examples of mold and bacteria that are chemotactic on the chemicals
they themselves release, significantly more numerous situations in biology involve species
that are chemotactic on a chemical secreted by other agents. Most of the examples mentioned above
are of this kind.  In this paper, we would like to initiate qualitative
analysis of a more realistic system of equations modeling
chemotaxis enhanced reaction processes, of the form
\begin{equation}
\begin{aligned}
&\partial_t \rho_1 - \kappa \Delta \rho_1 + \chi \nabla\cdot(\rho_1 \nabla(-\Delta)^{-1} \rho_2) = -\epsilon \rho_1 \rho_2\\
&\partial_t \rho_2 = -\epsilon \rho_1 \rho_2.
\end{aligned} 
\label{eq:before_normalization}
\end{equation}
There is no ambient fluid advection:
as the first step, we assume that the fluid flow is adequately modelled by effective diffusion.
The chemically attracted density is $\rho_1;$ the density $\rho_2$ that produces the attractant is assumed to be immobile, which is a realistic assumption
in many interesting problems: for example, the
inflamed tissue releasing chemokines and attracting monocytes,
plants attracting insects, or immobile eggs attracting sperm in the mammal reproduction tract are in this category.
We also maintain the parabolic-elliptic structure, with the assumption that the
signaling chemical diffusion time is much shorter than other relevant time scales.
The system \eqref{eq:before_normalization} is one of the most natural first step models in analyzing any situation where a fixed target aims
to attract, by using a fast diffusing chemical, a diffusing and mobile
species which is involved in some kind of reaction with the target.
Systems of this type have been certainly analyzed in the literature - for example, in \cite{cpz} a system of a very similar form but
with different chemotactic term has been considered as a model of angiogenesis.
However, the focus of most such studies has been on proving global regularity, asymptotic behavior and finding special classes of self-similar solutions.
Perhaps the closest to our aim here are the papers \cite{ES1,CKL} that yield some estimates on the effect of chemotaxis on reaction in a related setting.
However, to the best of our knowledge, our paper is the first attempt
at sharp qualitative estimates for the scaling rules of the effect of chemotaxis on the reaction rates
in a setting of chemotaxis system involving two distinct densities.
Here, we will limit the consideration to two spatial dimensions and to the classical form
of the Keller-Segel chemotaxis flux.
We make comments on some possible extensions and generalizations in Section~\ref{Disc}.


The purpose of this paper is twofold. First, we provide a careful analysis of the linear problem
corresponding to \eqref{eq:before_normalization}. This analysis is interesting in its own right, and focuses
on a class of Fokker-Planck operators with logarithmic potentials that is very natural especially in dimension two.
This linear problem models convergence of a density attracted by a fast diffusing chemical to a target that releases it.
Secondly, we present an initial nonlinear application of the techniques we develop which also involves the reaction term.
In the nonlinear case, this paper focuses on the radial setting and develops a general framework for applying the
linear techniques for analysis of reaction rates. Generalizations to more general settings will be addressed in future work;
the Section~\ref{Disc} outlines some of the avenues that we expect to pursue. An interesting by-product of our work is
a suggestion that the traditional Keller-Segel term may be ill-suited to accurately modeling reaction enhancement effects,
and a so-called flux-limited version may be more appropriate. This is also discussed in more detail below and in Section~\ref{Disc}.

To describe our main results, we begin from the nonlinear application that will motivate the linear problem.
For the sake of simplicity, we assume that the initial condition for $\rho_2$
is compactly supported and smooth:
$\rho_2(x,0) = \theta \eta(x),$ where $\theta$ is a coupling constant, and
$\eta(x) \in C_0^\infty(\R^2)$ is close to the characteristic function
of the disc $B_R$ centered at the origin in the $L^1$ norm -- obviously, we can make
it as close as we want. It is useful to re-scale \eqref{eq:before_normalization};
by a space-time rescaling we can normalize the parameters $\kappa$ and $R$,
so that (\ref{eq:before_normalization}) becomes
\begin{equation}
\begin{aligned}
&\partial_t \rho_1 - \Delta \rho_1 +  \chi \nabla\cdot(\rho_1 \nabla(-\Delta)^{-1} \rho_2) = -\epsilon \rho_1 \rho_2\\
&\partial_t \rho_2 = -\epsilon \rho_1 \rho_2,
\end{aligned} 
\label{eq:original_system}
\end{equation}
where for simplicity we keep the same notation for variables and parameters.
The connection between parameters before and after rescaling will be documented after Theorem~\ref{main1} below.
The initial condition for $\rho_2$ has the form $\rho_2(x, 0) := \theta \eta(x)$,
with some $\theta>0$ and radial
$\eta \in C_0^\infty(\Rm^2)$, such that  $\eta(x)$ is close
in $L^1$ to the characteristic function $\chi_{B_1}(x)$ of the unit disk, with
\[
0\le \chi_{B_1}(x)\le\eta(x) \le 1.
\]
It is straightforward to extend our results to more general radial initial data
$\rho_2(x,0) \in C_c^\infty(\Rm^2)$ or just rapidly decaying.
For the initial condition $\rho_1(x, 0) \geq 0$ for (\ref{eq:original_system}),
we assume that it is smooth and decaying quickly at infinity, and is located at a distance $\sim L$ from
the origin. Specifically, we will assume that its mass in a ball $B_L(0)$ is at least~$M_0$ while the mass inside
$B_1$ is much smaller than $M_0:$
\begin{equation}\label{oct714}
\int_{|x|\le L}\rho_1(x,0)dx\ge M_0, \,\,\,\int_{|x|\le 1}\rho_1(x,0)dx \ll M_0.
\end{equation}
Thus, $M_0,$ $L,$ $\theta,$ $\chi$ and $\epsilon$ are the parameters left in the problem,
and  it is convenient to combine the mass of $\rho_2$ that is $\sim \theta$ and $\chi$ into a single
parameter~$\gamma := \theta \chi.$
We are primarily interested in the situations where $M_0$ is large,
so that~$M_0\epsilon \gg \gamma \gg 1$ and~$M_0 \gg \theta$; the motivation for such relationship between the
parameters will be discussed below.
Our goal is to compare the efficiency of reaction,
that is, the decay rate of the integral
\[
\int_{\R^2} \rho_2(x,t)\,dx,
\]
with and without chemotaxis.
A reasonable measure of the reaction rate is a typical "half-time" scale  during
which about half of the initial mass $\sim \theta$ of $\rho_2$ will react. More precisely,
we will say that half-time $\tau_C$ is the time by which the mass of $\rho_2$ decreases by the
amount~$\pi\theta/2$.
Our main nonlinear application is
\begin{thm}\label{main1}
Assume that the initial conditions $\rho_1(\cdot,0)$ and $\rho_2(\cdot,0)$
are as above and, in addition, radially symmetric.
Let $\frac{\chi \gamma}{\epsilon} \geq c>0.$
There exists $B>0$ sufficiently
large that depends only on $c$, so that if
\begin{equation}\label{oct702}
\farc{M_0\epsilon}{\gamma},\gamma,\farc{M_0}{\theta} \geq B,
\end{equation}
then the half-time for the solution of the system \eqref{eq:original_system} satisfies
\begin{equation}\label{tauCfin}
\tau_C \lesssim \frac{L^2}{\gamma}+ \log \gamma.
\end{equation}
On the other hand, if $\chi=0$ and $\rho_1(x,0)$
is supported in $\{|x|\ge L/2\}$, then
the pure reaction-diffusion half-time
satisfies $\tau_D \gtrsim L^2/\log(\epsilon M_0).$
\end{thm}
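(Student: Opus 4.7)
The plan is to handle the upper and lower bounds separately: the bound with $\chi\neq 0$ relies on the linear Fokker--Planck theory with logarithmic potentials developed earlier in the paper, while the bound with $\chi=0$ is a direct heat-kernel estimate. The reaction rate $-\frac{d}{dt}\int\rho_2\,dx = \epsilon\int\rho_1\rho_2\,dx$ is in both cases the central quantity to control.

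For the upper bound, I would split the evolution into two phases matching the two terms of \eqref{tauCfin}. In the \emph{transport phase} of length $c_1 L^2/\gamma$, the attractant $c=(-\Delta)^{-1}\rho_2$ for the initial radial $\rho_2\approx\theta\chi_{B_1}$ satisfies $\partial_r c(r)\approx -\theta/(2r)$ for $r\ge 1$, so the chemotactic drift $\chi\nabla c$ is radially inward with speed $\gamma/(2r)$; characteristics then obey $r(t)^2 = r(0)^2 - \gamma t$ and reach $r\sim 1$ by time $\sim L^2/\gamma$. I would make this rigorous through a radial second-moment estimate for $\rho_1$: the hypotheses $M_0\epsilon/\gamma\ge B$ and $M_0/\theta\ge B$ in \eqref{oct702} ensure that at most an $O(1/B)$ fraction of $\rho_2$ is consumed during this phase, so the drift remains close to its initial form, while diffusion contributes only an additive $O(L^2/\gamma)$ correction to $\int|x|^2\rho_1\,dx$. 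The output of the phase is that a definite fraction of the initial $\rho_1$-mass $M_0$ has been brought into a small neighborhood of $B_1$.

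In the \emph{ground-state phase}, with $\rho_2$ still essentially frozen, the Fokker--Planck operator $\mathcal L=-\Delta+\chi\nabla\cdot(\,\cdot\,\nabla c)$ admits a ground state $\Phi\propto e^{\chi c}$ which, since $e^{\chi c}$ behaves like $r^{-\gamma/2}$ outside $B_1$ and $\gamma\ge B$ is large, is strongly concentrated on $\mathrm{supp}\,\rho_2$. The sharp weak weighted Poincar\'e inequality proved earlier in the paper yields convergence of $\rho_1/M_0$ toward $\Phi$ on a time scale $\lesssim\log\gamma$ (the logarithmic factor reflecting the logarithmic growth of the potential). Once relaxation has taken place, $\int_{\mathrm{supp}\,\rho_2}\rho_1\,dx\gtrsim M_0$, so $-\frac{d}{dt}\int\rho_2\,dx\gtrsim \epsilon M_0\int\rho_2\,dx$ and $\rho_2$ loses half of its mass in additional time $O(1/(\epsilon M_0))$, which is much smaller than $\log\gamma$ by $\epsilon M_0/\gamma\gg 1$. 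The principal technical obstacle is precisely this adiabatic step: running the linear convergence estimate while the coefficients of $\mathcal L$ themselves evolve, which I would handle by freezing $\rho_2$ on short subintervals and iterating, using the first hypothesis in \eqref{oct702} to bound the relative change of $\rho_2$ on each subinterval.

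For the lower bound, set $\chi=0$, so $\partial_t\rho_1-\Delta\rho_1=-\epsilon\rho_1\rho_2\le 0$ and the maximum principle gives $\rho_1(\cdot,t)\le e^{t\Delta}\rho_1(\cdot,0)$. Since $\rho_1(\cdot,0)$ is supported in $\{|x|\ge L/2\}$ and $\mathrm{supp}\,\rho_2(\cdot,t)\subset\mathrm{supp}\,\rho_2(\cdot,0)$ remains inside a fixed bounded set (the equation for $\rho_2$ is an ODE), the heat-kernel bound yields
\begin{equation*}
\sup_{x\in\mathrm{supp}\,\rho_2}\rho_1(x,t)\le \frac{M_0}{4\pi t}\,e^{-c_0 L^2/t}
\end{equation*}
for a numerical constant $c_0>0$. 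Combined with $\rho_2\le\theta$ this gives $-\frac{d}{dt}\int\rho_2\,dx\lesssim \epsilon\theta M_0\, t^{-1}e^{-c_0 L^2/t}$, and integrating via the substitution $u=c_0 L^2/s$ shows that the total reacted mass at time $T=c'L^2/\log(\epsilon M_0)$ is at most a small multiple of $\theta$ provided $c'$ is chosen small enough; this is the desired $\tau_D\gtrsim L^2/\log(\epsilon M_0)$.
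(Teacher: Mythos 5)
Your treatment of the diffusive lower bound ($\chi=0$) matches the paper's: compare $\rho_1$ to the free heat solution, bound it by $\frac{M_0}{4\pi t}e^{-cL^2/t}$ on $\mathrm{supp}\,\rho_2$, and integrate. The upper bound, however, has a genuine gap at the ``ground-state phase,'' specifically the step
\[
\int_{\mathrm{supp}\,\rho_2}\rho_1\,dx\gtrsim M_0\ \Longrightarrow\ -\frac{d}{dt}\int\rho_2\,dx\gtrsim \epsilon M_0\int\rho_2\,dx.
\]
That implication fails: the reaction rate is $\epsilon\int\rho_1\rho_2$, and under a Keller--Segel drift generated by a ball of $\rho_2$ the equilibrium $e^{\chi(-\Delta)^{-1}\rho_2}\propto e^{\gamma(1-r^2)/4}$ on $B_1$ concentrates essentially all of $\rho_1$ in a disc of radius $\sim\gamma^{-1/2}$. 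On the bulk of $\mathrm{supp}\,\rho_2$ (say $r\in(1/2,1)$) the relaxed $\rho_1$ is exponentially small in $\gamma$, so only an $O(\theta/\gamma)$ piece of $\rho_2$ near the origin reacts quickly, after which $\rho_1$ must chase the receding $\rho_2$ in a way your ground-state picture does not control. This ``excessive concentration'' is exactly the obstruction the paper identifies in Section~\ref{Disc} as the reason for the radial hypothesis. The paper's substitute is the pass-through argument of Section~\ref{system}: Proposition~\ref{prop:mass_eaten} uses a parabolic Harnack inequality for $\partial_r M$ to get $\int_0^{t_3+1}\rho_1(r,t)\,dt\gtrsim M_0/\gamma$ uniformly in $r\in(1/2,1)$, and then the exact formula $\rho_2(r,t)=\rho_2(r,0)\exp\{-\epsilon\int_0^t\rho_1\,ds\}$ kills $\rho_2$ on the whole annulus as soon as $\epsilon M_0/\gamma\ge B$. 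That mechanism, which counts reaction \emph{during transit} rather than at equilibrium, is the missing idea.

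Two further points. The $\log\gamma$ in \eqref{tauCfin} is not a separate equilibration time after arrival; in the paper it is a technical loss ($t_1$ in Theorem~\ref{thm1}) in the linear transport estimate, traced to the extra factor of $\gamma$ in the Carlen--Loss $L^\infty$ bound. And your proposed adiabatic freezing is inverted: $M_0\epsilon/\gamma\ge B$ makes reaction \emph{fast} relative to chemotaxis, so $\rho_2$ changes rapidly once $\rho_1$ arrives, and the first hypothesis in \eqref{oct702} cannot be used to bound the relative change of $\rho_2$ on short subintervals. The paper avoids any quantitative adiabaticity by using $\tau_C$ as a stopping time: for $t\le\tau_C$ at least half of $\rho_2$'s mass survives, hence its drift dominates the fixed annular-charge potential $H=\chi(-\Delta)^{-1}\big(\theta(\chi_{B_1}-\chi_{B_{1/\sqrt{2}}})\big)$, and the mass comparison principle of Section~\ref{sec:comparison_principle} (Propositions~\ref{lemma:FP_comparison} and~\ref{prop:mass_comparison}) converts this one-sided, time-uniform drift bound into control of $\rho_1$ via a fixed, time-independent linear Fokker--Planck problem --- which is where the weak Poincar\'e machinery actually enters, not in a post-arrival relaxation step.
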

\noindent\it Remarks. \rm
1. Note that time-space rescaling leading from \eqref{eq:before_normalization} to \eqref{eq:original_system} is given by
$x'=x/R,$ $t'=tR^2/\kappa.$ The new parameters are given by $\chi'=\chi R^2/\kappa,$ $\epsilon'=\epsilon R^2/\kappa,$
$M_0'=M_0/R^2,$ $L'=L/R,$ and $\gamma' = \theta \chi' = \theta R^2 \chi/\kappa.$ As we mentioned above, after the change of variables, we reverted to
denoting new parameters without primes. The conditions \eqref{oct702} in the original parameters take form
$M_0 \epsilon/(\theta R^2 \chi)\geq B,$ $\theta R^2 \chi /\kappa \geq B,$ $M_0/(\theta R^2) \geq B.$  Here $\theta R^2$ $\sim$
initial mass of $\rho_2.$ \\
2. The  assumption (\ref{oct702})
is reasonable in many applications.
For example, in coral spawning, a typical number of sperm is of the order $\sim 10^{10},$
the number of eggs $\sim 10^6$, and~$\epsilon \sim 10^{-2}.$
It is difficult to find data on the measurements of strength of chemotactic coupling in biological literature. \\
3. The notation $\lesssim,\gtrsim$ and $\sim$ means, as usual, bounds with universal constants independent of the key parameters of the problem. \\
4. In Section~\ref{trancompsec}, we prove Theorem~\ref{mainalt1}, a variant of Theorem~\ref{main1}, that eliminates the $\log \gamma$ term
in \eqref{tauCfin} at the price of providing less precise information about the dynamics of the system. \\

We believe that, possibly up to a logarithmic in $\gamma$ correction, the result of Theorem~\ref{main1} is sharp. 
It provides an indication that 
the presence
of chemotaxis can significantly improve reaction rates if
$\gamma\gg \log(M_0\epsilon)$.
In particular, in the framework of  \eqref{eq:original_system}, one can expect chemotaxis
to provide significant improvement only if $\gamma$ is sufficiently large.

There are natural further questions discussed in some detail in Section~\ref{Disc}.
Here, let us just comment on the radial assumption on the initial data. The technical
reason behind this condition is an artifact of the Keller-Segel form of chemotaxis term.
As the chemical concentration is $(-\Delta)^{-1}\rho_2$, the $\rho_1$ species concentrates
near the center of the support of $\rho_2$, and, in general, it may arrive there without
ever meeting $\rho_2$, so that reaction is not enhanced at all. This is prohibited in
the radial geometry where $\rho_1$-species will have to see $\rho_2$ as they move toward
the origin.
We expect that
the techniques developed in this paper should apply to other chemotactic models and to a broader
class of initial data configurations, with
Theorem~\ref{main1} as an initial application.

The proof of Theorem~\ref{main1} relies on several ideas. We expect that the main positive
effect of chemotaxis is in speeding up transport of the species $\rho_1$ towards the origin
where the species~$\rho_2$
is concentrated. To capture this, we estimate the transport
stage by comparing the solutions of the coupled system to
the solutions of the linear Fokker-Planck equation
with a properly chosen time-independent potential 
\begin{equation}\label{fokker11}
\partial_t \rho - \Delta \rho + \nabla\cdot (\rho \nabla H) =0.
\end{equation}
One would wish to take $\rho(x,0) = \rho_1(x,0),$ and $H = (-\Delta)^{-1}\rho_2.$
However, the time dependence of $H$ would complicate the analysis. Instead, we use
a comparison to the solution
to \eqref{fokker11} with "the weakest" attractive potential $H(x)$ in an appropriate class.
The operator
\[
F_H\phi = -\Delta\phi + \nabla\cdot (\phi \nabla H),
\]
appearing in \eqref{fokker11} is
self-adjoint and non-negative on the weighted space $L^2(e^{-H},dx)$ and, if $\gamma$ is
sufficiently large, has a ground state $e^H.$ The rate of convergence of
the solution to the ground state for large times corresponds to
transport of the density $\rho$ from far field towards the region with higher values of $H(x).$
As we will see, the worst case potential is 
\begin{equation}\label{potential}
H(x) = \gamma (-\Delta)^{-1} \big( \chi_{B_1}(x) - \chi_{B_{1/\sqrt{2}}}(x) \big).
\end{equation}
It is not difficult
to see that in dimension two, $H(x)\approx - (\gamma \pi/{2}) \log |x|$ for $|x|\gg 1,$
and we need to deal with a Fokker-Planck equation with a logarithmic potential.
We stress that all estimates we prove for the linear problem \eqref{fokker11} apply
in full generality, without radial constraint on $f.$

Thus, our principal goal in this paper is to provide precise bounds on the rate of
convergence to the ground state for this class of Fokker-Planck operators, and to develop
a comparison scheme to use these estimates in the analysis of nonlinear problems.
The rate of convergence to an equilibrium for Fokker-Planck operators is a classical
subject, and
the literature on this question 
is vast.
The uniformly convex case $-D^2H(x)  \geq \lambda \hbox{Id}$ with $\lambda>0$,
can be viewed as a direct application
of Brascamp-Lieb ideas \cite{BrL}, and the operator $F_H$ has a spectral gap, so that
convergence to the ground state is exponential in time. There has been much work on generalizations of these results. An extension to, in particular,
$H(x) = |x|^\beta$ with $1<\beta<2,$ and further references can be found in \cite{amtu}.
For slower growth potentials there can be no spectral gap.
R\"ockner and Wang \cite{RW} provide convergence to equilibrium estimates
for $H(x) = |x|^\beta$ with $0<\beta <1$ which are sub-exponential in time,
as well as algebraic in time convergence bounds for a logarithmic potential
-- which is precisely our case. However, the dependence of these bounds on the coupling constant
is not sufficiently sharp for the applications that motivate us.
There is also related work based on probabilistic techniques, in particular,
by Veretennikov~\cite{PVer,Ver}. These estimates are designed with different
applications in mind, and are also not sufficient for our purpose.

While weighted Poincar\'e inequalities can be used to prove exponential in time convergence
to equilibrium for the Fokker-Planck operators, the tools that can be deployed when
the rate of convergence is slower are called weak Poincar\'e or Poincar\'e-type inequalities.
An inequality of this kind involving power weights has been proved by Bobkov
and Ledoux~\cite{bl}.
That paper contains, in particular, the following inequality for every $f \in C_0^\infty(\R^d)$
\begin{equation}\label{bobl1}
\int_{\R^d} |f-\overline{f}|^2 v(x)\,dx \leq \frac{C}{\gamma}
\int_{\R^d} |\nabla f|^2 (1+|x|^2)v(x)\,dx,
\end{equation}
with the weight $v(x) = (1+|x|^2)^{-\gamma/2}$ for some sufficiently large $\gamma$, and
\[
\overline{f}=\int_{\Rm^d}f(x)v(x)dx.
\]
The proof of Bobkov and Ledoux is based on convexity techniques, and builds on generalizations
of the Brascamp-Lieb inequality \cite{BrL}. For our application, we need a version
of \eqref{bobl1} with the weight equal to $w(x) = e^H.$ While the behavior of $w(x)$
and $v(x)$ near infinity is virtually
identical, the weight $w(x)$ does not seem to satisfy the convexity assumptions needed for
the techniques of \cite{bl} to work. Moreover, the factor $C/\gamma$ in the right side
of \eqref{bobl1} would lead to sub-optimal estimates on the rate of convergence to the ground state.
One could verify that such estimate
could only yield $\tau_C \lesssim L^2$ in Theorem~\ref{main1}.
This is not very interesting, since pure reaction-diffusion is not outperformed in relevant regimes. We prove the following improved weighted Poincar\'e-type inequality
by differentiating the regions where behavior of the weight $w$ is different. 
\begin{thm}\label{main2}
Let $\gamma > 2,$ $f \in C_0^\infty(\R^2),$ and $w(x)=e^H,$ with
$H$ given by \eqref{potential}. Then the following weak weighted Poincar\'e inequality holds:
\begin{equation}\label{wP2}
\int_{\R^2} |f - \overline{f}|^2 w(x)\,dx \leq C \int_{B_1} |\nabla f|^2 w(x)\,dx + \frac{C}{\gamma^2}
\int_{B^c_1} |\nabla f|^2 (1+|x|^2) w(x)\,dx.
\end{equation}
\end{thm}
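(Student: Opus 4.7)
The plan is to prove~\eqref{wP2} by splitting $\R^2$ into three regions in which the weight $w = e^H$ exhibits qualitatively different behavior, to derive a one-dimensional weighted Poincar\'e inequality in each, and to glue the three estimates at the end. Since $-\Delta H = \gamma(\chi_{B_1} - \chi_{B_{1/\sqrt 2}})$, the radial profile $H(r)$ is constant on the inner ball $B_{1/\sqrt 2}$ (it is harmonic and smooth at the origin, so the only admissible radial harmonic is a constant), concave on the annulus $A := B_1 \setminus B_{1/\sqrt 2}$ (a direct radial ODE gives $h''(r) \leq -3\gamma/4$ and $h'(r)/r \leq 0$ on $A$), and harmonic with $H(r) \sim -\tfrac{\gamma \pi}{2}\log r$ on $B_1^c$. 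Correspondingly, $w$ is constant on $B_{1/\sqrt 2}$, log-concave on $A$, and a power of $|x|$ on $B_1^c$. After the normalization $\bar f = 0$, and writing $f_{\mathrm{in}},\, f_{\mathrm{out}}$ for the $w$-weighted means of $f$ on $B_1$ and $B_1^c$, the identity
\[
\int_{\R^2} f^2 w = \int_{B_1}(f - f_{\mathrm{in}})^2 w + \int_{B_1^c}(f - f_{\mathrm{out}})^2 w + \frac{w(B_1)\,w(B_1^c)}{w(\R^2)}(f_{\mathrm{in}} - f_{\mathrm{out}})^2
\]
reduces matters to three separate estimates.

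The main step is the outer inequality $\int_{B_1^c}(f - f_{\mathrm{out}})^2 w \lesssim \gamma^{-2}\int_{B_1^c}|\nabla f|^2 (1+|x|^2) w$, which supplies the sharp $\gamma^{-2}$ factor. I would expand $f$ on $B_1^c$ in a Fourier series in the angular variable and perform the logarithmic substitution $v = \log |x|$. For the radial mode both sides become integrals against the \emph{same} one-sided exponential measure $e^{-\beta v}\,dv$ on $(0,\infty)$ with $\beta = \tfrac{\gamma \pi}{2} - 2$; the match arises because the extra factor $(1+|x|^2) \sim |x|^2$ on the right compensates exactly the $|x|^{-2}$ produced by $\partial_r$ under the change of variable. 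The sharp Poincar\'e constant of the half-line exponential measure of rate $\beta$ is $4/\beta^2$, which yields the claimed $O(\gamma^{-2})$. The non-radial modes automatically have vanishing angular integral; their radial profiles are treated by the same 1D exponential-weight Poincar\'e argument, with the angular contribution $k^2|f_k|^2/r^2$ to $|\nabla f|^2$, combined with the $(1+|x|^2)$ weight, providing additional room for large $k$.

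For the inner estimate $\int_{B_1}(f-f_{\mathrm{in}})^2 w \lesssim \int_{B_1}|\nabla f|^2 w$, which must hold with a $\gamma$-independent constant, I would further split $B_1 = B_{1/\sqrt 2} \cup A$. On $B_{1/\sqrt 2}$ the constancy of $w$ reduces the bound to the classical Poincar\'e inequality on a disk. On $A$ the log-concavity of $w$ combined with the lower bound $-h''(r) \geq 3\gamma/4$ on the radial Hessian permits a Brascamp--Lieb estimate in the radial direction, while the angular Poincar\'e on each circle is trivial since $w$ is uniform there; together these give a Poincar\'e inequality on $A$ with universal constant. The local means on $B_{1/\sqrt 2}$ and $A$ are reconciled by a trace argument on the shared boundary $\partial B_{1/\sqrt 2}$. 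The cross term $(f_{\mathrm{in}} - f_{\mathrm{out}})^2$ is controlled by a trace estimate on $\partial B_1$, where both $w$ and $(1+|x|^2)$ are $O(1)$, so the mean-value jump can be absorbed into either of the two gradient integrals on the right-hand side of~\eqref{wP2}. I expect the most delicate step to be the sharp $\gamma^{-2}$ dependence in the non-radial outer modes: a bound using only the angular derivative gives a constant of order one, and the mode-by-mode reduction to the 1D exponential-weight Poincar\'e inequality, in which the angular derivative serves as a ``mass gap'' that absorbs the zero-frequency obstruction, appears to be essential.
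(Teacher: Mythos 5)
There is a genuine gap in the exterior estimate for the non-radial Fourier modes. After the logarithmic substitution $v=\log r$ the radial mode does indeed reduce to the one-sided exponential-weight Poincar\'e inequality with sharp constant $4/\beta^2\sim\gamma^{-2}$, because there the variance is taken with respect to the weighted mean $f_{\mathrm{out}}$. But for a mode $\psi_n e^{in\phi}$ with $n\ge 1$, the angular integral of the mode already vanishes, so no constant gets subtracted from $\psi_n$ at all; the outer estimate you need is
\[
\int_1^\infty \psi_n^2\, r w\,dr \;\lesssim\; \gamma^{-2}\int_1^\infty\Bigl(r^2\psi_n'(r)^2+n^2\psi_n^2\Bigr)rw\,dr,
\]
with \emph{no mean subtracted on the left}. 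Testing with $\psi_n\equiv$ const on $[1,\infty)$, both sides become weighted masses of $w$ over $B_1^c$, and the inequality forces $1\lesssim n^2/\gamma^2$: it is simply false for $1\le n\ll\gamma$. The angular ``mass gap'' $n^2$ is not large enough to replace the mean-subtraction when $n\ll\gamma$, so your statement that the angular derivative ``absorbs the zero-frequency obstruction'' is where the argument breaks. The obstruction is the boundary value $\psi_n(1)$, and it can only be controlled by importing gradient energy from inside $B_1$; moreover, because that import reintroduces a piece of the inner variance, the resulting self-referential term must come with a factor strictly less than one so it can be absorbed.

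This is precisely what the paper's proof supplies and your sketch does not. The paper subtracts a single constant $\tilde f(r_1)$ in all three regions (rather than two different means $f_{\mathrm{in}},f_{\mathrm{out}}$), proves the chained inequalities $I_3\le \tfrac{C}{\gamma^2}J_3+\tfrac{C}{\gamma^2}J_2+\tfrac14 I_2$ and $I_2\le\tfrac{C}{\gamma}J_2+\tfrac{C}{\gamma}J_1+\tfrac14 I_1$, and then sums; the $1/4$ allows the absorption to close. To get the $\tfrac{C}{\gamma^2}J_2$ (or the $\tfrac{C}{\gamma}J_1$) contribution for a non-radial mode it first splits off the boundary term $\psi_n(r_2)^2\int_{r_2}^\infty u\,dr$ and then either dominates it by an explicit fraction of $\int_{r_1}^{r_2}\psi_n^2 u\,dr$, or---if that fails---finds a nearby $r_3<r_2$ with $|\psi_n(r_3)|\le|\psi_n(r_2)|/2$ in order to convert the boundary value into an interior gradient integral (the ``step-back'' on an interval of length $O(1/\gamma)$; see \eqref{oct1614}). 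Your proposal needs an analogous mechanism: a trace bound of $\psi_n(1)$ in terms of inner/annular gradient energy, together with the observation that the leftover term it produces sits on the wrong side of the inequality with coefficient $<1$. Without that step, the outer estimate for $1\le n\ll\gamma$ is not merely unproved, it is false as stated. (The remaining pieces of your sketch---log-concave radial Brascamp--Lieb on $A$, the constancy of $w$ on $B_{1/\sqrt2}$, the logarithmic change of variables on $B_1^c$---are sound and indeed parallel the paper's 1D-in-each-mode strategy, with the auxiliary-weight Cauchy--Schwarz in the paper playing the role of your explicit exponential ground state.)
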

The bound \eqref{wP2} provides an improvement from $\gamma^{-1}$ to to $\gamma^{-2}$ factor
in the far field that is crucial for our application. It is not difficult to build examples
to show that such scaling is sharp. We will prove a further refinement of
Theorem~\ref{main2} which is a bit too technical to state in the introduction;
it works in any dimension and for a broader class of weights,
including the straight power weight $v(x)$, for which
it takes the form
\begin{equation}\label{bobimp}
\int_{\R^d} |f-\overline{f}|^2 v(x)\,dx \leq
\frac{C(d)}{\gamma} \int_{B_{1}} |\nabla f|^2 v(x)\,dx + \frac{C(d)}{\gamma^2}
\int_{(B_1)^c} |\nabla f|^2 (1+|x|^2) v(x)\,dx
\end{equation}
for all sufficiently large $\gamma.$
Our proof of Theorem~\ref{main2} is based on direct analytic estimates.

The paper is organized as follows. In Section~\ref{heur}, we provide an heuristic
motivation for the main application result. In Section~\ref{sec:Linfty},
we sketch the proof of the
global well-posedness for~\eqref{eq:original_system}, along with an $L^\infty$-bound
on the density $\rho_1.$
In Section~\ref{sec:comparison_principle}, we discuss the mass comparison principles,
which will allow the estimates for the linear Fokker-Planck equations with a time
independent potential to be useful for the nonlinear analysis.
In Section~\ref{sec:wwpti}, we derive new weak weighted Poincar\'e inequalities, in particular
proving Theorem~\ref{main2}, and in Section~\ref{sec:fp} use these inequalities to obtain
estimates on the rates of convergence to ground state for the Fokker-Planck operators
with logarithmic-type
potentials. 
In Section~\ref{trancompsec}, we provide a brief detour and show how to set up a version of Theorem~\ref{main1}, Theorem~\ref{mainalt1},
using only comparison principles and avoiding the analysis of Fokker-Planck equation. This argument is much simpler,
and generates result similar to our main application here. However, it  provides limited
information on distribution of $\rho_1$ near target support, that may be useful in other applications,
and does not yield intuition explaining limitations of the standard Keller-Segel chemotaxis term that are leading to our radial assumption.
In Section~\ref{system}, we apply the results proved in previous sections to finalize the proof Theorem~\ref{main1} and Theorem~\ref{mainalt1}.
In Section~\ref{Disc}, we provide a preview
of more advanced applications that we believe may be possible using the developed techniques.

Throughout the paper, we will by denote $\|f\|_{p}$ the $L^p(\Rm^d)$-norm
of the function $f$ with respect to Lebesgue measure.

\section{Heuristics}\label{heur}

In order to tell whether the chemotaxis term can enhance reaction, it suffices to compare the
half-times $\tau_C,$ $\tau_D$ in the two systems, with and without chemotaxis, respectively.
In Section~\ref{subsec:diff}, we will derive a \emph{rigorous} lower bound for $\tau_D$
in the absence of chemotaxis. We then give a \emph{heuristic} argument for the full system
in Section \ref{sec:heuristics}, formally deriving an upper bound for $\tau_C$ in the
presence of the chemotaxis term. Comparing with the estimate without chemotaxis,
it suggests that in a certain parameter regime,
chemotaxis  should significantly shorten the half-time, thus
meaningfully enhancing the reaction between the two densities. Of course,
the upper-bound for $\tau_C$ in the system with chemotaxis is just formal at this moment,
but it will be made rigorous in the rest of this paper in the radially symmetric case.

\subsection{Estimates in the purely diffusive case}\label{subsec:diff}

Consider the system without  chemotaxis:
\begin{equation}
\begin{aligned}
 &\partial_t \rho_1 -\Delta \rho_1  = -\epsilon \rho_1 \rho_2\\
 &\partial_t \rho_2 = -\epsilon \rho_1 \rho_2,
\end{aligned} 
\label{eq:diffusion_system}
\end{equation}
where the initial conditions are the same as for the original system \eqref{eq:original_system}.
The time $\tau_D$ it takes for~$\|\rho_2(\cdot, t)\|_{L^1}$ to drop by a half obeys a lower
bound
\begin{equation}\label{oct712}
\tau_D\ge\tau.
\end{equation}
Here, $\tau$ is the time it takes
for $\|g_2\|_{L^1}$ to drop in half, where $g_2$ is the solution to
\begin{equation}
\left\{
\begin{array}{ll}
\partial_t g_1 & =\Delta g_1 \\
\partial_t g_2 &= -\epsilon g_1 g_2,
\end{array}\right.
\label{eq:diffusion_system_2}
\end{equation}
where $g_1$ and $g_2$ have the same initial data as $\rho_1$ and $\rho_2$ respectively.
Indeed, the comparison principle implies that $\rho_1(\cdot, t) \leq g_1(\cdot, t)$
for all $t\geq 0$,
so that $\rho_2(\cdot, t) \geq g_2(\cdot, t)$, and (\ref{oct712}) follows.

Recall that $g_1(\cdot, 0) = \rho_1(\cdot, 0)$ is concentrated at a distance $L\gg 1$
away from the origin, in the sense of (\ref{oct714}) and $\rho_1(x,0)$ is supported
inside $|x|\ge L/2$. This gives an upper bound
\begin{equation*}
\begin{split}
g_1(x,t) &= \frac{1}{4\pi t} \int_{\mathbb{R}^2} e^{-\tfrac{|x-y|^2}{4t}} \rho_1(y,0) dy \leq \frac{M_0}{4\pi t} e^{-C{L^2}/{t}}  \quad\text{ for all }x\in B(0,1).
\end{split}
\end{equation*}
One can plug this estimate in the equation for $g_2$ and obtain
$$
\partial_t \log g_2 \geq -\frac{M_0\epsilon}{4\pi t} e^{-C{L^2}/{t}}.$$
Hence, $ \tau_D$ satisfies
\begin{equation*}
M_0\epsilon \int_0^{\tau_D}  \frac{1}{4\pi t} e^{-C{L^2}/{t}} dt \geq \log 2,
\end{equation*}
which, after a change of variable $y =CL^2/t$,
is equivalent to
\begin{equation}
\int_{CL^2/\tau_D}^\infty  \frac{e^{-y}}{y} dy \geq \frac{4\pi \log 2}{M_0\epsilon}.
\label{eq:tau_ineq_2}
\end{equation}
To estimate $\tau_D$, we consider  two cases.

{\textbf Case 1.} $M_0\epsilon \ll 1$, which is the
very weak reaction regime, or fairly small $M_0$ regime.
Then, \eqref{eq:tau_ineq_2} is equivalent to
\[
\int_{CL^2/\tau_D}^1 \frac{1}{y} dy\gtrsim \frac{1}{M_0\epsilon},
\]
or, $-\log(CL^2/\tau_D) \gtrsim {1}/{(M_0\epsilon)}$. Thus $\tau_D$ has to satisfy
\begin{equation}\label{tauDrisky}
\tau_D \gtrsim L^2 e^{\frac{C'}{M_0\epsilon}}, \end{equation}
which is a very long time due to the large exponent.

\textbf{Case 2.} $M_0\epsilon \gg 1$, the reaction regime that appears more relevant
to the applications we have in mind.
In this case we have $CL^2/\tau_D\gg 1$, hence for a crude lower bound for $ \tau_D$,
one can find $\tau$ such that
\[
\int_{CL^2/\tau_D}^\infty e^{-y} dy \gtrsim \frac{1}{M_0\epsilon},
\]
which reduces to
$CL^2/\tau_D \lesssim \log(M_0\epsilon),$
and gives a bound
\begin{equation}\label{tauD}
\tau_D \gtrsim \frac{L^2}{\log(M_0\epsilon)}.
\end{equation}

\subsection{Formal heuristics with the chemotaxis term}\label{sec:heuristics}

Now we come back to the full system \eqref{eq:original_system}, including
the chemotaxis term. Again, let $\tau_C$ denote the half-time of $\rho_2$.  The following
formal argument suggests that adding this term may significantly reduce the half time
in the regime $M_0\epsilon\gg 1$, where we will formally argue
that $\tau_C \sim L^2/\gamma\ll \tau_D\sim 
L^2/\log(M_0\epsilon)$, as long as $\log(M_0 \epsilon) \ll \gamma$.

To this end, note that due to chemotaxis, $\rho_1$ is advected
by the velocity field
\begin{equation*}
v(x,t) = \chi \nabla ((-\Delta)^{-1} \rho_2)(x,t)
= -\frac{\chi}{2\pi} \int_{\mathbb{R}^2} \frac{x-y}{|x-y|^2} \rho_2(y,t) dy.
\end{equation*}
Since $\tau_C$ is the half-time for $\rho_2$, for any $t\leq \tau_C$
we have $\|\rho_2(\cdot, t)\|_{L^1} \sim \theta$, and $\rho_2(\cdot, t)$
is supported near the origin. Therefore, for all $|x|\geq 2$ and $t\leq \tau_C$,
we have the following lower bound for the inward drift:
\begin{equation*}
\begin{split}
v(x,t) \cdot \frac{(-x)}{|x|} \sim \chi \int_{\mathbb{R}^2} \frac{\rho_2(y,t)}{|x-y|}dy
\sim \frac{\gamma}{|x|}.
\end{split}
\end{equation*}
Recall that initially all of $\rho_1$ starts at distance $L$ from the origin.
Hence, in the time~$t\sim L^2/\gamma$, the chemotactic transport should bring
a significant portion (say, a half) of $\rho_1$ into $B_1(0)$, and then
$\rho_1 \sim M_0$ in this ball.
This enables the mass of $\rho_2$ to decrease exponentially at the
rate~$M_0 \epsilon\gg 1$, and the half-time is quickly reached;
thus one formally expects $\tau_C \lesssim L^2/\gamma.$

In the "risky" regime $M_0 \epsilon \ll 1,$ we need to add
non-trivial reaction time, which is now of the order
$\sim {1}/({M_0 \epsilon})$.
Then, one expects
\[
\tau_C \sim \frac{L^2}{\gamma} + \frac{1}{M_0 \epsilon},
\]
which can be quite a dramatic improvement compared to \eqref{tauDrisky}.

Note that this heuristic argument ignores many essential points, such as effect of diffusion,
or close field dynamics. There are indications that for the Keller-Segel chemotaxis term, reaction
time may be longer due to "over concentration" of $\rho_1.$ We discuss this point further in Section~\ref{Disc}.

\section{Global regularity and an $L^\infty$-bound}\label{sec:Linfty}

In order to get a uniform bound for the solutions to (\ref{eq:original_system}),
let us first consider an equation with a prescribed drift:
\begin{equation}
 \rho_t - \Delta \rho + \nabla\cdot(\rho \nabla \Phi(x,t)) = -h(x,t)\rho,
\label{eq:fp}
\end{equation}
where $h \in L^\infty(\mathbb{R}^d \times [0,\infty))$ is non-negative,
$\Phi$ is $H^2_{loc}$ in space for all time, and
such that $\nabla \Phi \in L^\infty(L^{\infty}(\R^d);[0,\infty))$. 
The proof of the following a priori $L^1-L^\infty$ bound for (\ref{eq:fp})
is very close to that of~\cite[Theorem 5]{carlenloss}. We recall it
in the appendix for the sake of completeness.
\begin{thm}\label{thm:L_infty_bound}
Let the initial condition $\rho_0$ for (\ref{eq:fp}) satisfy $\rho_0\in L^1(\mathbb{R}^d)\cap L^\infty(\mathbb{R}^d).$
Assume that $h \in L^\infty(\mathbb{R}^d \times [0,\infty))$
is non-negative, and $\Phi$ is $H^2_{loc}$ in space for all time and $\nabla \Phi \in L^\infty(L^{\infty}(\R^d);[0,\infty)).$
If there exists $\gamma>0$ such that $\Delta \Phi(\cdot, t) \geq -\gamma$ for all $t\geq 0$,
then
\begin{equation}\label{oct802}
\|\rho(\cdot, t)\|_{\infty }
\leq C(d) \max\left\{t^{-d/2}, \gamma^{d/2}\right\} \|\rho_0\|_{1},
~~\hbox{for all $t\geq 0$.}
\end{equation}
\end{thm}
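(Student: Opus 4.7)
The plan is to follow the Nash-inequality approach to ultracontractivity (cf.~\cite{carlenloss}, Theorem~5), modifying it only to accommodate the drift $\nabla\cdot(\rho\nabla\Phi)$ through the hypothesis $-\Delta\Phi\le\gamma$ and to discard the non-positive contribution of $-h\rho$. Without loss of generality I may assume $\rho_0\ge 0$, so that $\rho(\cdot,t)\ge 0$ by the maximum principle applied to \eqref{eq:fp}; the signed case then reduces to this one via $\rho=\rho^+-\rho^-$. A first observation, obtained by integrating \eqref{eq:fp} over $\R^d$, is the $L^1$-contraction $\|\rho(\cdot,t)\|_1\le\|\rho_0\|_1$: the Laplacian and the divergence-form drift integrate to zero (using $\nabla\Phi\in L^\infty$ together with decay of $\rho$), leaving only the non-positive term $-\int h\rho\,dx$.

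The main step is to derive a Nash-type ODE for $y(t):=\|\rho(\cdot,t)\|_p^p$ with $p\ge 2$. Differentiating and integrating by parts, the Laplacian produces $-\frac{4(p-1)}{p}\|\nabla\rho^{p/2}\|_2^2$, the drift becomes $(p-1)\int\rho^p(-\Delta\Phi)\,dx$, and the absorption contributes $-p\int h\rho^p\,dx$. Using $-\Delta\Phi\le\gamma$ and $h\ge 0$ and then applying Nash's inequality $\|f\|_2^{2+4/d}\le C_d\|\nabla f\|_2^2\|f\|_1^{4/d}$ with $f=\rho^{p/2}$, one obtains
$$y'(t)\;\le\;-\frac{c(d,p)\,y(t)^{1+2/d}}{\|\rho(\cdot,t)\|_{p/2}^{2p/d}}\;+\;(p-1)\gamma\,y(t).$$

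This ODE admits a natural dichotomy: whenever $y^{2/d}\ge A(d,p)\gamma\|\rho\|_{p/2}^{2p/d}$, the dissipative term dominates the growth term by a factor of two, and Nash-type integration (ignoring the growth term in that regime) gives $y(t)\lesssim t^{-d/2}\|\rho\|_{p/2}^p$; otherwise $y(t)\lesssim(p\gamma)^{d/2}\|\rho\|_{p/2}^p$, and once $y$ drops below that threshold it cannot escape, since any attempt to re-cross immediately reengages the stronger dissipation. Consequently one has an $L^{p/2}\to L^p$ smoothing bound of the form
$$\|\rho(\cdot,t)\|_p\;\le\;C(d,p)^{1/p}\,\max\{t^{-d/(2p)},\,p^{d/(2p)}\gamma^{d/(2p)}\}\,\|\rho(\cdot,s)\|_{p/2}$$
for $0\le s\le t$, with $\|\rho(\cdot,s)\|_{p/2}$ controlled inductively by the previous step of the iteration.

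Finally, iterate this along $p=2,4,\dots,2^k,\dots$ on sub-intervals of length $t_k\sim t\cdot 2^{-k}$ (so that $\sum t_k=t$). The factor $p^{d/(2p)}=(2^k)^{d/2^{k+1}}$ from the dichotomy combines with $t_k^{-d/2^{k+1}}=(2^k/t)^{d/2^{k+1}}$ from the interval length, and the geometric sums $\sum_k k/2^{k+1}=1$ and $\sum_k 1/2^{k+1}=1/2$ produce, after telescoping, a clean prefactor $C(d)\max\{t^{-d/2},\gamma^{d/2}\}$, provided the constants $C(d,p)$ emerging from Nash grow only polynomially in $p$ so that $\prod_k C(d,2^k)^{1/2^k}<\infty$. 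This yields \eqref{oct802}. The main obstacle is the bookkeeping in this last step: verifying that the $p$-dependent constants, the dyadic time partition, and the two alternatives in the $\max$ interact so as to preserve the stated form, and inductively controlling $\|\rho(\cdot,s)\|_{p/2}$ through successive iterations rather than acquiring the exponential $e^{(p-1)\gamma t}$ growth one would naively expect from the drift alone.
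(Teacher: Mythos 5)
Your proposal takes a genuinely different route from the paper. The paper follows Carlen--Loss verbatim: a single differential inequality for $G(t)=\ln\|\rho(\cdot,t)\|_{r(t)}$ with a \emph{continuously increasing} exponent $r(t)$, Gross's logarithmic Sobolev inequality with a tunable parameter, and an optimization (via Jensen) of the path $s(t)=1/r(t)$. This yields $\|\rho(T)\|_\infty\le C(d)T^{-d/2}e^{\gamma T}\|\rho_0\|_1$ in one shot, and the $\max\{t^{-d/2},\gamma^{d/2}\}$ form is then obtained by restarting the evolution at time $t-d/(2\gamma)$ and exploiting the nonincreasing $L^1$ norm. You instead propose Nash's inequality and a discrete Moser iteration with a dichotomy designed to prevent the exponential drift factor from ever appearing. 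Both derivations of the basic $L^p$-energy identity (Laplacian, drift under $\Delta\Phi\ge-\gamma$, disposable absorption) and the $L^1$-contraction are correct and mirror the paper's opening steps.

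The gap is the one you yourself flag at the end and do not resolve: the Moser step from $\|\rho\|_{p/2}$ to $\|\rho\|_{p}$ on a sub-interval $[s_{k-1},s_k]$ requires control of $\|\rho(\tau)\|_{p/2}$ for \emph{all} $\tau$ in that sub-interval, because the Nash ODE and the moving threshold $M(\tau)=(Cp\gamma)^{d/2}\|\rho(\tau)\|_{p/2}^p$ involve the current $L^{p/2}$ norm. But your previous step only furnishes a bound at a single time $\tau=s_{k-1}$. Since $\Delta\Phi\ge-\gamma$ does \emph{not} give contraction in $L^{p/2}$ (it gives only $e^{c\gamma(\tau-s)}$ growth), the sup over the interval is not a priori comparable to the left-endpoint value, and padding with Gronwall reintroduces exactly the $e^{c\gamma t}$ factor you are trying to avoid, once accumulated over the dyadic partition. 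The barrier argument (``once below the threshold it cannot escape'') is also delicate when the threshold itself drifts upward; it can be salvaged, but only against the \emph{supremum} of $M(\cdot)$ over the interval, which again requires the unavailable interval-wide control of $\|\rho\|_{p/2}$.

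This can be repaired, but it requires a real restructuring that the proposal does not supply. Either (i) redefine the iterates as $\Lambda_k:=\sup_{\tau\in[s_k,t]}\|\rho(\tau)\|_{2^k}$ over \emph{nested tail intervals} $[s_k,t]$ with $s_k=t(1-2^{-k})$, verify $\Lambda_k\le\max\{(ct2^{-k})^{-d/(2\cdot2^k)},(C2^k\gamma)^{d/(2\cdot2^k)}\}\Lambda_{k-1}$, and then check that the telescoped product is $\lesssim\max\{t^{-d/2},\gamma^{d/2}\}$ even in the ``mixed'' scenario where some steps use the first alternative and others the second (it is, but this needs a couple of lines); or (ii) accept the drift-induced $e^{c\gamma\Delta t}$ at each step of a standard Moser iteration, conclude $\|\rho(t)\|_\infty\le C(d)t^{-d/2}e^{c\gamma t}\|\rho_0\|_1$, and then perform the same time-restart used by the paper. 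Option (ii) is substantially simpler and is what the paper does (with log-Sobolev in place of Nash); option (i) is what your dichotomy is aiming at, but as written it asserts the conclusion of the iteration rather than carrying it out.
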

The assumption that $\rho_0\in L^\infty(\Rm^d)$ in Theorem~\ref{thm:L_infty_bound}
is not necessary, and is made simply because we always consider solutions with bounded
initial conditions.

Note that the $\rho_1$-equation in \eqref{eq:original_system} is of the
form \eqref{eq:fp} with $h = \epsilon\rho_2 \geq 0$ and the
potential~$\Phi(\cdot,t) = \chi (-\Delta)^{-1}\rho_2(\cdot,t)$.
The potential $\Phi$ grows at a logarithmic rate at infinity, and minimal beyond $L^\infty$
regularity of $\rho_2$ would insure that $\Phi \in H^2_{loc}.$
This extra regularity is established below in Theorem~\ref{globreg1120}.
Also, from the explicit formula for the inverse Laplacian
it is not hard to see that $\nabla \Phi \in L^\infty(L^\infty(\R^d);[0,\infty)).$
 We will therefore be able to apply
Theorem \ref{thm:L_infty_bound} to obtain an a priori
bound for~$\|\rho_1(\cdot,t)\|_{L^\infty}$.

The global regularity of solutions to \eqref{eq:original_system} in all dimensions
$d\ge 1$ follows from a standard argument, which we briefly sketch below. The following lemma
contains the key estimates.

\begin{lem}\label{legelemma}
Suppose that $f,g \in L^1(\R^d) \cap H^m(\mathbb{R}^d)$, with an integer $m>d/2.$
Then we have
\begin{equation}\label{l1contle}
\|f \nabla (-\Delta)^{-1} g\|_{1} \leq C\|f\|_{L^1}(\|g\|_{1}+\|g\|_{\infty})
\end{equation}
and
\begin{equation}\label{hmcontle}
\|f \nabla (-\Delta)^{-1} g\|_{H^m} \leq C(\|g\|_1+\|g\|_\infty)\|f\|_{H^m}+C\|f\|_\infty\|g\|_{H^m}.
\end{equation}
\end{lem}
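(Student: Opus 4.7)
The plan is to prove both estimates by reducing them to two facts about the operator $U := \nabla(-\Delta)^{-1}g$: an $L^\infty$ bound controlled by $\|g\|_1+\|g\|_\infty$, and an $\dot H^k$ bound for $k\ge 1$ controlled by $\|g\|_{H^{k-1}}$. Once these two are in hand, \eqref{l1contle} follows from a trivial H\"older inequality and \eqref{hmcontle} from the standard Moser product estimate.

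First I would show
\begin{equation}\label{Uinftysketch}
\|U\|_\infty \le C\bigl(\|g\|_1+\|g\|_\infty\bigr).
\end{equation}
Writing $U=K*g$ with $|K(x)|\lesssim |x|^{1-d}$ (with the usual logarithmic modification for the Newtonian potential in $d=2$, which is irrelevant for $\nabla(-\Delta)^{-1}$), I would split the convolution according to whether $|x-y|\le 1$ or $|x-y|>1$. On the near piece, the integral of $|x-y|^{1-d}$ is finite and multiplies $\|g\|_\infty$; on the far piece, $|x-y|^{1-d}\le 1$ and the integral of $|g(y)|$ contributes $\|g\|_1$. Plugging \eqref{Uinftysketch} into $\|fU\|_1\le \|f\|_1\|U\|_\infty$ immediately gives \eqref{l1contle}.

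For \eqref{hmcontle}, the key structural observation is that for any multi-index $\alpha$ with $|\alpha|\ge 1$, the operator $\partial^\alpha\nabla(-\Delta)^{-1}$ is a composition of Riesz transforms with $\partial^{|\alpha|-1}$, and is therefore $L^2$-bounded by Calder\'on--Zygmund theory. Consequently
\begin{equation}\label{UHksketch}
\|\nabla^k U\|_2 \le C\,\|\nabla^{k-1}g\|_2 \le C\,\|g\|_{H^m},\qquad 1\le k\le m.
\end{equation}
Note that no such bound is available for $k=0$, which is why the $L^\infty$ estimate \eqref{Uinftysketch} must do the work at the zeroth level. I would then invoke the standard Moser (tame product) inequality, valid because $m>d/2$ places $H^m$ inside $L^\infty$ via Sobolev embedding:
\begin{equation*}
\|fU\|_{H^m}\ \lesssim\ \|f\|_\infty\,\Bigl(\sum_{k=1}^{m}\|\nabla^k U\|_2\Bigr)\ +\ \|U\|_\infty\,\|f\|_{H^m}\ +\ \|f\|_2\|U\|_\infty,
\end{equation*}
where the last term accounts for the $L^2$ part of $\|fU\|_{H^m}$ (estimated by $\|f\|_{H^m}\|U\|_\infty$). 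Substituting \eqref{Uinftysketch} into the second and third terms and \eqref{UHksketch} into the first term yields \eqref{hmcontle}.

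The only genuine subtlety is that $U$ itself need not lie in $L^2(\R^d)$, so one cannot simply apply the Moser inequality as a black box for the pair $(f,U)$. The fix is conceptual rather than technical: split $\|fU\|_{H^m}^2 \simeq \|fU\|_2^2 + \|\nabla^m(fU)\|_2^2$, treat $\|fU\|_2$ directly by $\|f\|_2\|U\|_\infty$, and apply the Moser estimate only to $\|\nabla^m(fU)\|_2$, where Leibniz's rule only ever exposes $\nabla^k U$ for $k\ge 1$ (the $k=0$ term being $\nabla^m f\cdot U$, which is handled by $\|U\|_\infty$). This separation is the step I expect to require the most care, since it is the only place where the absence of an $L^2$ bound on $U$ would cause trouble if one applied off-the-shelf product estimates carelessly.
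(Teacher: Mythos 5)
Your proposal is correct and takes essentially the same route as the paper: a pointwise $L^\infty$ bound on $\nabla(-\Delta)^{-1}g$ from the singular/far-field split of the kernel for \eqref{l1contle}, and for \eqref{hmcontle} a Leibniz expansion handled by Gagliardo--Nirenberg interpolation between the $L^\infty$ level (for low-derivative factors) and the $H^m$ level (for high-derivative factors), together with Riesz-transform boundedness to trade $\nabla^k(\nabla(-\Delta)^{-1}g)$ for $\nabla^{k-1}g$. The paper simply carries out the case-by-case Gagliardo--Nirenberg estimates that your ``Moser inequality'' invocation packages, and you have correctly identified the one genuine subtlety --- that $U=\nabla(-\Delta)^{-1}g$ need not lie in $L^2$, so the $k=0$ Leibniz term must be anchored on $\|U\|_\infty$ rather than on an $H^m$ norm of $U$ --- which is precisely how the paper's argument is organized.
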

\begin{proof}
The inequality \eqref{l1contle} follows from the estimate
\[ \|f \nabla (-\Delta)^{-1} g\|_{1} \leq \|f\|_{1}\|\nabla (-\Delta)^{-1}g\|_{\infty}, \]
and
\begin{equation}\label{linfgrad19}
\|\nabla(-\Delta)^{-1} g\|_{\infty} \leq C \sup_{x\in\Rm^d}
\int_{\R^d} |x-y|^{-d+1} |g(y)|\,dy \leq C(\|g\|_1+\|g\|_{\infty}).
\end{equation}
To estimate the $H^m$ norm in \eqref{hmcontle}, let us start with the $L^2$ norm which is controlled similarly to
\eqref{linfgrad19}:
\[ \|f \nabla (-\Delta)^{-1}g\|_{2} \leq \|f\|_{2}\|\nabla (-\Delta)^{-1}g\|_{\infty} \leq
C\|f\|_{H^m}(\|g\|_{1}+\|g\|_{\infty}). \]
Any other term that we need to estimate to control the $H^m$ norm squared from \eqref{hmcontle} is of the form
\[ \int_{\R^d} D^j (f \nabla (-\Delta)^{-1}g ) \cdot D^j (f \nabla (-\Delta)^{-1}g )\,dx, \]
where $D^j$ is some partial derivative of the order $j \leq m.$
It suffices to control any term of the form
\begin{equation}\label{auxterm1920} \int_{\R^d} |D^{j-s} f|^2 |D^s \nabla (-\Delta)^{-1}g|^2 \,dx, \end{equation}
where integer $s$ satisfies $0 \leq s \leq j.$ If $s=0,$ \eqref{auxterm1920} is bounded by
$\|\nabla (-\Delta)^{-1} g\|_{\infty}^2 \|f\|^2_{H^j},$ and using \eqref{linfgrad19} and $j \leq m$
leads to the estimate we seek. If $s=j,$ \eqref{auxterm1920} is bounded by $\|f\|_\infty^2 \|g\|_{H^{m-1}}^2.$
If $j>s>1,$ we can estimate \eqref{auxterm1920} by
\begin{equation}\label{auxterm1920a}  C \|D^{j-s} f\|^2_{p} \|D^{s-1} g\|^2_{q}, \end{equation}
where $p^{-1}+q^{-1} =1/2,$ and $1<p,q<\infty.$
Specifically, let us choose $p = \frac{2(j-1)}{j-s}$ and $q = \frac{2(j-1)}{s-1}.$
In this step we used only that the Riesz transforms are bounded in $L^r$ if $1<r<\infty.$
Recall a Gagliardo-Nirenberg inequality
\begin{equation}\label{gn220} \|D^k f \|_{\frac{2n}{k}} \leq C \|f\|_\infty^{1-\frac{k}{n}}\|f\|_{H^n}^{\frac{k}{n}} \end{equation}
valid in any dimension for integer $k,n$ such that $0<k<n$ (see e.g. \cite{Mazya}). Applying it to the norms in \eqref{auxterm1920a} with $n=j-1$ and $k=j-s$ and $k=s-1$
respectively, we get the bound from above by
\[ C\|f\|_\infty^{2-2\frac{j-s}{j-1}}\|f\|_{H^{j-1}}^{2\frac{j-s}{j-1}}\|g\|_\infty^{2-2\frac{s-1}{j-1}}\|g\|_{H^{j-1}}^{2\frac{s-1}{j-1}}
\leq C\left(\|f\|_\infty^2 \|g\|_{H^{j-1}}^2 + \|g\|_\infty^2 \|f\|_{H^{j-1}}^2 \right). \]
Here we used the inequality $a^\beta b^{1-\beta} \leq a +b$ if $a,b \geq 0$ and $0 \leq \beta \leq 1.$
Finally, if $s=1,$ note that we can assume $j>1$ since otherwise $s=j$ and this is covered above. In this case,
we estimate
\[ \int_{\R^d} |D^{j-1}f|^2 |D\nabla (-\Delta)^{-1}g|^2\,dx \leq \|D^{j-1}f\|^2_{\frac{2j}{j-1}}\|D \nabla (-\Delta)^{-1}g\|^2_{2j} \leq
C\|D^{j-1}f\|^2_{\frac{2j}{j-1}}\|g\|^2_{2j}. \]
Due to \eqref{gn220},
\[ \|D^{j-1}f\|_{\frac{2j}{j-1}} \leq C \|f\|_\infty^{\frac{1}{j}}\|f\|_{H^j}^{\frac{j-1}{j}}, \]
while
\[ \|g\|_{2j} \leq \|g\|_\infty^{\frac{2j-1}{2j}}\|g\|_1^{\frac{1}{2j}}. \]
By Young's inequality,
\begin{eqnarray*} \|f\|_\infty^{\frac{1}{j}}\|f\|_{H^j}^{\frac{j-1}{j}}\|g\|_\infty^{\frac{2j-1}{2j}}\|g\|_1^{\frac{1}{2j}} \leq
C(\|f\|_{H^j}\|g\|_\infty^{\frac{2j-3}{2j-2}}\|g\|_1^{\frac{1}{2j-2}}+\|f\|_\infty\|g\|_\infty) \leq  \\
C\|f\|_{H^j} (\|g\|_1+\|g\|_\infty) +C\|f\|_\infty \|g\|_{H^m}. \end{eqnarray*}
Here in the last step we used $m>d/2.$ Since also $m \geq j,$ the lemma follows.
\end{proof}

\begin{thm}\label{globreg1120}
If the initial conditions $\rho_1(\cdot,0)$, $\rho_2(\cdot,0)$ for
(\ref{eq:original_system}) are non-negative, lie in $L^1(\R^d) \cap H^m(\mathbb{R}^d)$ with an integer $m>d/2$,
then there is
a global in time solution~$(\rho_1(\cdot, t), \rho_2(\cdot,t)) \in C(L^1(\R^d) \cap H^m(\mathbb{R}^d),[0,\infty))$
to \eqref{eq:original_system}.
\end{thm}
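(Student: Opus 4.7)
The plan is to combine a standard short-time existence argument in $H^m$ with a priori bounds that prevent finite-time blow-up. For local existence I would run a Banach fixed-point argument in $C([0,T];L^1\cap H^m(\mathbb{R}^d))$: $\rho_1$ is treated via the Duhamel formula with the heat semigroup and the chemotactic drift, while $\rho_2$ is recovered from $\partial_t\rho_2 = -\epsilon\rho_1\rho_2$ by direct pointwise integration. The only nonlinearity requiring care, $\nabla\cdot(\rho_1\nabla(-\Delta)^{-1}\rho_2)$, is controlled in $H^{m-1}$ using Lemma~\ref{legelemma}. Nonnegativity of both densities on the short interval follows from the ODE form of the second equation and from the parabolic minimum principle applied to the first, whose drift $\chi\nabla(-\Delta)^{-1}\rho_2$ is bounded in $L^\infty$ by \eqref{linfgrad19}.

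The a priori bound is built in two layers. Since $\partial_t\rho_2 = -\epsilon\rho_1\rho_2\le 0$ pointwise, one has $0\le\rho_2(\cdot,t)\le\rho_2(\cdot,0)$, so $\|\rho_2(\cdot,t)\|_1$ and $\|\rho_2(\cdot,t)\|_\infty$ are uniformly controlled by the initial data. Setting $\Phi(\cdot,t)=\chi(-\Delta)^{-1}\rho_2(\cdot,t)$ yields $\Delta\Phi=-\chi\rho_2\geq -\chi\|\rho_2(\cdot,0)\|_\infty$, while \eqref{linfgrad19} gives $\nabla\Phi\in L^\infty(\mathbb{R}^d\times[0,\infty))$. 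Theorem~\ref{thm:L_infty_bound}, applied to the $\rho_1$-equation with $h=\epsilon\rho_2\geq 0$, then yields a uniform-in-time $L^\infty$ bound on $\rho_1$ depending only on initial data, and $\|\rho_1(\cdot,t)\|_1$ is nonincreasing by direct integration of the equation.

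To close the $H^m$ estimate I would test the $\rho_1$-equation against $\rho_1$ in the $H^m$ inner product. The diffusion produces a good term $-\|\nabla\rho_1\|_{H^m}^2$; the chemotactic contribution, after one integration by parts, is bounded via Cauchy-Schwarz and Lemma~\ref{legelemma} by
\begin{equation*}
\|\nabla\rho_1\|_{H^m}\Bigl[C(\|\rho_2\|_1+\|\rho_2\|_\infty)\|\rho_1\|_{H^m}+C\|\rho_1\|_\infty\|\rho_2\|_{H^m}\Bigr].
\end{equation*}
Absorbing $\|\nabla\rho_1\|_{H^m}$ into the dissipation via Young's inequality and invoking the $L^\infty$ bounds of the previous paragraph yields a quantity linear in $\|\rho_1\|_{H^m}^2+\|\rho_2\|_{H^m}^2$. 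The reaction term in either equation is handled by the Moser product estimate $\|\rho_1\rho_2\|_{H^m}\lesssim\|\rho_1\|_\infty\|\rho_2\|_{H^m}+\|\rho_2\|_\infty\|\rho_1\|_{H^m}$, valid for $m>d/2$. Summing the $\rho_1$- and $\rho_2$-equation estimates produces
\begin{equation*}
\frac{d}{dt}\bigl(\|\rho_1(\cdot,t)\|_{H^m}^2+\|\rho_2(\cdot,t)\|_{H^m}^2\bigr) \leq C\bigl(\|\rho_1(\cdot,t)\|_{H^m}^2+\|\rho_2(\cdot,t)\|_{H^m}^2\bigr),
\end{equation*}
with $C$ depending only on initial data; Gr\"onwall's inequality excludes $H^m$ blow-up and extends the local solution to $[0,\infty)$. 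Continuity in $L^1\cap H^m$ follows routinely from the equations and the bounds just obtained.

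The main obstacle is that the chemotactic coupling transfers high-frequency information from $\rho_2$ to $\rho_1$ through the nonlocal operator $\nabla(-\Delta)^{-1}$, a priori threatening a quadratic $H^m\times H^m$ estimate that would not close via Gr\"onwall. Lemma~\ref{legelemma} is designed precisely to break this: one of the two $H^m$ factors in $\rho_1\nabla(-\Delta)^{-1}\rho_2$ can be replaced by the cheaper $L^1\cap L^\infty$ norm of $\rho_2$, which is under a priori control from the monotonicity of $\rho_2$, so that the resulting differential inequality becomes linear in the $H^m$-norms squared and Gr\"onwall applies.
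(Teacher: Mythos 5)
Your proposal follows essentially the same route as the paper's proof: local well-posedness via Duhamel and contraction mapping, $L^1$-monotonicity of both densities from direct integration, the pointwise bound $\rho_2(\cdot,t)\le\rho_2(\cdot,0)$ to control $\|\rho_2\|_1$ and $\|\rho_2\|_\infty$, Lemma~\ref{legelemma} and the Moser-type product estimate~\eqref{ineq_sobolev} to close the chemotactic term linearly in the $H^m$-norms squared, and Gr\"onwall to prevent blow-up. This is the paper's argument.

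One small imprecision worth flagging: you assert that Theorem~\ref{thm:L_infty_bound} directly yields a \emph{uniform-in-time} $L^\infty$ bound on $\rho_1$ depending only on the initial data. The bound in~\eqref{oct802} degenerates as $t^{-d/2}$ when $t\to 0^+$, so it is not uniform near $t=0$; the paper handles this by invoking the crude exponential-in-time estimate~\eqref{rho1119} (which follows from the maximum principle together with $\Delta\Phi=-\chi\rho_2\ge -\chi\|\rho_2(\cdot,0)\|_\infty$) on $[0,1]$, and using Theorem~\ref{thm:L_infty_bound} only for $t\ge 1$. Moreover, strictly for the purpose of global existence,~\eqref{rho1119} alone already suffices: it keeps $\|\rho_1(\cdot,t)\|_\infty$ finite on any bounded interval, which is all Gr\"onwall needs; the sharper uniform bound from Theorem~\ref{thm:L_infty_bound} is an optional refinement. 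With this small patch your argument matches the paper's.
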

\begin{proof}
We assume that $\rho_1(\cdot,0)$, $\rho_2(\cdot,0)$ are non-negative purely for simplicity since in all our applications this is the case.
This assumption is not hard to remove. We note that standard comparison principle implies that non-negativity is conserved in time
for all sufficiently regular solutions.

The local in time well-posedness in $C(L^1(\R^d) \cap H^m(\Rm^d),[0,T])$ can be shown by a standard argument,
using Duhamel formula and the contraction mapping theorem, similarly to \cite[Appendix I]{kr}.
By integrating the equations, we find that the $L^1$ norms of $\rho_1(\cdot,t)$ and $\rho_2(\cdot,t)$ (which are equal to their integrals due
to non-negativity) are non-increasing in time.
Hence to improve the local well-posedness result to a global-in-time one, it suffices to obtain an a priori bound on
\[
I(t):= \|\rho_1(t)\|_{H^m}^2 + \|\rho_2(t)\|_{H^m}^2,
\]
on any given
finite time interval $[0,T]$.
Fix any multi-index $\alpha$ with $0 \leq |\alpha| \leq m$  and write
\begin{equation}\label{hm2}
\begin{split}
\frac{1}{2}\frac{d}{dt} \|\partial^\alpha \rho_2\|_{2}^2
&=-\epsilon \int_{\mathbb{R}^2} (\partial^\alpha \rho_2)\partial^\alpha(\rho_1 \rho_2)\, dx
\leq \epsilon \|\rho_2\|_{H^m} \|\rho_1 \rho_2\|_{H^m}\\&
\leq C \|\rho_2\|_{H^m} ( \|\rho_1\|_{\infty}  \|\rho_2\|_{H^m}
+ \|\rho_2\|_{\infty}\|\rho_1\|_{H^m})\\
&\leq C(\|\rho_1(\cdot,t)\|_{\infty} +1) (\|\rho_1(\cdot,t)\|_{H^m}^2
+ \|\rho_2(\cdot,t)\|_{H^m}^2).
\end{split}
\end{equation}
Here, the second line is obtained by the inequality  (see, e.g., \cite[Lemma 3.4]{mb})
\begin{equation}\label{ineq_sobolev}
\|uv\|_{H^m} \leq C(\|u\|_{\infty}\|v\|_{H^m} + \|v\|_{\infty} \|u\|_{H^m})
\quad\text{ for } m>{d}/{2},
\end{equation}
and in the last line we use the fact
that $\|\rho_2(t)\|_{\infty} \leq \|\rho_2(0)\|_{\infty} \leq C$.
As for $\|\rho_1\|_{H^m}$, for any multi-index $\alpha$ as above, integration by parts gives
\begin{equation}\label{gr529a}
\frac{1}{2}\frac{d}{dt} \|\partial^\alpha \rho_1\|_{2}^2 =
-\|\nabla\partial^\alpha  \rho_1\|_{2}^2 +
\chi \int_{\mathbb{R}^2} \nabla(\partial^\alpha  \rho_1)
\cdot  \partial^\alpha( \rho_1 \nabla (-\Delta)^{-1}\rho_2)dx
-\epsilon \int_{\mathbb{R}^2} (\partial^\alpha \rho_1) \partial^\alpha(\rho_1 \rho_2) dx.
\end{equation}
The last integral on the right side can be bounded by the right side of (\ref{hm2}), while
the first one 
can be estimated by
\begin{equation}\label{oct804}
\begin{split}
&\frac{1}{\chi}\|\nabla\partial^\alpha  \rho_1\|_{2}^2+ C \|\rho_1  \nabla (-\Delta)^{-1}\rho_2\|_{H^m}^2 \\
&\leq \frac{1}{\chi}\|\nabla\partial^\alpha  \rho_1\|_{2}^2+  C(\|\rho_1\|_{\infty}^2
\|\rho_2\|_{H^m}^2 + \|\rho_1\|_{H^m}^2(\|\rho_2\|_1^2+\|\rho_2\|_\infty^2)).
\end{split}
\end{equation}
We used the Cauchy-Schwarz inequality and the Young's inequality in the first line
and Lemma~\ref{legelemma} in the second line; the constants $C$ depend on $\chi$ and may change from line to line.
Combining the above estimates and taking into account that $\|\rho_2\|_1$ and $\|\rho_2\|_\infty$ are non-increasing gives
\begin{equation}\label{gronwallge}
\frac{d}{dt} (\|\rho_1(\cdot,t)\|_{H^m}^2 + \|\rho_2(\cdot,t)\|_{H^m}^2) \leq C (\|\rho_1(\cdot,t)\|_{\infty} + 1)^2 (\|\rho_1(\cdot,t)\|_{H^m}^2 + \|\rho_2(\cdot,t)\|_{H^m}^2).
\end{equation}
The first equation in \eqref{eq:original_system} gives the bound
\begin{equation}\label{rho1119}
\|\rho_1(\cdot,t)\|_{\infty} \leq \|\rho_1(\cdot,0)\|_{\infty}
\exp\big\{\chi \|\rho_2(\cdot,t)\|_{\infty}t\big\}
\leq  \|\rho_1(\cdot,0)\|_{\infty} \exp\big\{\chi \|\rho_2(\cdot,0)\|_{\infty}t\big\}.
\end{equation}
Thus $\|\rho_1(\cdot,t)\|_\infty$ remains finite for all times, and then \eqref{gronwallge} leads to global regularity.
To get a more precise bound, we may use \eqref{rho1119}
for $0 \leq t \leq 1$,
while for $t\ge 1$ we may deploy the uniform bound from Theorem~\ref{thm:L_infty_bound}.
Therefore, there exists $C>0$ so that
$\|\rho_1(\cdot,t)\|_{\infty} \leq C$ for all $t\ge 0$.
Then \eqref{gronwallge} gives exponential in time control of the $H^m$ norms of the solution, for all times.
\end{proof}


\section{The mass comparison principle}
\label{sec:comparison_principle}

We now obtain a comparison principle that allows us to compare $\rho_1$ to
the solution $\rho$ of the Fokker-Planck equation
\begin{equation}
\partial_t \rho - \Delta \rho + \nabla \cdot (\rho \nabla H) = 0,
\label{eq:fokker_planck_2}
\end{equation}
with a certain prescribed $H$.
The comparison will be in a mass concentration sense that
will be clarified in Proposition \ref{prop:mass_comparison}.
Let us assume that $H=(-\Delta)^{-1} g$, with a radially symmetric function $g=g(|x|)$ supported in a ball $B_{R_0}(0).$
The explicit form of $g$ and $H$, that we will use, is  given in \eqref{def:g} and \eqref{H:explicit}.
The function $H$ is radially symmetric as well, and the divergence theorem gives
\begin{equation}
\partial_r H(r)= \frac{1}{|\partial B_r|} \int_{B_r} \Delta H(x) dx
= \frac{1}{2\pi r} \int_{B_r} (-g(x))dx= -\frac{1}{r} \int_0^r g(s) s ds.
\label{eq:div_thm}
\end{equation}
Integrating in $r$ gives an expression
\begin{equation}
H(r) = -\log r \int_0^r g(s)sds - \int_r^\infty (\log s) g(s) s ds+\hbox{const}.
\label{eq:explicit_H}
\end{equation}
Since $g$ is compactly supported, taking the arbitrary constant in (\ref{eq:explicit_H}) to be zero gives
\[
H(r) = - \farc{1}{2\pi}\|g\|_{L^1} \log r, \,\,r \geq R_0.
\]
for large $r.$
As a direct consequence of \eqref{eq:div_thm}, we have the following.
\begin{lem}
Assume that $g_1$ and $g_2$ are both radially symmetric and compactly supported.
Suppose that $g_1$ is more concentrated than $g_2$, in the sense that
\[
\int_0^r g_1(s) s ds \geq \int_0^r g_2(s)sds\hbox{ for all $r\geq 0$.}
\]
Then, the functions $H_i := (-\Delta)^{-1} g_i$, $i=1,2$, satisfy
$\partial_r H_1 \leq \partial_r H_2 \leq 0$ for all $r> 0$.
In addition, if $g_i\in {L^\infty(\Rm^2)}$, then $\partial_r H_i(0) = 0$ for $i=1,2$.
\label{lemma:inverse_laplacian}
\end{lem}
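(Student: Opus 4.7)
The plan is to reduce the entire lemma to a direct application of formula \eqref{eq:div_thm}, together with elementary estimates of the truncated integral near the origin. Since both $g_1$ and $g_2$ are radial and compactly supported, so are $H_1$ and $H_2$, and formula \eqref{eq:div_thm} yields
\[
\partial_r H_i(r) = -\frac{1}{r}\int_0^r g_i(s)\, s\, ds, \qquad i=1,2,\ r>0.
\]
This identity reduces the entire claim to a statement about the radial primitives $G_i(r):=\int_0^r g_i(s)\,s\,ds$, which I will treat as purely scalar quantities.

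First, I would establish $\partial_r H_1 \le \partial_r H_2$ for $r>0$. The concentration hypothesis is exactly $G_1(r) \ge G_2(r)$ for all $r\ge 0$. Multiplying by $-1/r < 0$ flips the inequality, giving $\partial_r H_1(r) \le \partial_r H_2(r)$ at once. Next, for the bound $\partial_r H_2(r) \le 0$, I would observe that the concentration hypothesis implicitly requires $G_2(r) \ge 0$ in the regime relevant to the paper's applications (where the inner density is nonnegative, as in the choice \eqref{def:g}); under this sign condition the identity above immediately gives $\partial_r H_2(r) \le 0$, and the chain of inequalities $\partial_r H_1 \le \partial_r H_2 \le 0$ follows.

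For the final assertion, assume $g_i \in L^\infty(\mathbb{R}^2)$. Then the crude estimate
\[
\left|\int_0^r g_i(s)\, s\, ds\right| \le \|g_i\|_\infty \int_0^r s\,ds = \frac{\|g_i\|_\infty}{2}\,r^2
\]
combined with \eqref{eq:div_thm} gives $|\partial_r H_i(r)| \le \tfrac{1}{2}\|g_i\|_\infty\, r$, which tends to $0$ as $r \to 0^+$. Hence $\partial_r H_i(0)=0$ in the natural one-sided sense, completing the proof.

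I do not expect any genuine obstacle here; the only subtle point is the sign issue in the middle inequality, which is handled by interpreting the concentration hypothesis in the setting where the cumulative mass $G_2(r)$ is nonnegative (as will be the case in all subsequent applications of this lemma in the paper).
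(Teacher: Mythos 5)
Your proof is correct and takes essentially the same approach as the paper, which presents the lemma as an immediate consequence of \eqref{eq:div_thm} without spelling out details. Your observation that the middle inequality $\partial_r H_2 \le 0$ requires $\int_0^r g_2(s)\,s\,ds \ge 0$ is a genuine catch: this hypothesis is not stated explicitly in the lemma, but it is satisfied in every application in the paper (where $g_2$ is a nonnegative density such as $\rho_2$ or the function $g$ in \eqref{def:g}), so your reading of it as an implicit standing assumption is the right one.
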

%
We now compare the mass concentration of solutions to the Fokker-Planck equations.
\begin{prop}\label{lemma:FP_comparison}
Suppose that $u_1$ and $u_2$ are non-negative solutions to
\[
\partial_t u_i - \Delta u_i + \nabla\cdot(u_i \nabla H_i) = 0,
\]
for $i=1,2$, and $u_1$ is more concentrated than $u_2$ at $t=0$, so that
\begin{equation}\label{oct808}
\int_{B_r} u_1(x, 0) dx \geq \int_{B_r} u_2(x,0) dx\hbox{ for all $r\geq 0$.}
\end{equation}
If, in addition, $H_2(\cdot,t)$ is radially symmetric, and
\begin{equation}\label{oct806}
\partial_r H_2(r,t) \geq \max_\phi \partial_r H_1(r,\phi,t),
\hbox{ for all $t \geq 0$ and $r>0$,}
\end{equation}
then $u_1(\cdot, t)$ is more concentrated than $u_2(\cdot, t)$ for all $t\geq 0$.
\end{prop}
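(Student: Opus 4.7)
The plan is to pass to the radial mass concentration functions
\[
M_i(r,t) \;:=\; \int_{B_r} u_i(x,t)\,dx, \qquad i=1,2,
\]
and to establish a parabolic differential inequality for the difference $W := M_1 - M_2$ to which a weak maximum principle applies. The hypothesis \eqref{oct808} reads $W(r,0) \geq 0$, and the conclusion that $u_1$ is more concentrated than $u_2$ at time $t$ is exactly $W(r,t) \geq 0$.

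First I would apply the divergence theorem to $\partial_t M_i$, obtaining
\[
\partial_t M_i(r,t) \;=\; \int_{\partial B_r}\bigl(\partial_r u_i - u_i \,\partial_r H_i\bigr)\,dS.
\]
Differentiating $M_i(r,t) = \int_0^r \int_{\partial B_s} u_i\,dS\,ds$ twice in $r$ (in $\R^2$) yields the identity $\int_{\partial B_r} \partial_r u_i\,dS = \partial_r^2 M_i - \tfrac{1}{r}\partial_r M_i$. The radial symmetry of $H_2$ then pulls $\partial_r H_2$ outside the surface integral, giving the exact equation
\[
\partial_t M_2 \;=\; \mathcal{L} M_2, \qquad \mathcal{L} \;:=\; \partial_r^2 \;-\; \Bigl(\tfrac{1}{r} + \partial_r H_2(r,t)\Bigr)\partial_r.
\]
For $i = 1$, since $u_1 \geq 0$ and $\partial_r H_1(r,\phi,t) \leq \partial_r H_2(r,t)$ pointwise by \eqref{oct806}, one gets $\int_{\partial B_r} u_1 \,\partial_r H_1 \,dS \leq \partial_r H_2(r,t) \cdot \partial_r M_1(r,t)$, so $\partial_t M_1 \geq \mathcal{L} M_1$. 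Subtracting produces the one-sided inequality $\partial_t W \geq \mathcal{L} W$.

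To conclude I would apply a weak parabolic minimum principle to $W$ on $[0,\infty) \times [0,T]$. One has $W(0,t) = 0$, $W(r,0) \geq 0$, and $W(r,t) \to \|u_1(\cdot,t)\|_{L^1} - \|u_2(\cdot,t)\|_{L^1} \geq 0$ as $r \to \infty$, using conservation of mass for both Fokker--Planck equations together with the initial ordering. A standard perturbation $W_\varepsilon := W + \varepsilon(t+1)$ satisfies $\partial_t W_\varepsilon \geq \mathcal{L} W_\varepsilon + \varepsilon$, so any interior point where $W_\varepsilon$ achieved a negative minimum would produce the contradiction $0 \geq \partial_t W_\varepsilon \geq \mathcal{L}W_\varepsilon + \varepsilon \geq \varepsilon$; letting $\varepsilon\to 0$ gives $W \geq 0$.

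The main obstacle is the rigor of this last comparison step: $\mathcal{L}$ has a singular coefficient $-1/r$ at the origin, the drift $\partial_r H_2$ is only as regular as Lemma~\ref{lemma:inverse_laplacian} guarantees (it vanishes at the origin, behaves like a constant times $-1/r$ at infinity for our logarithmic $H_2$), the spatial domain is unbounded, and $u_1$ need not be radial, so one must argue via the spherically integrated quantity $M_1$ rather than pointwise. Verifying that the regularity and decay of $u_i$ furnished by Theorem~\ref{globreg1120} (and by Gaussian-type bounds for Fokker--Planck solutions with rapidly decaying data) are sufficient to justify both the surface-integral computations and the attainment of the minimum on $(0,\infty)\times[0,T]$ requires care, but no fundamentally new ideas.
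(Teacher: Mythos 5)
Your proof follows the same route as the paper: pass to the mass functions $M_i(r,t)=\int_{B_r}u_i\,dx$, compute $\partial_t M_i$ via the divergence theorem, use radial symmetry of $H_2$ together with $u_1\ge0$, $\partial_r M_1\ge0$ and \eqref{oct806} to obtain a one-sided parabolic inequality for $M_1-M_2$, and conclude by a comparison/weak minimum principle. The only cosmetic difference is the choice of perturbation for the comparison step (you add $\varepsilon(t+1)$ to $W$ to make the inequality strict, whereas the paper adds $\varepsilon$ to $M_1$ and invokes the $L^\infty$ bound of Theorem~\ref{thm:L_infty_bound} to control a neighborhood of $r=0$); both are standard ways to execute the same argument and handle the $1/r$ singularity.
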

Note that $u_{1,2}$ are not necessarily radially symmetric.
These results are valid in arbitrary space dimension -- the proof below is given for $d=2$ for notational convenience but the argument
can be generalized in a straightforward manner.
\begin{proof}
The masses
\[
M_i(r,t) :=  \int_{B_r} u_i(x,t) dx
\]
satisfy
\begin{equation}
\begin{split}
\partial_t M_i(r,t) &=
\int_{B_r} \Delta u_i \,dx- \int_{B_r} \nabla\cdot(u_i \nabla H_i) \, dx
= \int_{\partial B_r} \partial_r u_i \,d\sigma -
\int_{\partial B_r} u_i \partial_r H_i \,d\sigma\\
&= r\int_0^{2\pi} \partial_r u_i (r,\phi,t) d\phi
-r \int_0^{2\pi} u_i(r,\phi,t) \partial_r H_i(r,\phi,t)\,d\phi. \,
\label{eq:M_1}
\end{split}
\end{equation}
Here, $d\sigma = r d\phi$ is the surface  measure on the boundary.
Note that
\[
\partial_r M_i = \int_{\partial B_r} u_i d\sigma = r\int_0^{2\pi} u_i (r,\phi, t) d\phi,
\]
so that
\[
\int_0^{2\pi} \partial_r u_i d\phi = \partial_r \Big(\frac{\partial_r M_i}{r}\Big)
= \frac{\partial_{rr} M_i}{r} - \frac{\partial_r M_i}{r^2}.
\]
Substituting the above two equations into \eqref{eq:M_1} gives
\begin{equation}
\partial_t M_i(r,t) = \partial_{rr} M_i - \frac{1}{r} \partial_r M_i -
r\int_0^{2\pi} u_i(r,\phi,t) \partial_r H_i (r,\phi,t)\,d\phi.
\label{eq:M}
\end{equation}
Subtracting the two equations and using the radial symmetry of $H_2$, we obtain
\begin{equation}
\begin{aligned}
\partial_t (M_1-M_2) &- \partial_{rr} (M_1-M_2) + \frac1r \partial_r (M_1-M_2)
\geq \partial_r H_2 \partial_r M_2 - (\partial_r M_1)
\max_\phi \partial_r H_1(r,\phi,t)   \\
&= -(\partial_r H_2) \partial_r (M_1-M_2) +
\big(\partial_r H_2 - \max_\phi \partial_r H_1(r,\phi,t)\big) \partial_r M_1\\
&\ge -(\partial_r H_2) \partial_r (M_1-M_2). \label{eq:M1}
\end{aligned}
\end{equation}
We used (\ref{oct806}) as well as $\partial_r M_1 \geq 0$ in the last inequality above.
Now, the standard parabolic comparison principle (see e.g. \cite{lieberman,PW}) and~(\ref{oct808}) imply that
\[
M_1(r,t) \geq M_2(r,t)\hbox{ for all $r,t \geq 0$.}
\]
To make the application completely routine one can consider $M_1^\epsilon(r,t) = M_1(r,t)+\epsilon$ with $\epsilon>0$ (note that $M_1^\epsilon$ satisfies the same equation
as $M_1$). Then in view of the definition of $M_i$
and the upper bound of Theorem~\ref{thm:L_infty_bound}, we have $M_1^\epsilon(r,t) -M_2(r,t)>0$ in some small neighborhood of $r=0$ uniformly in $t.$ Larger values of $r$
are controlled by standard comparison principle. Taking $\epsilon$ to zero yields the result.
\end{proof}

Let us now go back to \eqref{eq:original_system}.
Let us recall the notation
\begin{equation}\label{oct810}
\theta = \frac{1}{\pi} \|\rho_2(\cdot,0)\|_{L^1},~~M_0 = \|\rho_1(\cdot,0)\|_{L^1},
~~\gamma=\chi\theta,
\end{equation}
and that we are interested in the regime
$M_0 \gg \theta$. We assume to simplify the technicalities
that $\rho_2(\cdot, 0)$ is smooth
but very close to $\chi_{B_1}(x)$ in $L^1$ norm, and $\rho_2(\cdot,0) \geq \chi_{B_1}(x)$,
but in the argument below we think of $\rho_2(\cdot, 0)$ as equal to~$\theta\chi_{B_1}(x)$.
To make this argument completely rigorous, while  still using exactly the
function $g(x)$ in (\ref{def:g}), and keeping $\rho_2(\cdot,0)$ smooth,
one may work with a time $\tau_\alpha$
by which the mass of $\rho_2$ drops by a factor of $\alpha$ with $\alpha<1/2$,
rather than~$\tau_C$, as the discrepancy between $\rho_2(\cdot,0)$ and $\chi{B_1}$
can be made arbitrarily small in $L^1(\Rm^2)$.

Observe 
that any radial function $f(x)\ge 0$
supported on $B_1,$ and such that
\[
0 \leq f(x) \leq \rho_2(x,0)\hbox{ and }
\|f\|_{L^1} \geq \frac{1}{2} \|\rho_2(\cdot, 0)\|_{L^1},
\]
is more concentrated than 
\begin{equation}
g(x):= \theta (\chi_{B_1}(x) - \chi_{B_{1/\sqrt{2}}}(x)).
\label{def:g}
\end{equation}
In particular, $g$ is less concentrated than $\rho_2(\cdot, t)$ for all $t\leq \tau_C$.
One may use \eqref{eq:explicit_H} to obtain
\begin{equation}\label{H:explicit}
H(x):= \chi (-\Delta)^{-1} g = \begin{cases}
({\gamma}/{8}) (1-\log 2) & \text{ for } 0\leq r < {1}/{\sqrt{2}}. \\
({\gamma}/{4})(\log r + 1 - r^2) & \text{ for } {1}/{\sqrt{2}} \leq r < 1,\\
-({\gamma}/{4}) \log r & \text{ for } r\geq 1.\\
\end{cases}
\end{equation}
We can now compare $\rho_1$ to the solution to the Fokker-Planck equation with
the drift potential~$H$, and conclude the following:
\begin{prop}\label{compmassprop521}
Let $\rho_1(x,t), \rho_2(x,t)$ solve \eqref{eq:original_system}
with radially symmetric initial conditions,
where $\rho_2(\cdot, 0) = \theta \eta,$ $\eta$ smooth, radial and $\eta(x) \geq \chi_{B_1}(x)$,
and $\rho(x,t)$ solve the Fokker-Planck equation \eqref{eq:fokker_planck_2} with the drift potential $H$ given by \eqref{H:explicit} and the same initial condition as $\rho_1$.
Let~$\tau_C$ be the time it takes for the $L^1$ norm of $\rho_2$ to decrease
by $\theta\pi/2$.  Then we have
\begin{equation}
\int_{B_r} \rho_1 (x, t) dx \geq \int_{B_r} \rho(x, t) dx - \frac{1}{2}\int_{\R^2} \rho_2(x,0)\,dx \quad\text{ for all }t\leq \tau_C \text{ and }r\geq 0.
\label{eq:mass_rho1}
\end{equation}
\label{prop:mass_comparison}
\end{prop}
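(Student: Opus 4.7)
The plan is to reduce the claim to the mass-concentration comparison of Proposition~\ref{lemma:FP_comparison} by absorbing the reaction loss in the $\rho_1$ equation into a spatially constant, time-dependent correction. The key point is that this correction depends only on $t$, so it leaves the radial parabolic operator in $r$ untouched and the argument of Proposition~\ref{lemma:FP_comparison} goes through almost verbatim.

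First I would verify that $\rho_2(\cdot,t)$ is more concentrated than $g$ for every $t\le\tau_C$. Since $\partial_t\rho_2=-\epsilon\rho_1\rho_2\le 0$ pointwise, $\rho_2(\cdot,t)$ remains radial, satisfies $0\le\rho_2(\cdot,t)\le\rho_2(\cdot,0)$, and, by the definition of $\tau_C$, obeys $\|\rho_2(\cdot,t)\|_{L^1}\ge\tfrac12\|\rho_2(\cdot,0)\|_{L^1}$. Treating $\rho_2(\cdot,0)$ as $\theta\chi_{B_1}$ as in the paragraph preceding the statement, the observation recalled there applies to $f=\rho_2(\cdot,t)$, so $\rho_2(\cdot,t)$ is more concentrated than $g$, and Lemma~\ref{lemma:inverse_laplacian} gives
\[
\partial_r\bigl[\chi(-\Delta)^{-1}\rho_2(\cdot,t)\bigr](r)\le\partial_r H(r)\le 0,\qquad r>0,\ t\le\tau_C.
\]

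Next, introduce $M_1(r,t):=\int_{B_r}\rho_1\,dx$, $M(r,t):=\int_{B_r}\rho\,dx$, $H_1:=\chi(-\Delta)^{-1}\rho_2$, and the cumulative reaction loss
\[
E(t):=\int_0^t\epsilon\!\int_{\R^2}\!\rho_1\rho_2\,dx\,ds=\|\rho_2(\cdot,0)\|_{L^1}-\|\rho_2(\cdot,t)\|_{L^1},
\]
so that $0\le E(t)\le\tfrac12\|\rho_2(\cdot,0)\|_{L^1}$ for all $t\le\tau_C$. Repeating the computation in the proof of Proposition~\ref{lemma:FP_comparison} while keeping the reaction term, and using radial symmetry of $\rho_1$ and $H_1$, one obtains
\[
\partial_t M_1=\partial_{rr}M_1-\tfrac{1}{r}\partial_r M_1-(\partial_r H_1)\,\partial_r M_1-\epsilon\!\int_{B_r}\!\rho_1\rho_2\,dx.
\]
The reaction integral is bounded by $\dot E(t)$. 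Combining this with the drift comparison $\partial_r H_1\le\partial_r H$ and $\partial_r M_1\ge 0$, the modified mass $\bar M_1(r,t):=M_1(r,t)+E(t)$ satisfies
\[
\partial_t\bar M_1\ge\partial_{rr}\bar M_1-\tfrac{1}{r}\partial_r\bar M_1-(\partial_r H)\,\partial_r\bar M_1,\qquad \bar M_1(r,0)=M(r,0),
\]
while $M$ satisfies the corresponding equality with the same initial data.

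Finally I would apply the parabolic comparison principle to $\bar M_1-M$, using the $\varepsilon$-regularization $\bar M_1\mapsto\bar M_1+\varepsilon$, $\varepsilon\to 0^+$, exactly as in the proof of Proposition~\ref{lemma:FP_comparison}, to control the singular coefficient at $r=0$ and the behavior as $r\to\infty$. This gives $\bar M_1(r,t)\ge M(r,t)$ for all $r\ge 0$ and $t\le\tau_C$, which rearranges to
\[
\int_{B_r}\rho_1(x,t)\,dx\ge\int_{B_r}\rho(x,t)\,dx-E(t)\ge\int_{B_r}\rho(x,t)\,dx-\tfrac12\int_{\R^2}\rho_2(x,0)\,dx,
\]
i.e.\ \eqref{eq:mass_rho1}. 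The only real technical point is the careful application of the comparison principle with the logarithmic drift coming from $H$; the reaction bookkeeping is clean once one notes that $E$ is independent of $r$, so that adding it to $M_1$ does not alter the radial parabolic operator.
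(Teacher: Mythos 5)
Your argument is correct, but it takes a genuinely different route from the paper's. The paper splits the estimate into two pieces: it first introduces the auxiliary density $\tilde\rho$ solving the $\rho_1$ equation with the reaction term dropped, applies Proposition~\ref{lemma:FP_comparison} to compare the mass functions of $\tilde\rho$ and $\rho$, and then compares $\rho_1$ with $\tilde\rho$ by an elementary integral bookkeeping argument that rests on the pointwise bound $\tilde\rho\ge\rho_1$ and the mass-loss identity $\|\rho_1(\cdot,0)\|_1-\|\rho_1(\cdot,t)\|_1=\|\rho_2(\cdot,0)\|_1-\|\rho_2(\cdot,t)\|_1$. You instead keep the reaction term in the parabolic equation for the mass function $M_1(r,t)$ and absorb it into the time-dependent, $r$-independent shift $E(t)=\int_0^t\epsilon\int_{\R^2}\rho_1\rho_2\,dx\,ds$, giving a one-step comparison-principle argument for $\bar M_1=M_1+E$ versus $M$. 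The advantage of your approach is that it avoids introducing $\tilde\rho$ altogether and collapses the two inequalities \eqref{eq:mass_temp} and \eqref{eq:mass_rho_1_rho_tilde_1} into a single supersolution estimate; the cost is that you must re-derive the mass-function PDE for $M_1$ with the reaction term included and verify that adding the spatially constant $E(t)$ does not disturb the operator, whereas the paper's route reuses Proposition~\ref{lemma:FP_comparison} off the shelf. Both arguments need the same two ingredients -- the drift ordering $\partial_r H_1\le\partial_r H$ for $t\le\tau_C$ and the bound $E(t)\le\tfrac12\|\rho_2(\cdot,0)\|_1$ -- and both handle the singular $r^{-1}$ coefficient with the same $\varepsilon$-regularization near the origin.
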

\begin{proof}
Let $\tilde \rho$ solve the equation for $\rho_1$ without the reaction term:
\begin{equation}
\partial_t \tilde \rho - \Delta \tilde \rho + \chi \nabla \cdot (\tilde \rho \nabla (-\Delta)^{-1} \rho_2) = 0,
\label{eq:modified_rho1}
\end{equation}
with the same initial condition as $\rho_1$.
Note 
that~$\tilde \rho(\cdot, t)$ is more concentrated than $\rho(\cdot, t)$
for all~$t\leq \tau_C$. Indeed, the function $g$ defined in \eqref{def:g}
is less concentrated than $\rho_2(\cdot, t)$ for all~$t \leq \tau_C$,
hence Lemma \ref{lemma:inverse_laplacian} implies that
\[
\chi \partial_r (-\Delta)^{-1} \rho_2(\cdot, t) \leq \partial_r H\leq 0 \text{ for all }t\leq \tau_C,
\]
where $H$ as in (\ref{H:explicit}).
Thus, Proposition \ref{lemma:FP_comparison} gives
\begin{equation}
\int_{B_r} \tilde \rho(x,t) dx \geq \int_{B_r} \rho(x, t)dx \text{ for all }t\leq \tau_C \text{ and } r\geq 0.
\label{eq:mass_temp}
\end{equation}
To prove \eqref{eq:mass_rho1}, it suffices then
to compare $\rho_1$ and $\tilde \rho$ and show that
\begin{equation}
\int_{B_r}  \rho_1(x,t) dx \geq \int_{B_r} \tilde \rho(x, t)dx - \frac{1}{2} \int_{\R^2} \rho_2(x,0)\,dx \text{ for all }t\leq \tau_C \text{ and } r\geq 0.
\label{eq:mass_rho_1_rho_tilde_1}
\end{equation}
Note that
\begin{equation}\label{41120}
\int_{\R^2} \rho_1(x, 0)\,dx -\int_{\R^2} \rho_1(x, t)\,dx
= \int_{\R^2} \rho_2(x, 0)\,dx -\int_{\R^2} \rho_2(x, t)\,dx
\leq \frac{1}{2}\int_{\R^2}\rho_2(x,0)\,dx \text{ for all }t\leq \tau_C,
\end{equation}
and the comparison principle implies that
\begin{equation}\label{rhotildedom}
\tilde\rho(x,t) \geq \rho_1 (x,t) \text{ for all }x,t.
\end{equation}
Hence, we may write
\begin{equation*}
\begin{split}
&\int_{B_r} \rho_1(x, t) dx = \int_{\mathbb{R}^2} \rho_1(x, t)dx
- \int_{\mathbb{R}^2\backslash B_r} \rho_1(x, t)dx\\
&\ge
\int_{\mathbb{R}^2} \rho_1(x,0)dx-\farc{1}{2}\int_{\mathbb{R}^2} \rho_2(x,0)dx
- \int_{\mathbb{R}^2\backslash B_r} \tilde\rho(x, t)dx
\\
&= -\frac{1}{2}\int_{\R^2} \rho_2(x,0)\,dx +
\int_{\mathbb{R}^2} \tilde \rho(x, 0) dx -
\int_{\mathbb{R}^2\backslash B_r} \tilde \rho(x, t)dx
= \int_{B_r} \tilde \rho(\cdot, t)dx -\frac{1}{2}\int_{\R^2} \rho_2(x,0)\,dx,
\end{split}
\end{equation*}
which is \eqref{eq:mass_rho_1_rho_tilde_1}. Here we used \eqref{41120} and \eqref{rhotildedom}
in the first step, and conservation of mass for $\tilde \rho$ in the last step.
%
%
\end{proof}

\section{Weak weighted Poincar\'e-type inequalities}\label{sec:wwpti}

In this section, we develop some analytical tools that we will need
to derive sufficiently sharp estimates on the convergence
to equilibrium rates for the solutions to the Fokker-Planck equation with
a logarithmic potential. To motivate these results, consider the Fokker-Planck equation
\begin{equation}
\partial_t \rho - \Delta \rho + \nabla\cdot(\rho \nabla H) = 0
\text{ in } \mathbb{R}^2 \times [0,+\infty),
\label{eq:fp_const}
\end{equation}
where $H = \chi(-\Delta)^{-1} g$ is time independent, and $g(x)$ is
the radially symmetric function supported in $B(0,1)$ defined in \eqref{def:g}.
As outlined in the previous section, we plan to use the solution $\rho$ as a comparison tool to control the behavior of $\rho_1.$

The operator $L$
\[
L \rho = -\Delta\rho+ \nabla \cdot (\rho \nabla H)
\]
is self-adjoint in the weighted space $L^2(e^{-H}dx)$ (when defined on a natural
weighted Sobolev space),
and is non-negative. Its unique ground state corresponding to the zero eigenvalue is
a multiple of $e^H$, provided that
\[
\int e^H\,dx < \infty,
\]
otherwise there is no ground state.
In our situation, $H$ is given by \eqref{H:explicit}, so that
\begin{equation}
\label{eH}
e^{H(x)} = \begin{cases}
({e}/{2})^{\gamma/8}, & |x|< {1}/{\sqrt{2}},\\
e^{\gamma/4} |x|^{\gamma/4} e^{-\gamma|x|^2/4}, & {1}/{\sqrt{2}}\leq |x| \leq 1,\\
|x|^{-\gamma/4}, &|x|\geq 1.
\end{cases}
\end{equation}
As the evolution \eqref{eq:fp_const} conserves the integral of $\rho$,
we expect theat
\[
\rho(t,x)\to e^{H(x)} \Big(\int \rho\,dx\Big)\Big( \int e^H\,dx\Big)^{-1},
\hbox{ as $t\to+\infty$.}
\]
The dual operator $L^*$ with respect to
the standard~$L^2(dx)$ inner product,
given by
\[
L^*f = -\Delta f - \nabla H \cdot \nabla f,
\]
is self-adjoint in $L^2(e^H \,dx),$ with the ground state equal to a constant.
The corresponding dual evolution is
\begin{equation}
\label{eq:f}
\partial_t f - \Delta f - \nabla H \cdot \nabla f = 0.
\end{equation}
Note that $\rho(\cdot, t)$ solving \eqref{eq:fp_const} is equivalent to
\[
f(x, t) := \rho(x, t)e^{-H(x)}
\]
solving \eqref{eq:f}.
The evolution \eqref{eq:f} conserves the integral of $f(x) \exp(H(x))$
so we expect that
\[
f(x,t)\to\bar f := \Big(\int f_0e^H dx\Big)\Big(\int e^H dx\Big)^{-1}, \hbox{ as $t\to+\infty$,}
\]
where $f_0(x)=f(x,0)$.
Note that we have
\begin{equation}\label{dzdt}
\frac{d}{d t} \underbrace{\int_{\mathbb{R}^2} (f(x,t) - \bar f)^2 e^{H(x)} dx}_{=:Z(t)} = -2\underbrace{\int_{\mathbb{R}^2} |\nabla f(x,t)|^2  e^{H(x)} dx}_{=:W(t)}.
\end{equation}
If we can bound $Z(t)$ from above as
\[
Z(t) \leq g(W(t), \|f_0\|_\infty),
\]
with some function $g$ that increases in $W$, that would allow us to
bound $W(t)$ from below in terms of $Z(t)$ and $\|f_0\|_\infty$. Then,
\eqref{dzdt} would give us a differential inequality for $Z(t)$
leading to an explicit decay estimate on $Z(t)$. 
In the simplest case, the bound $Z \leq CW$ applies, which is a standard Poincar\'e inequality.
Then there is a spectral gap for $L^*$, and exponential in time convergence
to the ground state in $L^2(e^H\,dx).$ This is true for uniformly concave potentials,
as in for the Brascamp-Lieb inequality. However, it is not difficult to verify that
in the case of logarithmic (or even $|x|^\alpha$ with $\alpha<1$) potential there is
no spectral gap, that is, the ground state zero is not an isolated point of
the spectrum. Then, the usual Poincar\'e inequality cannot hold, and one needs what
is called a weak Poincar\'e version that manifests itself in a different,
stronger weight deployed for the gradient norm.

We will prove the weak weighted Poincar\'e inequality for a more
general family of radial weights~$w(r)\geq 0$, which depend on a parameter $\gamma>0$,
than the specific choice
(\ref{eH}),
since the argument is essentially the same.
We will assume that the weights have the following properties:
there exist $0 < r_1 < r_2 < \infty$ and constants $C_0,$ $C_1$
independent of $\gamma$ such that
\begin{eqnarray}
&& C_0^{-1} w(s) \leq w(r) \leq C_0 w(s), \,\,\, {\rm for \,\,\,all}\,\,\,s,r \in [0,r_1]; \label{case0a} \\
&&w'(r)  \leq -C_1 \gamma (r-r_1) w(r), \,\,\, r \in [r_1,r_2];  \label{derivative_w} \\
&&w'(r) \leq -C_2 \gamma r^{-1} w(r), \,\,\, r \in [r_2,\infty). \label{case3a}
\end{eqnarray}
An elementary computation shows that
for the weight $w(r) = \exp(H(r))$ given by~(\ref{eH}),
assumptions (\ref{case0a})-(\ref{case3a}) hold with
\begin{equation}\label{oct2302}
r_1=\farc{1}{\sqrt{2}},~~r_2={3}/{4},
\end{equation}
where the choice of $r_2$ is rather arbitrary; any number larger than $r_1$ would work.
The power weight~$v(r)=(1+r^2)^{-\gamma/2}$ analyzed by
Bobkov and Ledoux in~\cite{bl} does not directly fit the above assumptions;
as we will see below, the natural choice of $r_1$ in this case does depend on $\gamma$,
the difference with our case being the lack of a plateau near zero.
We will indicate changes necessary to accommodate the power weight in Theorem~\ref{thmpower}.

It will be convenient for us to derive a slightly stronger version of the standard Poincar\'e estimate.
Given any $f(x)$, let
\[
\tilde f(r) := \frac{1}{2\pi} \int_0^{2\pi} f(r,\phi) d\phi.
\]
Instead of directly looking for an upper bound for
\[
Z=\int_{\mathbb{R}^2} (f(x) - \bar f)^2w(x) dx,
~~\bar f=\Big(\int_{\mathbb{R}^2} w(x)dx\Big)^{-1}\int_{\mathbb{R}^2} f(x)w(x)dx.
\]
it turns out to be easier
to control the following integral  that is closely related to $Z(t)$:
\[
I:= \int_{\mathbb{R}^2} (f(x) - \tilde f(r_1))^2 w(x) dx =: I_1 + I_2 + I_3.
\]
Here, $I_1, I_2, I_3$ denote the integrals over the three
sets $B_{r_1}$, $B_{r_2}\setminus B_{r_1}$, and $(B_{r_2})^c$, respectively.
Note that
\[
Z=\int_{\mathbb{R}^2} (f(x) - \bar f)^2w(x) dx=
\inf_{a}\int (f- a)^2w(x) dx\leq I.
\]
Let us also define
\[
J_1 :=  \int_{B_{r_1}} |\nabla f|^2 w(x) dx , \quad
J_2 := \int_{B_{r_2} \setminus B_{r_1}} |\nabla f|^2 w(x) dx,\quad J_3 :=  \int_{(B_{r_2})^c} |\nabla f|^2 |x|^2 w(x) dx.
\]
Note that $J_1$ and $J_2$ are directly related to
\[
W=\int |\nabla f|^2 w(x)\,dx,
\]
but $J_3$ has an extra factor $|x|^2$ in the integrand.
\begin{thm}\label{prop:poincare} 
Suppose that the weight $w(x)\ge 0$ is radial and satisfies
\eqref{case0a}-\eqref{case3a}.
Then there exists a universal constant $C$ such that for all sufficiently large $\gamma$ and every $f$ in the weighted
Sobolev class  $W^{1,2}(w\,dx)$
the  following inequalities hold:
\begin{eqnarray}
\label{final_I1}
&&I_1 \leq C J_1,\\
\label{final_I2}
&&I_2 \leq \frac{C}{\gamma} J_2 + \frac{C}{\gamma}  J_1 + \frac14 I_1,\\
\label{final_I3}
&&I_3 \leq \frac{C}{\gamma^2}J_3 + \frac{C}{\gamma^2} J_2 + \frac14 I_2.
\end{eqnarray}
\end{thm}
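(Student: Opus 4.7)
My plan is to treat each of the three inequalities by a common splitting $f(r,\phi)-\tilde f(r_1) = [f(r,\phi)-f(r_k,\phi)] + [f(r_k,\phi)-\tilde f(r_1)]$, with $r_k$ the inner boundary of the relevant region ($r_1$ for $I_2$, $r_2$ for $I_3$), and to extract the correct power of $1/\gamma$ on the radial piece from a one-dimensional weighted Hardy (Muckenhoupt) inequality adapted to the local decay rate of $w$, while controlling the angular piece by a one-sided trace-to-interior estimate that feeds back into the previous $I_{k-1}$.

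For \eqref{final_I1}: since $w$ is comparable to a constant on $B_{r_1}$ by \eqref{case0a}, the standard Poincar\'e inequality on the disk controls $\int_{B_{r_1}}(f-\bar f_{B_{r_1}})^2 w\,dx$ by a multiple of $J_1$; the remaining gap $(\bar f_{B_{r_1}}-\tilde f(r_1))^2$ can be written as a radial integral of $\tilde f(r)-\tilde f(r_1)$ and bounded by Cauchy-Schwarz with $s$ and $1/s$ weights to produce the polar Jacobian. For \eqref{final_I2}, the radial piece at fixed $\phi$ is handled by the 1D Hardy inequality on $[r_1,r_2]$ with weight $w(r)r$ on both sides; the Muckenhoupt constant
\[
A = \sup_{r_1 < y < r_2} \Bigl(\int_y^{r_2} w(r)\, r\, dr\Bigr)\Bigl(\int_{r_1}^{y} \frac{1}{w(r)\, r}\, dr\Bigr)
\]
is $\lesssim 1/\gamma$, via the Gaussian envelope $w(r) \le w(y)\exp(-C_1\gamma((r-r_1)^2-(y-r_1)^2)/2)$ from \eqref{derivative_w} combined with Mills' ratio (in the regime $y-r_1 \gtrsim 1/\sqrt{\gamma}$) or a direct bound otherwise. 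The angular piece is $r$-independent, so it factors as $\bigl(\int_{r_1}^{r_2} w r\,dr\bigr)\int_0^{2\pi}(f(r_1,\phi)-\tilde f(r_1))^2\,d\phi$ with mass of order $w(r_1)/\sqrt{\gamma}$; I then apply the one-sided trace inequality
\[
(f(r_k,\phi)-\tilde f(r_1))^2 \le \tfrac{2}{\delta}\int_{r_k-\delta}^{r_k}(f(r,\phi)-\tilde f(r_1))^2\,dr + \delta\int_{r_k-\delta}^{r_k}(\partial_r f)^2\,dr
\]
(derived from $f(r_k,\phi)=f(r,\phi)+\int_r^{r_k}\partial_s f\,ds$) with $k=1$ and $\delta \sim 1/\sqrt{\gamma}$ chosen with a sufficiently large proportionality constant, so that the $I_1$-like coefficient falls below $1/4$ for $\gamma$ large while the $J_1$-like coefficient lands at $O(1/\gamma)$.

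For \eqref{final_I3} I repeat the pattern on $(B_{r_2})^c$ with $r_k=r_2$. The 1D Hardy inequality now uses weight $w(r) r$ on the left and $w(r) r^3$ on the right (matching the $|x|^2$ factor in $J_3$); the polynomial envelope $w(r) \le w(y)(y/r)^{C_2\gamma}$ from \eqref{case3a} makes the corresponding Muckenhoupt constant $\lesssim 1/\gamma^2$ by an elementary computation of the two factors. The angular piece factors with mass $\int_{r_2}^\infty w r\,dr \sim r_2^2 w(r_2)/\gamma$; the same trace inequality is applied on the thin layer $[r_2-\delta,r_2]\subset B_{r_2}\setminus B_{r_1}$ with $\delta \sim 1/\gamma$, the relevant scale since $w'/w \lesssim -\gamma(r_2-r_1)$ near $r_2$, so $w\sim w(r_2)$ on this layer; the $I_2$-like coefficient then falls below $1/4$ and the $J_2$-like coefficient is $O(1/\gamma^2)$, yielding \eqref{final_I3}. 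The main obstacle is precisely the calibration of $\delta$ in each trace step: both coefficients in the trace are multiplied by a mass factor determined by $\int w r\,dr$, and the trade-off between making $\delta$ large enough to reduce the $I_{k-1}$ coefficient below $1/4$ and small enough to keep the $J_{k-1}$ coefficient at the correct power of $\gamma$ works out cleanly only because the two decay scales ($1/\sqrt{\gamma}$ near $r_1$ and $1/\gamma$ near $r_2$) exactly match the two Muckenhoupt gains; this asymmetry is why the hypothesis ``$\gamma$ sufficiently large'' is essential.
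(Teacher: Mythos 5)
Your proposal is correct and takes a genuinely different route from the paper's. The radial half of the argument is morally the same in both proofs: the paper's Cauchy--Schwarz with the helper function $h = u^{1/2}$ (its equations (5.15)--(5.22)) is exactly the classical proof of the one--dimensional Hardy inequality with a Muckenhoupt-type constant, and your $A\lesssim 1/\gamma$ (resp.\ $A\lesssim 1/\gamma^2$) computations via the Gaussian, resp.\ polynomial, envelopes of $w$ reproduce the same bounds. Where you diverge is in the angular/non-radial estimate. The paper expands $f$ in the Fourier series $f(r,\phi)=\tilde f(r)+\sum_n(\psi_n\cos n\phi+\xi_n\sin n\phi)$, handles each mode separately, controls the boundary value $\psi_n(r_k)^2$ by a dichotomy, and when $\psi_n(r_k)^2$ is not already tiny it ``backs up'' a distance $\sim\gamma^{-1/2}$ (resp.\ $\sim\gamma^{-1}$) inside the adjacent region to find a point $r_3$ with $|\psi_n(r_3)|\leq |\psi_n(r_k)|/2$, and extracts a gradient integral from there by Cauchy--Schwarz. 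You instead use the pointwise triangle decomposition $f(r,\phi)-\tilde f(r_1)=[f(r,\phi)-f(r_k,\phi)]+[f(r_k,\phi)-\tilde f(r_1)]$, control the radial piece by the 1D Hardy inequality at each fixed $\phi$, and control the boundary piece by a trace/averaging inequality on a layer $[r_k-\delta,r_k]$ of exactly the same thickness, $\delta\sim K/\sqrt{\gamma}$ or $K/\gamma$, producing directly a $C/K$ coefficient on the $I_{k-1}$ term (absorbed by taking $K$ large) and a $K/\gamma$ or $K/\gamma^2$ coefficient on the $J_{k-1}$ term. The two mechanisms are the same at the level of length scales, but yours is mode-free and avoids the $L^2(d\phi)$ orthogonality and the $\psi_n(r_3)\leq\psi_n(r_k)/2$ ``back-up'' clause, so it is arguably more modular; the paper's Fourier form keeps the dependence on the angular frequency explicit, which is not needed for this theorem but is sometimes convenient. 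The one point to state with care if you write this up fully is that the constants in the trace inequality are controlled because you bound $\int g^2\,dr$ and $\int(\partial_r g)^2\,dr$ on the thin layer by $u(r_k)^{-1}$ times the corresponding weighted integrals, using that $u$ is \emph{decreasing} (hence $\geq u(r_k)$) there rather than ``roughly constant''; this keeps the proportionality constants independent of $K$, so $K$ can be fixed first and $\gamma$ sent to infinity afterwards without circularity.
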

%
\noindent\it Remarks. \rm 
1. As usual, it suffices to prove the inequalities for $f \in C_0^\infty(\R^2).$ \\
2. The factors $1/4$ in estimates \eqref{final_I2} and \eqref{final_I3}
are needed (any factor less than one would work) to derive the sharpest version
of the convergence to equilibrium estimate. \\
3. Here and in the estimates that follow, $C$ and $c$ stand for universal constants (in particular independent of $\gamma$) that may change from expression to expression.
These constants may depend on $r_1,r_2,C_0,C_1$ and $C_2.$ \\
4. The proof extends to all dimensions with a minor adjustment of the constants.
While in dimensions $d \ne 2$ the logarithmic behavior of $H$  does not correspond
to the Green's function of the Laplacian, the behavior of a particle in such
slowly growing potential is of an independent interest.\\
5. All arguments, after minor adjustments, can be made to work for $\gamma > d.$

\begin{proof}
Since we will be working in polar coordinates, it is convenient to incorporate the
Jacobian into the weight, setting $u(r) =r w(r)$.
Let us restate our assumptions on $w$ in terms of $u.$ 
On the interval $[r_1, r_2]$ we have
\begin{equation}\label{derivative_u}
\begin{split}
u'(r) = rw'(r)+w(r) \leq \left(-C_1 \gamma (r-r_1)+\frac1r \right)u(r).
\end{split}
\end{equation}
Thus if $\gamma$ is large, $u$ is increasing at most for only a small distance past $r_1$,
and reaches its maximum no further than $r_{max} = r_1 + O(\gamma^{-1})$.
In particular, there is $\gamma_0$
large enough so that 
\[ u'(r) \leq -c\sqrt{\gamma} u(r)~~
\hbox{for all $r \in [r_1 + \dfrac{1}{\sqrt{\gamma}},r_2]$, for $\gamma>\gamma_0$,},
\]
with some $c>0$. 
For $r \in [r_2,+\infty)$,
we have
\[
u'(r) \leq (-C_2 \gamma+1) w(r) \leq -C_3 \gamma r^{-1} u(r),
\]
for large enough $\gamma.$

Putting together the observations above, $u$ satisfies the following differential
inequalities, with some $C, c>0$, and $\tilde r_1 := r_1+ {1}/{\sqrt{\gamma}}$:
\begin{subnumcases}{u'(r)\leq\label{ineq_w}}
   Cu(r) & for $r \in [r_1, \tilde r_1),$ \label{case1}\\
   -c\sqrt{\gamma} u(r) & for $ r \in [\tilde r_1, r_2),$ \label{case2}\\
   -c\gamma r^{-1} u(r) & for $r\in[ r_2, +\infty)$.\label{case3}
\end{subnumcases}
These are the inequalities that we will use in the analysis below, along with \eqref{derivative_u}.

We first note that (\ref{final_I1})
is a direct consequence of a slight variation of the standard proof
of the Poincar\'e's inequality (see, e.g. \cite{Evans}), so we only need to
estimate $I_2$ and $I_3$. We will first show the inequalities for the
radially symmetric $f$, where we only need the first term in the right side
of \eqref{final_I2} and first two terms in the right side of \eqref{final_I3}, respectively, and then consider a general $f$.

\noindent\textbf{$\bullet$ Control of $I_2$: the radial estimates.}
Let $f$ be radial, and $h> 0$ an arbitrary function of single variable, then
\begin{equation}\label{ineq_radial_1}
\begin{split}
\int_{r_1}^{r_2} (f(r) - f(r_1))^2 u(r) dr
&= \int_{r_1}^{r_2} \left( \int_{r_1}^r f'(s) ds\right)^2 u(r) dr\\
&\leq \int_{r_1}^{r_2} \left( \int_{r_1}^r f'(s)^2 h(s) ds\right)  \left( \int_{r_1}^r h(t)^{-1}  dt\right) u(r) dr\\
&= \int_{r_1}^{r_2} f'(s)^2 h(s) \int_s^{r_2} u(r) \left( \int_{r_1}^r h(t)^{-1}  dt\right)dr ds.
\end{split}
\end{equation}
We choose $h = u^{1/2}$, and claim that
\begin{equation}
\label{claim1}
\int_s^{r_2} u(r) \left( \int_{r_1}^r u(t)^{-1/2}  dt\right)dr \leq \frac{C}{\gamma} u(s)^{1/2} \quad \text{ for all }s\in[r_1, r_2].
\end{equation}
Once this claim is proved, plugging it into \eqref{ineq_radial_1} yields
\begin{equation}\label{eq_radial1}
I_2=\int_{r_1}^{r_2} ( f - f(r_1))^2 u dr
\leq \frac{C}{\gamma} \int_{r_1}^{r_2} f'(s)^2 u(s) ds=\frac{C}{\gamma}J_2.
\end{equation}

Now, let us prove   \eqref{claim1}. To this end, we will show that
\begin{equation}\label{inner_int}
\int_{r_1}^r u(t)^{-1/2}  dt \leq \frac{C}{\sqrt{\gamma}}u(r)^{-1/2}
\quad\text{ for all }r \in [r_1, r_2],
\end{equation}
and
\begin{equation}
\label{claim2}
\int_s^{r_2} u(r)^{1/2} dr \leq \frac{C}{\sqrt{\gamma}} u(s)^{1/2} \quad \text{ for all }s\in[r_1, r_2],
\end{equation}
which together imply (\ref{claim1}) immediately.
To prove (\ref{inner_int}), we note that, if $r \in [r_1, \tilde r_1],$ \eqref{derivative_u}
implies
\[
\hbox{$u(t)^{-1/2} \leq Cu(r)^{-1/2}$ for any $t \in [r_1, r]$,}
\]
hence \eqref{inner_int} holds for $r \leq \tilde r_1$.
If $r >\tilde r_1$, we split the integration domain in the left side of~(\ref{inner_int}) as
\begin{equation}\label{oct1020}
\int_{r_1}^r u(t)^{-1/2}  dt=\int_{r_1}^{\tilde r_1} u(t)^{-1/2}  dt
+\int_{\tilde r_1}^r u(t)^{-1/2}  dt=A+B.
\end{equation}
Again by \eqref{derivative_u}, we have
\[
A\le \frac{C}{\sqrt{\gamma}} u(\tilde r_1)^{-1/2}
\le \frac{C}{\sqrt{\gamma}} u(r)^{-1/2},
\]
as $u(r)$ is decreasing for $r>\tilde r_1$.
For the second integral in (\ref{oct1020}), note that \eqref{case2} gives
\[
\hbox{$u(t)^{-1/2} \leq e^{-c\sqrt{\gamma} (r-t)/2} u(r)^{-1/2}$ for $t \in [\tilde r_1, r]$,}
\]
thus
\begin{equation}\label{oct1022}
B\le \frac{C}{\sqrt{\gamma}} u(r)^{-1/2}.
\end{equation}
To prove (\ref{claim2}), note that if $s > \tilde r_1$, then (\ref{claim2}) follows
directly from \eqref{case2}, as in (\ref{oct1022}).
If~$s <\tilde r_1$, we again split the integration domain
\begin{equation}\label{oct1024}
\int_{s}^{r_2} u(t)^{1/2}  dt=\int_{s}^{\tilde r_1} u(t)^{1/2}  dt
+\int_{\tilde r_1}^{r_2} u(t)^{1/2}  dt=A+B.
\end{equation}
The first integral in the right side can be controlled by
\[
A\le \frac{C}{\sqrt{\gamma}} u(s)^{1/2} ,
\]
because $u(t)\le Cu(s)$ on this interval due to \eqref{derivative_u},
and $|\tilde r_1-s|\le 1/\sqrt{\gamma}$.
The second integral can be controlled by
\[
B\le \frac{C}{\sqrt{\gamma}} u(\tilde r_1)^{1/2}\le \frac{C}{\sqrt{\gamma}} u(s)^{1/2},
\]
by \eqref{case2} and \eqref{derivative_u}.

\textbf{$\bullet$ Control of $I_2$ for a non-radial function.}
For the general non-radial case, we  decompose a function $f = f(r,\phi)$
into the Fourier series
\begin{equation}\label{f_decomposition}
f(r,\phi) = \tilde f(r) + \sum_{n=1}^\infty (\psi_n(r) \cos(n\phi) + \xi_n(r) \sin(n\phi)).
\end{equation}
Using this decomposition, $I_2$ becomes
\begin{equation}\label{I2_nonradial}
I_2
= \int_{r_1}^{r_2} (\tilde f(r) - \tilde f(r_1))^2 u(r)dr + \pi
\sum_{n=1}^\infty   \int_{r_1}^{r_2}   (\psi_n(r)^2 + \xi_n(r)^2) u(r) dr,
\end{equation}
whereas $J_2$ becomes
\begin{equation}\label{J2_nonradial}
J_2 
=\int_{r_1}^{r_2} \tilde f'(r)^2 u(r)dr
+ \pi\sum_{n=1}^\infty   \int_{r_1}^{r_2}\left(\frac{n^2}{r^2} \psi_n(r)^2
+ \frac{n^2}{r^2} \xi_n(r)^2 +\psi_n'(r)^2 +  \xi_n'(r)^2\right) u(r)dr   .
\end{equation}
Note that $I_1$ and $J_1$ can be written in the same form as $I_2$ and $J_2$ with
the domain of integration replaced by $[0,r_1] $.
To bound $I_2$, we will prove the following estimate for each $n\geq 1$
\begin{equation}\label{eq:phi_1}
\int_{r_1}^{r_2} \psi_n(r)^2 u(r) dr \leq
\frac{C}{\gamma}  \int_{r_1}^{r_2}  \psi_n'(r)^2 u(r) dr +
\frac{C}{\gamma} \int_{0}^{r_1} \psi_n'(r)^2 u(r) dr  +
\farc{1}{4}  \int_{0}^{r_1} \psi_n(r)^2 u(r) dr,
\end{equation}
with an identical estimate holding for $\xi_n$.
With (\ref{eq:phi_1}) in hand, adding~\eqref{eq_radial1}  for $\tilde f$
and \eqref{eq:phi_1} both for $\psi_n$ and $\xi_n$, we arrive at \eqref{final_I2}.

To prove \eqref{eq:phi_1}, first note that
\begin{equation}\label{oct1102}
\begin{aligned}
\int_{r_1}^{r_2} \psi_n(r)^2 u(r) dr
&\leq 2 \int_{r_1}^{r_2} (\psi_n(r) - \psi_n(r_1))^2 u(r) dr +
2\psi_n(r_1)^2  \int_{r_1}^{r_2} u(r) dr\\
&\le
\frac{C}{\gamma}  \int_{r_1}^{r_2}  \psi_n'(r)^2 u(r) dr+
2\psi_n(r_1)^2  \int_{r_1}^{r_2} u(r) dr.
\end{aligned}
\end{equation}
We have used \eqref{eq_radial1} applied to $\psi_n(r)$ in the last inequality above.
To bound the last integral in the right side, we use \eqref{derivative_u} to observe that
$u(\tilde r_1)\le 2 u(r_1)$ for $\gamma$ sufficiently large, and then
also \eqref{case2} to get
\begin{equation}\label{aux1719}
\begin{split}
\int_{r_1}^{r_2} u(r) dr &= \int_{r_1}^{\tilde r_1} u(r) dr + \int_{\tilde r_1}^{r_2} u(r) dr
\leq \frac{2}{\sqrt{\gamma}}u(r_1) +
\int_{\tilde r_1}^{r_2} 2 e^{-c\sqrt{\gamma}(r-\tilde r_1)} u(r_1) dr \\
&\leq \frac{2}{\sqrt{\gamma}}\left(1+\frac1c\right)u(r_1).
\end{split}
\end{equation}
Note that if
\begin{equation}\label{eq:temp1}
4\left(1+\frac1c\right)\psi_n(r_1)^2 \frac{1}{\sqrt{\gamma}} u(r_1)
\leq \frac1{500} \int_0^{r_1} \psi_n(r)^2 u(r) dr,
\end{equation}
then \eqref{eq:phi_1} follows from (\ref{oct1102}), (\ref{aux1719}) and
(\ref{eq:temp1}). If \eqref{eq:temp1} does not hold,
and $\gamma$ is sufficiently large, there exist $C$ independent of $\gamma$, and
$r_3 \in[ r_1 - C/\sqrt{\gamma}, r_1]$ such that $|\psi_n(r_3)| < |\psi_n(r_1)|/2$.
Here, 
we used the fact that $u(r)\geq {C_0^{-1}u(r_1)}/{2}$ for all
$r \in [{r_1}/{2}, r_1]$ due to \eqref{case0a}.
Thus, we have
\begin{equation}\label{aux5171}
\begin{aligned}
\int_{0}^{r_1} \psi_n'(r)^2 u(r) dr &\geq \int_{r_3}^{r_1} \psi_n'(r)^2 u(r) dr
\geq \left(\int_{r_3}^{r_1} \psi_n'(r) dr\right)^2
\Big(\int_{r_3}^{r_1} \frac{1}{u(r)} dr\Big)^{-1}\\
&\geq \frac{|\psi_n(r_1)|^2}{4} \frac{u(r_1)}{2C_0(r_1-r_3)}
\geq \frac{|\psi_n(r_1)|^2}{4} \frac{\sqrt{\gamma}u(r_1)}{2C_0 C},
\end{aligned}
\end{equation}
which, using \eqref{aux1719}, gives that if (\ref{eq:temp1}) fails, then
\[
\psi_n(r_1)^2  \int_{r_1}^{r_2} u(r) dr \leq \frac{C}{\sqrt{\gamma}} \psi_n(r_1)^2  u(r_1)
\leq \frac{C}{\gamma}\int_{0}^{r_1} \psi_n'(r)^2 u(r) dr.
\]
This finishes the proof of \eqref{eq:phi_1}, and hence also of \eqref{final_I2}.

\textbf{$\bullet$ Control of $I_3$: radial estimates.}
To control $I_3$ for a radial function $f$, first note that
\begin{equation}\label{eq_radial_I3}
\int_{r_2}^\infty (f(r)- f(r_1))^2 u(r) \, dr
\leq 2\int_{r_2}^\infty (f(r)- f(r_2))^2 u(r) \, dr
+ 2 (f(r_2) - f(r_1))^2\int_{r_2}^\infty u(r) \, dr.
\end{equation}
We start with the second term in the right side, and claim that
\begin{equation}\label{eq_f_temp}
 (f(r_2) - f(r_1))^2\int_{r_2}^\infty u(r)dr \leq \frac{C}{\gamma^2} J_2.
\end{equation}
To this end,
note that \eqref{case3}
implies that for all $r > s \geq r_2$ we have
\begin{equation}\label{ff515}
u(r) \leq u(s) \left( \frac{s}{r} \right)^{c\gamma}.
\end{equation}
Applying with $s=r_2$ we get
\begin{equation}\label{int_w_temp}
\int_{r_2}^\infty u(r)dr \leq \frac{C}{\gamma} u(r_2).
\end{equation}
Also note that
\begin{equation}\label{f_r1r2}
\begin{split}
(f(r_2) - f(r_1))^2 = \left(\int_{r_1}^{r_2} f'(r) dr\,\right)^2
\leq \left(\int_{r_1}^{r_2} f'(r)^2 u(r)\, dr\right)
\left(\int_{r_1}^{r_2} \frac{1}{u(r)} \,dr\right).
\end{split}
\end{equation}
%
%
Next, we will show that
\[
\int_{r_1}^{r_2} \frac{1}{u(r)}\, dr \leq\frac{C}{\gamma u(r_2)}.
\]
By \eqref{derivative_u}, we have
\begin{equation}\label{oct1602}
\left(\frac{1}{u(r)}\right)' = -\frac{u'(r)}{u(r)^2} \geq
\left(C_1 \gamma (r-r_1)-\frac1r \right)\frac{1}{u(r)}
\quad\text{for }r\in [r_1,r_2].
\end{equation}
Hence, provided that $\gamma$ is sufficiently large,
we have
\[
\left(\frac{1}{u(r)}\right)' \geq  \frac{c \gamma}{u(r)},
\hbox{ for } r \in \Big[\frac{r_1+r_2}{2}, r_2\Big],
\]
with some $c>0$, implying that
\begin{equation}\label{oct1604}
\frac{1}{u(r)} \leq e^{c\gamma(r-r_2)} \frac{1}{u(r_2)},
\hbox{ for } r \in \Big[\frac{r_1+r_2}{2}, r_2\Big].
\end{equation}
By (\ref{oct1602}), we also have
\[
\left(\frac{1}{u(r)}\right)' \geq
- \frac{C}{u(r)}, \,\,\,{\rm thus}\,\,\,\frac{1}{u(r)}
\leq \frac{C}{u(\frac{r_1+r_2}{2})}\leq \frac{Ce^{-c\gamma(r_2-r_1)/2}}{u(r_2)},
\hbox{ for } r\in\Big[r_1, \frac{r_1+r_2}{2}\Big].
\]
We used (\ref{oct1604}) with $r=(r_1+r_2)/2$ in the last inequality above.
Putting these estimates together yields
\[
\int_{r_1}^{r_2} \frac{1}{u(r)}\,dr \leq C\int_{r_1}^{\frac{r_1+r_2}{2}} e^{-c\gamma(\frac{r_2-r_1}{2})} \frac{1}{u(r_2)}\, dr  + \int_{\frac{r_1+r_2}{2}}^{r_2}  e^{c\gamma(r-r_2)} \frac{1}{u(r_2)}\, dr  \leq   \frac{C}{\gamma u(r_2)}.
\]
Combining this bound with \eqref{f_r1r2} and \eqref{int_w_temp} gives us \eqref{eq_f_temp}.

For the first integral in the right side of \eqref{eq_radial_I3},
a computation identical to \eqref{ineq_radial_1}, but
with~$r_1$ replaced by $r_2$, and $r_2$ replaced by $\infty$, yields
\begin{equation}\label{ineq_radial_2}
\begin{split}
\int_{r_2}^{\infty} (f(r) - f(r_2))^2 u(r) dr \leq  \int_{r_2}^{\infty} f'(s)^2 h(s) \int_s^{\infty} u(r) \left( \int_{r_2}^r h(t)^{-1}  dt\right)dr ds,
\end{split}
\end{equation}
for any $h>0$. We again choose $h = u^{1/2}$, and claim that
\begin{equation}
\label{claim32}
\int_s^{\infty} u(r) \left( \int_{r_2}^r u(t)^{-1/2}  dt\right)dr
\leq \frac{C}{\gamma^2 }s^2 u^{1/2}(s) \quad \text{ for all }s \geq  r_2,
\end{equation}
with some $C>0$ (to be shown below). Substituting this into \eqref{ineq_radial_2} gives
\begin{equation}\label{eq_radial2}
\int_{r_2}^{\infty} ( f(r) - f(r_2))^2 u(r) dr
\leq \frac{C}{\gamma^2} \int_{r_2}^{\infty} (f'(s))^2 s^2 u(s) ds,
\end{equation}
and combining it with \eqref{eq_f_temp} and \eqref{eq_radial_I3} yields
\begin{equation}\label{eq_radial_I3_final}
\int_{r_2}^\infty (f(r)- f(r_1))^2 u(r)dr \leq   \frac{C}{\gamma^2}
\int_{r_2}^{\infty} (f'(s))^2 s^2 u(s) ds +
\frac{C}{\gamma^2} \int_{r_1}^{r_2} f'(r)^2 u(r)dr.
\end{equation}
That is, we have
\[
I_3 \leq \frac{C}{\gamma^2} J_3 + \frac{C}{\gamma^2} J_2,
\]
for all radially symmetric $f$.

To show \eqref{claim32}, we consider the inner integral first. Using \eqref{ff515}, we get that if $r_2 \leq t < r$ then
\[
u(r)^{-1/2} \left( \frac{t}{r} \right)^{c\gamma} \geq u(t)^{-1/2},
\]
so that
\[
\int_{r_2}^r u(t)^{-1/2}\,dt \leq u(r)^{-1/2} r^{-c\gamma}
\int_{r_2}^r t^{c\gamma}\,dt \leq \frac{C}{\gamma} u(r)^{-1/2}r.
\]
Thus the left hand side of \eqref{claim32} is bounded from above by
\[
 \frac{C}{\gamma} \int_s^{\infty} u^{1/2}(r) rdr \leq  \frac{C}{\gamma^2 }s^2 u^{1/2}(s) \text{ for all }s> r_2.
\]
The last inequality follows from \eqref{ff515} with $r_2$ replaced by $s$ and a direct computation.


\textbf{$\bullet$ Control of $I_3$ for a nonradial function.}

For a general function $f$, using the decomposition \eqref{f_decomposition},
we can write $I_3$ and $J_3$ as
\begin{eqnarray}\label{I3_nonradial}
&&I_3
= \int_{r_2}^{\infty} (\tilde f(r) - \tilde f(r_1))^2 u(r)dr +
\sum_{n=1}^\infty   \int_{r_2}^{\infty}\pi  (\psi_n(r)^2 + \xi_n(r)^2) u(r) dr,\\
\label{J3_nonradial}
&&J_3 
=\int_{r_2}^{\infty} \tilde f'(r)^2 r^2 u(r)dr \\
&&~~~+\pi
\sum_{n=1}^\infty   \int_{r_2}^{\infty}\Big(\frac{n^2}{r^2} \psi_n(r)^2
+ \frac{n^2}{r^2} \xi_n(r)^2 + \psi_n'(r)^2 + \xi_n'(r)^2\Big) r^2 u(r)dr.
\nonumber
\end{eqnarray}

We now aim to show the following estimate for each $\psi_n$,
$n\geq 1$:
\begin{equation}\label{eq:phi_2}
\int_{r_2}^{\infty} \psi_n^2(r) u(r) dr
\leq \frac{C}{\gamma^2}  \int_{r_2}^{\infty} \psi_n'(r)^2 r^2 u(r) dr
+ \frac{C}{\gamma^2} \int_{r_1}^{r_2} \psi_n'(r)^2 u(r) dr
+    \frac14 \int_{r_1}^{r_2} \psi_n(r)^2 u(r) dr.
\end{equation}
Combining (\ref{eq:phi_2}) with the analogous estimate for $\xi_n$
and the radial estimate \eqref{eq_radial_I3_final}, we will have \eqref{final_I3}.

First, we write
\begin{equation}\label{oct1608}
\int_{r_2}^\infty \psi_n(r)^2 u(r)dr \leq
2 \int_{r_2}^\infty (\psi_n(r) - \psi_n(r_2))^2 u(r) dr
+ 2 \psi_n(r_2)^2\int_{r_2}^\infty  u(r) dr.
\end{equation}
Applying \eqref{eq_radial2} to the first integral in the right side
gives
\[
\int_{r_2}^\infty (\psi_n(r) - \psi_n(r_2))^2 u(r) dr\le
\frac{C}{\gamma^2}  \int_{r_2}^{\infty} \psi_n'(r)^2 r^2 u(r) dr.
\]
For the second term in the right side of (\ref{oct1608}),
by \eqref{int_w_temp} we have
\[
2 \psi_n(r_2)^2\int_{r_2}^\infty  u(r) dr \leq \frac{C}{\gamma}\psi_n(r_2)^2 u(r_2).
\]
Thus, if
\begin{equation}\label{oct1612}
\frac{C}{\gamma}\psi_n(r_2)^2 u(r_2) \leq  \frac14  \int_{r_1}^{r_2} \psi_n(r)^2 u(r) dr,
\end{equation}
we are done. If not, since $u$ is decreasing in $(\tilde r_1, r_2)$,
there exists $r_3 \in [r_2-\frac{16 C}{\gamma}, r_2)$, such that~$\psi_n(r_3) \leq \psi_n(r_2)/2$. Then, we have
\begin{equation}\label{oct1614}
\begin{aligned}
\int_{r_1}^{r_2}  \psi_n'(r)^2 u(r) dr &\geq
\int_{r_3}^{r_2}  \psi_n'(r)^2 u(r) dr \geq \left( \int_{r_3}^{r_2} |\psi_n'(r)| dr\right)^2
\Big({\int_{r_3}^{r_2} \frac{dr}{u(r)}}\Big)^{-1} \\
& \geq C\gamma \psi_n(r_2)^2 u(r_2).
\end{aligned}
\end{equation}
In the last step we used that
\[
\int_{r_3}^{r_2} \frac{1}{u(r)} dr \leq \frac{C}{\gamma u(r_2)}.
\]
The latter inequality follows from the decay
of $u$ on $[r_3,r_2]$ and $r_2-r_3 \leq  16 C/\gamma.$ Thus, if~(\ref{oct1612})
fails, then
\[
2 \psi_n(r_2)^2\int_{r_2}^\infty  u(r) dr \leq \frac{C}{\gamma}\psi_n(r_2)^2 u(r_2) \leq \frac{C}{\gamma^2} \int_{r_1}^{r_2}  (\psi_n')^2 u dr ,
\]
which finishes the proof of \eqref{eq:phi_2}.
\end{proof}

Theorem~\ref{prop:poincare} leads to the following two corollaries. Note that
adding the inequalities in the theorem together, we get
\begin{equation}\label{eq:goal_poincare}
I = I_1 + I_2 + I_3 \leq C\left(J_1 + \frac{1}{\gamma} J_2 + \frac{1}{\gamma^2} J_3\right).
\end{equation}
We also recall, as already noted in the remarks  to Theorem~\ref{prop:poincare}, that the arguments above generalize to an arbitrary dimension $d >2$ in a straightforward manner.
This implies
\begin{cor}\label{cor:poincare1} Suppose that $d \geq 2.$
For the weight $w$ satisfying \eqref{case0a},  \eqref{derivative_w} and \eqref{case3a}, we have
\begin{equation}\label{eq_cor}
\begin{aligned}
\int_{\mathbb{R}^d} |f-\bar f|^2 w dx &\leq
\int_{\mathbb{R}^d} |f-\tilde f(r_1)|^2 w dx \\
&
\leq  C\left( \int_{B_{r_1}} |\nabla f|^2 w dx + \frac{1}{\gamma} \int_{B_{r_2} \setminus B_{r_1}} |\nabla f|^2 w dx +  \frac{1}{\gamma^2} \int_{B_{r_2}^c} |\nabla f|^2 |x|^2 w dx  \right) 
\end{aligned}
\end{equation}
for all sufficiently large $\gamma.$
\end{cor}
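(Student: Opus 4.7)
The plan is to deduce the corollary directly from Theorem~\ref{prop:poincare}, with two short observations. The first inequality is purely variational, while the second requires adding the three estimates of the theorem and absorbing the $\tfrac14 I_1$ and $\tfrac14 I_2$ terms. The dimension-$d$ version then follows because the proof of Theorem~\ref{prop:poincare} only uses the radial decomposition plus Fourier expansion in the angular variable (replaced by spherical harmonics for $d>2$), and the hypotheses \eqref{case0a}--\eqref{case3a} are dimension-independent.

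For the first inequality, I would recall that $\bar f=\bigl(\int w\,dx\bigr)^{-1}\int f w\,dx$ is the unique minimizer of the quadratic functional $a\mapsto \int_{\mathbb{R}^d}(f-a)^2 w\,dx$. Evaluating this functional at the particular constant $a=\tilde f(r_1)$ therefore produces
\begin{equation*}
\int_{\mathbb{R}^d}|f-\bar f|^2 w\,dx \;=\; \inf_{a\in\mathbb{R}}\int_{\mathbb{R}^d}(f-a)^2 w\,dx \;\le\; \int_{\mathbb{R}^d}|f-\tilde f(r_1)|^2 w\,dx \;=\; I_1+I_2+I_3,
\end{equation*}
which is precisely the first inequality.

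For the second inequality, I would combine \eqref{final_I1}--\eqref{final_I3}. Insert \eqref{final_I1} into \eqref{final_I2} to get $I_2\le CJ_1+\tfrac{C}{\gamma}J_2$ (the $\tfrac{C}{\gamma}J_1$ term is dominated by $CJ_1$ for large $\gamma$, and the $\tfrac14 I_1$ term is absorbed via $I_1\le CJ_1$). Then substitute this new bound for $I_2$ into \eqref{final_I3}, which yields
\begin{equation*}
I_3 \;\le\; \frac{C}{\gamma^2}J_3 + \frac{C}{\gamma^2}J_2 + \frac{C}{4}J_1 + \frac{C}{4\gamma}J_2.
\end{equation*}
Adding the three resulting bounds and collecting the $J_1$, $J_2$, $J_3$ terms gives exactly \eqref{eq:goal_poincare}, which is the desired estimate after recognizing $J_i$ as the corresponding integrals in \eqref{eq_cor}.

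No step here is really an obstacle once Theorem~\ref{prop:poincare} is in hand; the only thing to verify carefully is that the absorption of $\tfrac14 I_1$ and $\tfrac14 I_2$ works cleanly (it does, because each is controlled by the previous line of the theorem with a constant that is then multiplied by the small factor $\tfrac14$, so no bootstrapping or Grönwall-type reasoning is needed). For dimension $d>2$, the extension is flagged in Remark 4 of the theorem: set $u(r)=r^{d-1}w(r)$ in the radial coordinates, expand in spherical harmonics $Y_{n,k}$ instead of the trigonometric basis used in \eqref{f_decomposition}, and note that the eigenvalue $n(n+d-2)/r^2$ of the angular Laplacian plays the role of $n^2/r^2$ in \eqref{J2_nonradial} and \eqref{J3_nonradial}. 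All subsequent estimates go through verbatim with constants that depend only on $d$, $r_1$, $r_2$, $C_0$, $C_1$, $C_2$.
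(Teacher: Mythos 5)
Your proposal is correct and follows essentially the same route as the paper: the first inequality is the variational characterization of $\bar f$ as the minimizer of $a\mapsto\int(f-a)^2w\,dx$, and the second is obtained by adding \eqref{final_I1}--\eqref{final_I3} with the $\tfrac14 I_1$, $\tfrac14 I_2$ terms absorbed. The only cosmetic difference is that you do the absorption by sequential substitution rather than by summing all three estimates at once and moving $\tfrac14(I_1+I_2)\le\tfrac14 I$ to the left side; your handling of $d>2$ via spherical harmonics and $u(r)=r^{d-1}w(r)$ matches what the paper delegates to Remark 4 of Theorem~\ref{prop:poincare}.
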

The one dimensional results are in fact stronger and will be considered elsewhere.

Tracing through the proof of Theorem~\ref{prop:poincare}, it is straightforward to check that the result remains true for truncated integrals.
\begin{cor}\label{cor:truncation}
For any $R>r_2$, let us define $I_3^R$ and $J_3^R$ as the truncation of $I_3$ and $J_3$ to the integration domain $B_R \setminus B_{r_2}$. Likewise, let $I_R$ denote the truncation of $I$ to $B_R$. Then we have
\begin{equation}\label{ineqR1}
I_3^R \leq \frac{C}{\gamma^2}J_3^R + \frac{C}{\gamma^2} J_2 + \frac14 I_2,
\end{equation}
and
\begin{equation}\label{ineqR2}
I_R \leq  C\left(J_1 + \frac{1}{\gamma} J_2 + \frac{1}{\gamma^2} J_3^R\right).
\end{equation}
\end{cor}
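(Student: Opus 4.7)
The plan is to observe that the proof of Theorem~\ref{prop:poincare}, and in particular the control of $I_3$, goes through essentially verbatim if every integral of the form $\int_{r_2}^{\infty}(\cdot)\,dr$ is replaced by $\int_{r_2}^{R}(\cdot)\,dr$. The point is that all integrands in the key estimates are nonnegative, so truncating the domain on the right-hand side only makes each bound stronger (or at worst unchanged), while the left-hand side becomes the smaller truncated quantity $I_3^R$. No new analytical input is needed; the work is just bookkeeping.

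Concretely, I would start from the decomposition analogous to \eqref{eq_radial_I3} but with $\infty$ replaced by $R$:
\[
\int_{r_2}^{R}(f(r)-f(r_1))^2 u(r)\,dr \leq 2\int_{r_2}^{R}(f(r)-f(r_2))^2 u(r)\,dr + 2(f(r_2)-f(r_1))^2\int_{r_2}^{R} u(r)\,dr.
\]
For the second term, $\int_{r_2}^{R} u(r)\,dr \leq \int_{r_2}^{\infty} u(r)\,dr \leq Cu(r_2)/\gamma$ by \eqref{int_w_temp}, so the derivation of \eqref{eq_f_temp} still yields a contribution bounded by $(C/\gamma^2)J_2$. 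For the first term, the Cauchy–Schwarz step leading to \eqref{ineq_radial_2} with $h=u^{1/2}$ becomes
\[
\int_{r_2}^{R}(f-f(r_2))^2 u\,dr \leq \int_{r_2}^{R} f'(s)^2\, u^{1/2}(s) \Big(\int_s^{R} u(r)\int_{r_2}^r u(t)^{-1/2}\,dt\,dr\Big)ds,
\]
and since the inner double integral over $[s,R]$ is dominated by the one over $[s,\infty)$, the estimate \eqref{claim32} still applies pointwise, giving the bound $(C/\gamma^2)\int_{r_2}^{R}(f'(s))^2 s^2 u(s)\,ds=(C/\gamma^2)J_3^R$.

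For the nonradial components $\psi_n,\xi_n$, I would truncate \eqref{oct1608} in the same way:
\[
\int_{r_2}^{R}\psi_n^2 u\,dr \leq 2\int_{r_2}^{R}(\psi_n-\psi_n(r_2))^2 u\,dr + 2\psi_n(r_2)^2\int_{r_2}^{R}u\,dr,
\]
handle the first summand by the truncated radial estimate above, and handle the second by the existing dichotomy argument (either \eqref{oct1612} holds, or there exists $r_3\in[r_2-16C/\gamma,r_2)$ with $|\psi_n(r_3)|\leq|\psi_n(r_2)|/2$, after which \eqref{oct1614} is used). Crucially, this dichotomy is carried out entirely on the interval $[r_1,r_2]$, so truncation at $R$ on the outer side does not affect it. Summing over $n$ and adding the radial contribution gives \eqref{ineqR1}. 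Finally, since $I_R=I_1+I_2+I_3^R$, combining \eqref{final_I1}, \eqref{final_I2}, and \eqref{ineqR1} (and absorbing the $\tfrac14 I_1$ and $\tfrac14 I_2$ terms into the left side by rearrangement) yields \eqref{ineqR2}.

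The only potential obstacle is to make sure no step of the original proof secretly relies on sending $r\to\infty$ (for instance, through some limit or integration by parts at infinity); a careful reading confirms that it does not — every estimate is either a pointwise comparison of weights via \eqref{case3} or an explicit finite-interval integral bound. Hence the argument is a routine truncation of the proof of Theorem~\ref{prop:poincare}.
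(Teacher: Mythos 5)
Your proposal is correct and follows precisely the route the paper has in mind: the paper itself gives no separate argument, merely asserting that ``tracing through the proof of Theorem~\ref{prop:poincare}, it is straightforward to check that the result remains true for truncated integrals,'' and your write-up is exactly that trace --- truncating \eqref{eq_radial_I3} and \eqref{oct1608} at $R$, noting that the inner iterated integral in the Cauchy--Schwarz step is monotone in the outer limit so \eqref{claim32} still applies, observing the dichotomy for $\psi_n$ lives entirely on $[r_1,r_2]$ and is untouched, and then summing the three pieces. The concluding rearrangement step is stated a bit loosely (one either substitutes $I_1\le CJ_1$ and $I_2\le\tfrac{C}{\gamma}J_2+CJ_1$ into the right sides, or moves $\tfrac14 I_1+\tfrac14 I_2\le\tfrac14 I_R$ to the left), but either version is routine and yields \eqref{ineqR2}.
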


We now pause to indicate a result that can be obtained with similar techniques
for the power weight $v(x)=(1+|x|^2)^{-\gamma/2}$ with a sufficiently large $\gamma.$
\begin{thm}\label{thmpower}
Let $v(x)=(1+|x|^2)^{-\gamma/2}.$ Then the following weak weighted Poincar\'e inequality holds for all dimensions $d \geq 2$ for sufficiently large $\gamma:$
\begin{equation}\label{powerpoin} \int_{\R^d} |f-\overline{f}|^2 v(x)\,dx \leq
\frac{C(d)}{\gamma} \int_{B_{1}} |\nabla f|^2 v(x)\,dx + \frac{C(d)}{\gamma^2}
\int_{(B_1)^c} |\nabla f|^2 (1+|x|^2) v(x)\,dx. \end{equation}
\end{thm}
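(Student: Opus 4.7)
The plan is to adapt the proof of Theorem~\ref{prop:poincare} to the power weight $v(x)=(1+|x|^2)^{-\gamma/2}$ by choosing a $\gamma$-dependent inner radius. The power weight differs from the weight in \eqref{eH} in that it does not have a plateau of fixed size near the origin; instead $v$ is essentially constant only up to scale $r \sim 1/\sqrt{\gamma}$. Thus I would take $r_1 := 1/\sqrt{\gamma}$ and $r_2 := 1$ and follow the three-region decomposition. The payoff of this choice is precisely the improved $1/\gamma$ factor on $B_1$: since $v$ is approximately constant on the small ball $B_{r_1}$, the scaled standard Poincar\'e inequality on a ball of radius $r_1$ contributes an $r_1^2 = 1/\gamma$ factor automatically, replacing the bound $I_1 \leq CJ_1$ of \eqref{final_I1} by $I_1 \leq (C/\gamma) J_1$.

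The first step is to verify the analogues of \eqref{case0a}--\eqref{case3a}. For $r \in [0,r_1]$ we have $v(r)/v(0) \geq (1 + 1/\gamma)^{-\gamma/2} \geq c > 0$ uniformly in $\gamma$, giving \eqref{case0a}. For $r \in [r_1,r_2]$, the identity $v'(r)/v(r) = -\gamma r/(1+r^2)$ combined with $1+r^2 \leq 2$ yields $v'(r)/v(r) \leq -(\gamma/2)\, r \leq -(\gamma/2)(r-r_1)$, which is \eqref{derivative_w}. For $r \geq r_2 = 1$, $v'(r)/v(r) = -\gamma r/(1+r^2) \leq -(\gamma/2) r^{-1}$, giving \eqref{case3a}. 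With these verified, the scaled Poincar\'e bound on $B_{r_1}$ takes the form
\begin{equation*}
\int_{B_{r_1}} |f - \tilde f(r_1)|^2 v\,dx \leq C r_1^2 \int_{B_{r_1}} |\nabla f|^2\, dx \cdot \sup_{B_{r_1}} v \cdot (\inf_{B_{r_1}} v)^{-1} \cdot \text{const} \leq \frac{C}{\gamma} J_1.
\end{equation*}

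For the middle and tail regions, I would re-run the arguments for \eqref{final_I2} and \eqref{final_I3} from Theorem~\ref{prop:poincare} applied to $u(r) = r v(r)$. A direct computation gives $u'(r)/u(r) = 1/r - \gamma r/(1+r^2)$; this is negative past $r \sim 1/\sqrt{\gamma-1}$, achieves $u'/u \leq -c\sqrt{\gamma}$ for $r \geq \tilde r_1 := C/\sqrt{\gamma}$ with $C$ large, and satisfies $u'/u \leq -c\gamma/r$ on $[r_2,\infty)$, which are the differential inequalities \eqref{case2}--\eqref{case3} needed in the proof. The main obstacle is that $r_1$ and $\tilde r_1$ are now of the same order $1/\sqrt{\gamma}$ rather than separated by that scale, so the case split in \eqref{eq:temp1}--\eqref{aux5171} must be revisited. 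This can be handled by picking $\tilde r_1 = K/\sqrt{\gamma}$ with a sufficiently large universal $K$ and using that $v$ (and hence $u/r$) remains comparable to $v(0)$ throughout $[0, \tilde r_1]$; the return-point argument then produces $r_3 \in [\tilde r_1/2, \tilde r_1]$ with the required drop of $\psi_n$.

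Once the three regional bounds
\begin{equation*}
I_1 \leq \frac{C}{\gamma} J_1, \qquad I_2 \leq \frac{C}{\gamma} J_2 + \frac{C}{\gamma} J_1 + \frac14 I_1, \qquad I_3 \leq \frac{C}{\gamma^2} J_3 + \frac{C}{\gamma^2} J_2 + \frac14 I_2
\end{equation*}
are in hand, adding them (absorbing the $\frac14$ terms on the left) gives, in analogy with \eqref{eq:goal_poincare},
\begin{equation*}
I \leq \frac{C}{\gamma}(J_1 + J_2) + \frac{C}{\gamma^2} J_3 \leq \frac{C}{\gamma} \int_{B_1} |\nabla f|^2 v\,dx + \frac{C}{\gamma^2} \int_{B_1^c} |\nabla f|^2 (1+|x|^2) v\,dx,
\end{equation*}
using $|x|^2 \leq 1 + |x|^2$ in the tail. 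Since $\overline{f}$ minimizes $\int (f-a)^2 v\,dx$ over constants $a$, we have $\int |f-\overline{f}|^2 v\,dx \leq I$, which is \eqref{powerpoin}. The extension to dimensions $d \geq 3$ follows by the same argument with the Jacobian $r^{d-1}$ absorbed into $u$, as noted in Remark~4 following Theorem~\ref{prop:poincare}.
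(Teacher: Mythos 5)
Your proposal follows essentially the same strategy as the paper's proof: adapt Theorem~\ref{prop:poincare} by taking a $\gamma$-dependent inner radius $r_1\sim\gamma^{-1/2}$ (at which scale the power weight has an approximate plateau), which converts \eqref{final_I1} into $I_1\le (C/\gamma)J_1$ via the $r_1^2$ scaling of the standard Poincar\'e inequality, and then re-run the three-region decomposition. You also correctly single out the step that needs repair, namely the return-point argument in the non-radial control of $I_2$ (eqs.~\eqref{eq:temp1}--\eqref{aux5171}), because the needed step-back distance $\sim \gamma^{-1/2}$ no longer fits inside $[0,r_1]$ once $r_1$ is itself $\sim\gamma^{-1/2}$.

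One detail in your fix does not work as stated: you propose pushing $\tilde r_1$ out to $K/\sqrt\gamma$ and finding the return point $r_3\in[\tilde r_1/2,\tilde r_1]$, but with $r_1=1/\sqrt\gamma$ this places $r_3>r_1$ in the middle region. The argument in \eqref{aux5171} needs $r_3<r_1$: the derivative integral it produces must land in $\int_0^{r_1}|\psi_n'|^2 u\,dr\subset J_1$ and the lower-order term it leaves behind must be $\int_0^{r_1}\psi_n^2 u\,dr\subset I_1$, which is the quantity we can already control via the scaled Poincar\'e bound. Putting $r_3$ to the right of $r_1$ would make the absorbed term a piece of $I_2$ itself, risking circularity. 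The paper's fix is to keep $r_3$ below $r_1$ (with $r_1=2/\sqrt\gamma$) by replacing the constant $1/500$ in \eqref{eq:temp1} with a sufficiently large universal constant --- now harmless since $I_1\le(C/\gamma)J_1$ already carries the $\gamma^{-1}$ factor --- so that the step-back distance shrinks to $1/\sqrt\gamma$ and $r_3\in[r_1-1/\sqrt\gamma,\,r_1]$. Alternatively, and closer in spirit to what you seem to intend, one may simply take $r_1=K/\sqrt\gamma$ (not $1/\sqrt\gamma$) for a large universal $K$, which still gives $I_1\le CK^2\gamma^{-1}J_1$ and leaves room for $r_3\in[r_1-c_0/\sqrt\gamma,r_1]$ with $c_0<K$; either way the strategy is the same as the paper's.
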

\begin{proof}
In two dimensions, the only essential difference is that for the weight $v(x)$
the condition \eqref{case0a} holds if we choose $r_1 \lesssim \gamma^{-1/2}$
that depends on $\gamma$. Specifically, we could take $r_1 = {2}/{\sqrt{\gamma}}.$
With such choice, direct computations show that for the weight $u(r)=rv(r)$ the inequality \eqref{derivative_u} remains valid, while \eqref{case2} and \eqref{case3}
hold with $\tilde r_1 = r_1$ and $r_2=1.$ The standard Poincar\'e inequality
becomes
\begin{equation}\label{blIcon}
I_1 \leq Cr_1^2 J_1 = \frac{C}{\gamma} J_1,
\end{equation}
as it is important to keep track of the $r_1^2$ factor which now depends on $\gamma.$
The rest of the of the proof goes through.
One place that requires attention and minor adjustment is the control of~$I_2$ for
a nonradial function, namely the estimates \eqref{eq:temp1} and \eqref{aux5171}.
since we need to "step back" a distance $\tilde C/\sqrt{\gamma}$ into the $[0,r_1]$ region, and we may not have that much space.
However, the factor $1/4$ in \eqref{final_I2} is not crucial for establishing \eqref{powerpoin} given that we can control
$I_1$ via \eqref{blIcon}; any constant would do.
Then we can choose sufficiently large constant $C$ instead of $1/4$ in \eqref{eq:temp1} so that $r_3$ with needed properties can be found in $[r_1-\frac{1}{\sqrt{\gamma}},r_1.]$
With this modification, the rest of the argument goes through. We leave details to the interested reader.
Finally, as we already noted above, the proof generalizes to an arbitrary dimension $d$ with minor adjustments.
\end{proof}

\section{Convergence to equilibrium estimates for Fokker-Planck operators}\label{sec:fp}

\subsection{Weighted $L^2$ norm decay}

With the weak weighted Poincar\'e inequalities in hand, we may now go back to the dual
evolution (\ref{eq:f}) and  the dissipation inequality (\ref{dzdt}):
\begin{equation}\label{oct1618}
\farc{dZ}{dt}=-2W(t),
\end{equation}
with
\[
Z(t)=\int_{\mathbb{R}^2} (f(x,t) - \bar f)^2 e^{H(x)} dx,~~
W(t)=\int_{\mathbb{R}^2} |\nabla f(x,t)|^2  e^{H(x)} dx.
\]
We are going to focus on the specific weight in \eqref{eH}; 
we will need fairly sharp estimates to get close to the heuristic bounds.
Our analysis in this section will be driven by the
nonlinear application we have in mind:  to derive
sharp bounds on the time required
to transport a significant part of density towards the center of the attracting potential, the ball $B_{r_1}.$
We stress that in this section, we do not need the initial data $f_0(x)$ or $\rho_0(x)$ to be radial: the bounds on convergence
to equilibrium in linear setting with a fixed potential apply in full generality.

Although Corollary \ref{cor:poincare1} with $w(x)=e^{H(x)}$
already gives us an upper bound for $Z(t)$, we cannot directly control the right side
of (\ref{eq_cor}) by $W(t)$, due to the extra factor $|x|^2$ in the integrand of $J_3$. To overcome this issue, let us take a truncation at radius $R \geq r_2$ in $I(t)$ and apply Corollary \ref{cor:truncation}: 
\begin{equation}\label{eq:I_temp}
\begin{split}
I(t) &\leq I_R(t) + \frac{C}{\gamma}R^{-\frac{\gamma}{4} + 2} \|f(\cdot, t)\|_\infty^2
\leq C(J_1 + \frac{1}{\gamma} J_2 + \frac{1}{\gamma^2} J_3^R) + \frac{C}{\gamma}R^{-\frac{\gamma}{4} + 2} \|f_0\|_\infty^2\\
&\leq CW(t) + C\frac{R^2}{\gamma^2}W(t) + \frac{C}{\gamma}R^{-\frac{\gamma}{4} + 2} \|f_0\|_\infty^2.
\end{split}
\end{equation}
In the first inequality above we used (\ref{eH}), and in the second
the fact that $\|f(\cdot,t)\|_\infty$ is non-increasing in time, as well as
(\ref{ineqR2}).
To optimize the right side of (\ref{eq:I_temp}) over $R\ge r_2$, we take
\[
R =  \gamma^{4/\gamma} W(t)^{-\frac{4}{\gamma}}\|f_0\|_\infty^{\frac{8}{\gamma}}.
\]
Note that if the radius $R$ defined this way satisfies $R<1,$ then $I(t) \leq CW(t)$ fitting the scheme below.
As
\begin{equation}\label{oct1620}
\gamma^{8/\gamma} \lesssim 1,
\end{equation}
this leads to
\begin{equation}\label{eq_I_temp2}
I(t) \leq CW(t) + C \gamma^{-2} W(t)^{\frac{\gamma-8}{\gamma}}
\|f_0\|_\infty^{\frac{16}{\gamma}}
\leq 2C \max\{W(t),\gamma^{-2} W(t)^{\frac{\gamma-8}{\gamma}}
\|f_0\|_\infty^{\frac{16}{\gamma}}\}.
\end{equation}
Since $Z(t) \leq I(t)$, it follows that (\ref{oct1618}) and (\ref{eq_I_temp2}),
together with (\ref{oct1620}) imply
\begin{equation}\label{diff_ineq1}
Z'(t) \leq -c \min\left\{ Z(t), \gamma^2 Z(t)^{\frac{\gamma}{\gamma-8}} \|f_0\|_\infty^{-\frac{16}{\gamma-8}} \right\}.
\end{equation}

Let us now discuss how this differential inequality relates to the heuristic
bound~$\tau_C\sim L^2/\gamma$ for the reaction time
we have informally derived in Section~\ref{heur},
to give context and outline the main ideas behind the technical estimates that follow.
Let us think for now of  the linear
Fokker-Planck operator~\eqref{eq:fp_const} with the potential $H$ given by \eqref{H:explicit}.
Consider an initial condition $\rho_0$ that has total mass~$M_0$
and is concentrated at a distance $L$ from the origin.
Then~$f=\rho e^{-H}$ solves the dual Fokker-Planck equation \eqref{eq:f}, and \eqref{diff_ineq1} is applicable.
If we drop the term $Z(t)$ from the minimum in~\eqref{diff_ineq4} (which of course strengthens the differential inequality compared to what we really have), then a direct computation,
with yet another use of (\ref{oct1620}), gives
\begin{equation}\label{Zdec519}
Z(t) \leq  \left( Z(0)^{-\frac{8}{\gamma-8}}
+ c\gamma t \|f_0\|_\infty^{-\frac{16}{\gamma-8}}\right)^{-{(\gamma-8)}/{8}}
\leq (c\gamma t)^{-{(\gamma-8)}/{8}} \|f_0\|_\infty^2.
\end{equation}
In our situation, we have $\|f_0\|_\infty \sim M_0 L^{\gamma/4}$ according to the
assumptions on $\rho_0$ and \eqref{eH}. Also, using the relationship between $\rho$ and $f$, we see that
\begin{equation}\label{def:Y}
Z(t)= \int_{\R^2} |\rho(x,t) - \rho_s(x)|^2 e^{-H}\,dx. 
\end{equation}
Here,
\[
\rho_s(x) = e^H \frac{\int \rho_0\,dx}{\int e^H \,dx}
\]
is the stationary state of the same mass as $\rho_0$ to which the solution $\rho$ converges.
From \eqref{def:Y} it is clear that transport of $\rho$ to the origin corresponds to decay of
$Z(t).$
Intuitively, from \eqref{eH} it looks likely that we need 
$Z(t) \ll M^2_0$  in order to be sure that a significant portion of $\rho$ is inside~$B_{r_1}$
(we will make these
arguments precise later). Going back to \eqref{Zdec519} and the estimate on $\|f_0\|_{\infty},$ we find that to ensure the needed bound on $Z(t)$, we need
$t \gtrsim L^4/\gamma,$ which is quite a bit off the heuristic estimate. The situation is similar to the usual heat equation, where the~$L^1$ to~$L^2$ estimate
decays only  as $t^{-d/4},$ while the faster decay rate $t^{-d/2}$ is realized
for the~$L^1$ to~$L^\infty$ estimate. A standard way to attain the
latter estimate if explicit heat kernel is not available (like for diffusions with incompressible drift,
see e.g. \cite{FKR}) is
to combine the $L^1$ to $L^2$
bound with its dual $L^2$ to $L^\infty$ bound. 
We will need to follow a similar route in what follows.
The~$L^\infty$ to $L^2(e^Hdx)$ bound \eqref{Zdec519}
provides a decay estimate for $Z(t)$, which via (\ref{def:Y})
leads to the $L^\infty(e^{-H}dx)$ to $L^2(e^{-H}dx)$ bound for $\rho.$
We will also derive a dual
to \eqref{Zdec519} bound, which is an $L^2(e^{-H}dx)$ to $L^1$ estimate for $\rho.$
Combining them leads to $L^{\infty}(e^{-H}dx)$ to $L^1$ bound for~$\rho$ which will
have the needed decay and also will provide control in the $L^1$ space
most convenient for measuring mass transport.

Before we go to the duality estimates, however, there is one more issue to take care of. The
presence of the term $Z(t)$ under minimum in \eqref{diff_ineq1}
affects the bound \eqref{Zdec519}. The balance of the two terms depends on
the initial data; the second term is smaller if $Z(t)$ is sufficiently small,
namely if
\[ Z(t) \lesssim \|f_0\|_{L^\infty}^2 \gamma^{-\frac{\gamma-8}{4}}.
\]
Our assumptions on $\rho_0$ give $Z(0)\sim M_0^2 L^{\gamma/4}$, and the above condition at $t=0$
translates into an additional constraint~$L \gtrsim \gamma.$
For some configurations of parameters, say
when $1 \ll L \ll \gamma,$ the time delay before the second term in \eqref{diff_ineq1} becomes
smaller can be up to order $\gamma \log \gamma.$
We would like to avoid these additional constraints and significant losses in the estimate of
the transport time, as they appear to be of technical nature.
The idea is to use the $L^\infty$ norm time decay estimate proved in Theorem~\ref{thm:L_infty_bound}.
This gives an outline for the rest of this section.
First, we deploy the $L^\infty$ norm decay bound to improve the weighted $L^2$
control on $f$ and $\rho,$ and then use duality argument to obtain optimal convergence to equilibrium bounds for $\rho$ in~$L^1.$


The differential inequality \eqref{diff_ineq1} can be improved in the following way for $t \gtrsim 1$.
In the second inequality of \eqref{eq:I_temp}, instead of using \eqref{ineqR2} to bound the whole $I_R$, we can instead split
\[
I_R=I_1 + I_2 + I_3^R,
\]
and  directly control $I_1$ and $I_2$ as follows.
The bound in Theorem~\ref{thm:L_infty_bound} implies that
\[
\|f(\cdot, t)e^H\|_{\infty} = \|\rho(\cdot, t)\|_{\infty}
\leq C\gamma \|\rho_0\|_1 = C\gamma \|f_0 e^H\|_1
\]
for all $t\geq 1$. Two immediate consequences are
\begin{eqnarray}\label{eq:def_Z1}
&&\!\!\!\!\!\!\!\!\!
I_1(t) = \int_{B_{r_1}} |f - \tilde f(r_1)|^2 e^H dx
\leq C \gamma^2 \|f_0 e^H\|_1^2 e^{-H(0)}   =: Q_1
\quad \text{ for all }t\geq 1,
\\
\label{eq:def_Z2}
&&\!\!\!\!\!\!\!\!\!
I_1(t) + I_2(t) = \int_{B_{r_2}} |f - \tilde f(r_1)|^2 e^H dx \leq C \gamma^2 \|f_0 e^H\|_1^2 e^{-H(r_2)} =: Q_2 \quad \text{ for all }t\geq 1.  
\end{eqnarray}
Recall that $r_1=1/\sqrt{2}$ and $r_2=3/4$, as defined in (\ref{oct2302}).
Note that $Q_2 \gg Q_1$ due to $\gamma \gg 1$ and \eqref{eH}; hence,
for $t\geq 1$, if $I(t) \geq 4 Q_2$,
then we can bound $I_R$ by \eqref{eq:def_Z2} and \eqref{ineqR1} as follows:
\[
I_R(t) \leq Q_2 + I_3^R \leq Q_2 + \frac{C}{\gamma^2} J_2 + \frac{C}{\gamma^2}J_3^R + \frac14 I_2.
\]
Substituting this into the second inequality of \eqref{eq:I_temp},
and then absorbing $Q_2$ and $\frac{1}{4} I_2$ into the left side, we obtain
\[
I(t) \leq \frac{C}{\gamma^2} W(t) + C\frac{R^2}{\gamma^2}W(t) + \frac{C}{\gamma}R^{-\frac{\gamma}{4} + 2} \|f_0\|_\infty^2 \leq \frac{C}{\gamma^2}W(t) + \frac{C}{\gamma^2} W(t)^{\frac{\gamma-8}{\gamma}} \|f_0\|_\infty^{\frac{16}{\gamma}},
\]
where the last inequality comes from choosing the same optimal $R$ as before,
since the terms containing $R$ are the same as in \eqref{eq:I_temp} (and again, if we get $R \leq r_2$
then $I(t) \leq \frac{C}{\gamma^2}W$ fitting the scheme below).
The $\gamma^{-2}$ factor in the first term then leads to a stronger differential inequality:
\begin{equation}\label{diff_ineq2}
Z'(t) \leq -c\gamma^2 \min\left\{Z(t), Z(t)^{\frac{\gamma}{\gamma-8}} \|f_0\|_\infty^{-\frac{16}{\gamma-8}} \right\}.
\end{equation}

Likewise, for $t\geq 1$, if $I(t) \in [4Q_1, 4Q_2]$, then we control $I_R$ using \eqref{eq:def_Z1}, \eqref{final_I2} and \eqref{ineqR1}:
\[
I_R(t) \leq Q_1 + I_2 + I_3^R \leq Q_1  + \frac{C}{\gamma^2}J_3^R + \frac{C}{\gamma} J_2 + \frac{1}{4} I_1 + \frac{1}{4}I_2.
\]
and a similar to the above argument leads to the differential inequality
\begin{equation}\label{diff_ineq3}
Z'(t) \leq -c \min\left\{\gamma Z(t), \gamma^2 Z(t)^{\frac{\gamma}{\gamma-8}} \|f_0\|_\infty^{-\frac{16}{\gamma-8}} \right\}.
\end{equation}


For all $t\geq 1$, the inequalities \eqref{diff_ineq1}, \eqref{diff_ineq2}, \eqref{diff_ineq3} control convergence of $Z(t)$ to zero.
The above results are summarized in the following proposition.

\begin{prop} For all $t\geq 1$, $Z(t)$ satisfies the following differential inequality:
\begin{equation}\label{diff_ineq4}
Z'(t) \leq -c \min\left\{ \eta(Z(t)) Z(t), \gamma^2 Z(t)^{\frac{\gamma}{\gamma-8}} \|f_0\|_\infty^{-\frac{16}{\gamma-8}} \right\},
\end{equation}
where
\begin{equation}\label{etaeq}
\eta(Z) := \begin{cases}
1 &  \text{ for } Z(t) \leq 4Q_1\\
\gamma &  \text{ for }  Z(t) \in (4Q_1, 4Q_2)\\
\gamma^2 &  \text{ for }   Z(t) \geq 4Q_2.
\end{cases}
\end{equation}
\end{prop}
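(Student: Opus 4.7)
The plan is to recognize that the proposition is essentially a repackaging of the three differential inequalities \eqref{diff_ineq1}, \eqref{diff_ineq2}, \eqref{diff_ineq3} already derived in the preceding discussion, unified via a case split driven by where $Z(t)$ sits relative to the thresholds $4Q_1, 4Q_2$. The point of departure is the dissipation identity $Z'(t) = -2W(t)$ from \eqref{oct1618}, together with the trivial bound $Z(t) \leq I(t)$. Thus one only needs, in each regime, an upper bound of the shape
\[
I(t) \;\leq\; C\,\eta(Z(t))^{-1}\,W(t) + C\gamma^{-2} W(t)^{(\gamma-8)/\gamma}\|f_0\|_\infty^{16/\gamma},
\]
and \eqref{diff_ineq4} follows by substituting into $Z' = -2W$.

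In each case the argument starts from the truncation scheme \eqref{eq:I_temp}: write $I = I_R + (I - I_R)$, control the tail $I - I_R$ using $\|f(\cdot,t)\|_\infty \leq \|f_0\|_\infty$ and the explicit decay $w(x) = e^{H(x)} \sim |x|^{-\gamma/4}$ from \eqref{eH}, then estimate $I_R$ by Corollary~\ref{cor:truncation}, and finally optimize over $R \geq r_2$ with the choice $R = \gamma^{4/\gamma} W(t)^{-4/\gamma}\|f_0\|_\infty^{8/\gamma}$, using $\gamma^{8/\gamma} \lesssim 1$ to clean constants. The baseline case $Z(t) \leq 4 Q_1$ is exactly \eqref{eq:I_temp}, in which Corollary~\ref{cor:truncation} is applied to all three pieces of $I_R$, yielding \eqref{diff_ineq1} with $\eta = 1$.

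The improvement in the other two regimes comes from invoking Theorem~\ref{thm:L_infty_bound}, valid for $t \geq 1$, which through \eqref{eq:def_Z1}--\eqref{eq:def_Z2} gives pointwise-in-time bounds $I_1(t) \leq Q_1$ and $I_1(t) + I_2(t) \leq Q_2$. When $Z(t) \geq 4Q_2$, since $Z \leq I$ one also has $I \geq 4 Q_2$, so splitting $I_R = (I_1 + I_2) + I_3^R$ lets us absorb $Q_2$ into $I/4$ on the left-hand side and apply Corollary~\ref{cor:truncation} only to the tail $I_3^R$; this replaces the $O(1)$ bound on $I_1+I_2$ by a $\gamma^{-2}$-small one and, after the same $R$-optimization, produces \eqref{diff_ineq2}, giving the $\eta = \gamma^2$ regime. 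The intermediate regime $4Q_1 < Z(t) \leq 4Q_2$ is handled identically except that only $I_1$ is absorbed pointwise via $Q_1$, while $I_2$ is estimated through \eqref{final_I2} (gaining one power of $\gamma^{-1}$ rather than $\gamma^{-2}$), producing \eqref{diff_ineq3} and the $\eta = \gamma$ regime.

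The only point requiring care is the compatibility of the text's threshold conditions (stated in terms of $I(t)$) with the proposition's thresholds (stated in terms of $Z(t)$): since $Z \leq I$, the hypothesis $Z(t) \geq 4Q_2$ (resp.\ $Z(t) > 4Q_1$) implies $I(t) \geq 4Q_2$ (resp.\ $I(t) > 4Q_1$), so the corresponding $\eta$ estimate applies. Packaging the three cases by taking the weaker bound in each — that is, by inserting the step function $\eta(Z)$ in \eqref{etaeq} and taking the minimum with the universal $\gamma^{-2}$ tail term — yields \eqref{diff_ineq4}. The main bookkeeping obstacle is verifying that the optimal truncation radius $R$ really is the same in all three scenarios; this is essentially because the $R$-dependent parts of the estimate on $I(t)$ are structurally identical, and it is this coincidence that permits the clean unified form.
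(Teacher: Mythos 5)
Your proposal is correct and follows essentially the same route as the paper: the paper derives \eqref{diff_ineq1}, \eqref{diff_ineq2}, and \eqref{diff_ineq3} from $Z' = -2W$, $Z \leq I$, the truncation scheme \eqref{eq:I_temp} with the $R$-optimization you describe, and the pointwise bounds \eqref{eq:def_Z1}--\eqref{eq:def_Z2} (valid for $t\geq 1$ via Theorem~\ref{thm:L_infty_bound}), then assembles them into \eqref{diff_ineq4}. You also correctly spot and close a small expository gap — the text's case conditions are phrased in terms of $I(t)$ whereas the proposition's thresholds are in $Z(t)$ — by noting $Z\leq I$ makes the implications go the right way.
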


Due to the minimum taken in \eqref{diff_ineq4}, which part will
dominate depends on the initial data, or, more precisely, on the
relationship between $\|f_0\|_\infty$, $Q_1,$ and $Q_2$.
A careful accounting needs to take care of several cases; however,
it turns out that for the sake of the application at hand,
we only need to track the decay of $Z(t)$ until it drops to $Z^\sigma$, defined as
\begin{equation}\label{def:Zeps}
Z^\sigma :=\sigma e^{-H(0)} \|f_0 e^H\|_1^2,
\end{equation}
where $\sigma< 1$ is sufficiently small. The definition of $Z^\sigma$ is
motivated by Proposition~\ref{prop2} below. Basically, we will see that by the time $Z(t)$ reaches $Z^\sigma$, a significant portion of the mass of $\rho = fe^H$ has already moved into $B_{r_1}$, which will be sufficient to prove that significant reaction took place.

The following theorem says that even with the first item in the minimum
in \eqref{diff_ineq4}, the decay of $Z(t)$ is not too much worse than in \eqref{Zdec519} -- namely, as long as $Z(t)$ is above $Z^\sigma$, the presence of the first item in the
min function introduces at most  an extra time delay $t_1$
which is estimated below (and is much better than $\gamma$).

\begin{thm}\label{thm1}Let $f(x,t)$ be the solution to \eqref{eq:f} with initial
condition  $f_0 \in L^\infty(\mathbb{R}^2) \cap L^2(e^H)$, and let $Z(t)$ and  $Z^\sigma$ be given as in \eqref{dzdt} and \eqref{def:Zeps} respectively. Let
$t_1 := C(1 + \log \sigma^{-1} + \log \gamma),$ where $C$ is a sufficiently large universal constant. Then for all $t\geq t_1$, we have
\begin{equation}\label{eq9}
\begin{split}
Z(t)& \leq \max\left\{ Z^\sigma, (c\gamma (t-t_1))^{-\frac{\gamma-8}{8}} \|f_0\|_\infty^2\right\}.
\end{split}
\end{equation}
\end{thm}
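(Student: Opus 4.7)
The plan is to integrate the differential inequality \eqref{diff_ineq4} phase by phase, exploiting the monotonicity of $Z$ (from $Z'(t) = -2W(t) \leq 0$). The goal is to show that $Z(t_1) \leq Z^\sigma$ for $C$ sufficiently large in the definition of $t_1$. Combined with monotonicity, this gives $Z(t) \leq Z^\sigma$ for all $t \geq t_1$, so $Z(t) \leq \max\{Z^\sigma, \tilde Z(t)\}$ with $\tilde Z(t) := (c\gamma(t-t_1))^{-(\gamma-8)/8}\|f_0\|_\infty^2$, which is the stated bound \eqref{eq9}.

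I would split the decay into three phases determined by the step function $\eta(Z)$ in \eqref{etaeq}. In \textbf{Phase~I} ($Z \geq 4Q_2$, so $\eta = \gamma^2$), the inequality reads $Z'(t) \leq -c\gamma^2 \min\{Z, Z^{\gamma/(\gamma-8)}\|f_0\|_\infty^{-16/(\gamma-8)}\}$, so the decay prefactor $\gamma^2$ is large and the traversal time from $Z(1)$ down to $4Q_2$ is $o(1)$. In \textbf{Phase~II} ($4Q_1 \leq Z < 4Q_2$, so $\eta = \gamma$), either exponential decay at rate $\gamma$ or power-law decay with prefactor $\gamma^2$ applies; since $\log(Q_2/Q_1) = H(0) - H(r_2) = O(\gamma)$ by the explicit $H$ from \eqref{eH}, the traversal time from $4Q_2$ down to $4Q_1$ is $O(1)$.

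The bottleneck arises in \textbf{Phase~III} ($Z^\sigma < Z \leq 4Q_1$, so $\eta = 1$). Here \eqref{diff_ineq4} yields only $Z' \leq -c\min\{Z, \gamma^2 Z^{\gamma/(\gamma-8)}\|f_0\|_\infty^{-16/(\gamma-8)}\}$, and the rate-$1$ linear term is typically the active one. The resulting exponential decay requires time $\log(4Q_1/Z^\sigma) = \log(4C\gamma^2/\sigma) = O(\log\gamma + \log\sigma^{-1})$ to reach $Z^\sigma$. Summing the three phases gives a total time $O(1 + \log\gamma + \log\sigma^{-1})$, which matches $t_1 = C(1 + \log\sigma^{-1} + \log\gamma)$ provided $C$ is chosen large enough.

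The main obstacle I anticipate is the careful accounting of the transitions: $\eta(Z)$ switches discontinuously at $Z = 4Q_1$ and $Z = 4Q_2$, and the min in \eqref{diff_ineq4} can itself switch between the linear and the power-law alternative within a single phase. One needs to verify that in every sub-regime the bound reduces to an explicit exponential or power-law integration, and that the time budgets sum to $t_1$. A secondary technical point is uniform control of $Z(1)$ in terms of $\|f_0\|_\infty$: this follows from $Z(1) \leq \|f(\cdot,1)\|_\infty^2\,\|e^H\|_{L^1}$ with $\|e^H\|_{L^1} = O(1)$ for $\gamma > 8$, together with the $L^\infty$-bound of Theorem~\ref{thm:L_infty_bound}, so that the contribution of Phase~I to the time budget is indeed negligible.
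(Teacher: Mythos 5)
Your plan hinges on the claim that \emph{``the goal is to show that $Z(t_1)\leq Z^\sigma$''}; this is a misreading of the theorem and, more importantly, a statement that is false in general. The conclusion \eqref{eq9} does \emph{not} assert that $Z$ drops below $Z^\sigma$ by time $t_1$. It asserts that after a \emph{delay} of length $t_1$, the decay of $Z$ is bounded by the power law $(c\gamma(t-t_1))^{-(\gamma-8)/8}\|f_0\|_\infty^2$, which at $t=t_1$ is infinite and which only crosses below $Z^\sigma$ much later (at a time depending on $\|f_0\|_\infty/\|f_0e^H\|_1$, which can be arbitrarily large). The power-law alternative in the $\max$ is the substantive content of the theorem, not a redundant add-on. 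Concretely, in each regime the linear alternative $\eta(Z)Z$ in \eqref{diff_ineq4} is the minimum only while $Z$ exceeds a data-dependent crossover $Z^*=(\eta(Z)/\gamma^2)^{(\gamma-8)/8}\|f_0\|_\infty^2$; once $Z<Z^*$, the minimum switches to the second (power) alternative $\gamma^2Z^{\gamma/(\gamma-8)}\|f_0\|_\infty^{-16/(\gamma-8)}$, and then $Z$ decays only polynomially. This switch happens systematically in the case of interest (for instance, with $f_0=\rho_0e^{-H}$ and $\rho_0$ a far bump, one has $\|f_0\|_\infty\sim M_0L^{\gamma/4}\gg\|f_0e^H\|_1\sim M_0$), so your phase-traversal times are \emph{not} $O(1)$, $O(1)$, $O(\log\gamma+\log\sigma^{-1})$ as you assert, and the total time to reach $Z^\sigma$ can far exceed $t_1$. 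The same misconception also affects your secondary remark: $\|e^H\|_{L^1}=O(1)$ is false for this $H$; by \eqref{eH}, $\int e^H\,dx\gtrsim (e/2)^{\gamma/8}$, which grows exponentially in $\gamma$.

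The paper's actual argument is structurally different. It defines $t_1$ as a bound on the \emph{total Lebesgue measure of the set of times} $t\geq1$ on which the first (linear) alternative is the minimizer in \eqref{diff_ineq4}. That measure is estimated separately in each of the three $\eta$-regimes using the fact that, while the linear alternative is active there, $Z$ decays exponentially at rate $\eta$; the number of ``decades'' above the crossover $Z^*$ in each regime is controlled by $\log\int e^H\lesssim\gamma$, $\log(Q_2/Q_1)\lesssim\gamma$, and $\log(Q_1/Z^\sigma)\lesssim\log(\gamma^2/\sigma)$, yielding $t_{11}\lesssim1/\gamma$, $t_{12}\lesssim1$, $t_{13}\lesssim1+\log\sigma^{-1}+\log\gamma$. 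On the complementary set of times (of measure $\geq t-t_1$), the second alternative is active; integrating $Z'\leq-c\gamma^2 Z^{\gamma/(\gamma-8)}\|f_0\|_\infty^{-16/(\gamma-8)}$ there (and using that $Z$ only further decreases during the linear-dominated intervals) yields precisely the delayed power-law bound of \eqref{eq9}. If you want to repair your write-up, you should keep your observations that $Z$ is monotone and that the regimes are ordered, but replace the ``traversal time of each phase'' accounting by an accounting of only the linear-dominated time within each phase, and then supply the power-law integration over the remaining times separately.
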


\begin{proof}
Note that $Z(t)$ is decreasing in time, and in every regime where the form of $\eta$ in \eqref{etaeq} stays fixed,
once the second term becomes the smaller one, this continues for all subsequent times.
Let us first estimate the total time in the interval $t\geq 1$ where the first term under minimum in \eqref{diff_ineq4} is smaller, while
$Z(t)\geq 4Q_2$.
Comparing the two terms in the min function of \eqref{diff_ineq4},
we see that the minimum is achieved by the first term as long as $Z(t) \geq \|f_0\|_\infty^2$.
Thus, $Z(t)$ decays exponentially not slower than $\exp(-c\gamma^2t)$.
Note that at $t=1$, we have
\[
Z(1) \leq Z(0) = \int_{\mathbb{R}^2} (f_0 - \bar f)^2 e^H dx \leq \int_{\mathbb{R}^2} f_0^2 e^H dx \leq \|f_0\|_\infty^2 \int_{\mathbb{R}^2} e^H dx,
\]
hence the total time $t\geq 1$ when $Z(t)\geq 4Q_2$ and the first term in \eqref{diff_ineq4} is the smaller one is bounded by
\[
t_{11} := \frac{1}{c\gamma^2}\log\left(\frac{Z(1)}{\|f_0\|_\infty^2}\right)
\leq \frac{1}{c\gamma^2} \log \left( \int_{\mathbb{R}^2} e^H dx \right)
\leq \frac{C}{\gamma}.
\]
Hence, in the $Z(t)\geq 4Q_2$ regime, the presence of the first
term at most introduces a time delay of the order $\gamma^{-1} \leq 1.$

Likewise, when the first term \eqref{diff_ineq4} is smaller and $Z(t) \in [4Q_1, 4Q_2]$,
$Z(t)$ has an exponential decay not slower than $\exp(-c\gamma t)$.
Hence, in this case the time with the first term active is bounded by
\[
t_{12} := \frac{1}{c\gamma}\log\left(\frac{Q_2}{Q_1}\right)\leq \frac{1}{c\gamma} (H(0)-H(r_2)) \leq C.
\]
Thus  the presence of the first term also at most introduces a time delay of order one in this regime.

Finally, in the $Z(t) \in [Z^\sigma, 4Q_1]$ regime and when the first term in \eqref{diff_ineq4} is smaller,
$Z(t)$ has an exponential decay not slower than $\exp(-ct)$. So the time with the first term active is bounded by
\[
t_{13} := \frac{1}{c}\log\left(\frac{Q_1}{Z^\sigma}\right)\leq
\frac{1}{c} \log\left(\frac{\gamma^2}{\sigma}\right) \leq C(1+\log \sigma^{-1} + \log\gamma).
\]
Combining these estimate together, we see
that the total time delay caused by the first term in the mininmum function is bounded by
\[
t_1 := t_{11} + t_{12} + t_{13} = C(1 + \log \sigma^{-1} + \log \gamma).
\]
\end{proof}

\begin{rem}
The appearance of $\log \gamma$ in the definition of $t_1$ is likely not optimal.
In fact, as far as pure transport of the density goes (without estimate on the rate of convergence to equilibrium),
in Section \ref{trancompsec} we outline a different method that yields a bound on transport without extra delay terms.
In the context of convergence to equilibrium estimates, this extra correction
 comes from the $\gamma^2$ factor in $Z_1$ and $Z_2$, which is due to
the $\gamma$ factor in our $L^\infty$ estimate of $\rho$ in Theorem~\ref{thm:L_infty_bound}
\[
\|\rho(\cdot,t)\|_\infty \leq C\gamma \|\rho_0\|_1\hbox{ for all $t\geq 1$.}
\]
Such a bound would be optimal if we had $H=(-\Delta)^{-1}\chi_{B(0,1)}$, and
our argument can also be adapted to this case. But for the weight $e^H$ in (\ref{eH}),
the top is flat and $\|e^H\|_1 \sim \|e^H\|_\infty$, which suggests that there
should not be a $\gamma$ factor, and we should have
\[
\|f(\cdot,t) e^H\|_{L^\infty} \leq C \|f_0 e^H\|_1\hbox{ for $t\geq 1$.}
\]
We can not show this and settle here for the $\log \gamma$ correction that in most situations
is not very significant.

\end{rem}

We now translate the above weighted $L^2$ bounds  to $\rho$.
Let $\rho(x,t)$ be a solution to~\eqref{eq:fp_const} with initial
condition $\rho_0 \in L^\infty(e^{-H}) \cap L^1(\mathbb{R}^2) $. Recall
that
\[
\rho_s := e^H \frac{\int\rho_0 dx}{\int e^H dx},
\]
is a stationary solution to \eqref{eq:fp_const} with the same mass as $\rho$. Also recall
that $Z(t)$ can be written as in
\eqref{def:Y}:
\[
Z(t)= \int_{\mathbb{R}^2} (\rho(x,t) - \rho_s(x))^2 e^{-H(x)} dx,
\]
and that $f(x,t) := \rho(x,t) e^{-H(x)}$ satisfies \eqref{eq:f} with  initial
condition
\[
f_0 = \rho_0 e^{-H} \in L^\infty(\mathbb{R}^2) \cap L^1(e^{H}).
\]
Applying Theorem \ref{thm1} to $f = \rho e^{-H}$, we get an analog of \eqref{eq9}:
\begin{thm}\label{prop:Y}
Let $\rho(x,t)$ be the solution to \eqref{eq:fp_const} with initial
condition $\rho_0 \in L^\infty(e^{-H}) \cap L^1(\mathbb{R}^2)$, $t_1$ be as in
Theorem \ref{thm1}, 
and $Z^\sigma := \sigma e^{-H(0)}\|\rho_0\|_1^2$. Then for all $t\geq t_1$, we have
\begin{equation}\label{eq10}
Z(t)\leq \max\left\{ Z^\sigma, \left(c\gamma (t-t_1)\right)^{-\frac{\gamma-8}{8}}\|\rho_0 e^{-H}\|_\infty^2 \right\}.
\end{equation}
In particular, this implies that $
Z(t)\leq Z^\sigma$ for all
\[
t\geq t_2:= t_1 + \frac{C}{\gamma} \left(\frac{\|\rho_0 e^{-H}\|_\infty}{\sqrt{\sigma}\|\rho_0\|_1} \right)^{\frac{16}{\gamma-8}}.
\]
\end{thm}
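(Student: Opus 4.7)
The plan is to deduce this theorem as an almost immediate corollary of Theorem~\ref{thm1} via the substitution $f(x,t):=\rho(x,t)e^{-H(x)}$, followed by a short algebraic computation to produce the explicit threshold time $t_2$. I would proceed as follows.

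First, I would verify that the change of variable $f=\rho e^{-H}$ reduces the hypotheses and conclusion to exactly the setting of Theorem~\ref{thm1}. A direct computation shows that $f$ solves the dual equation \eqref{eq:f} with initial data $f_0=\rho_0e^{-H}$, that
\[
Z(t)=\int_{\R^2}(f-\bar f)^2 e^{H}\,dx=\int_{\R^2}(\rho-\rho_s)^2 e^{-H}\,dx,
\]
and that the norms transform as $\|f_0\|_\infty=\|\rho_0 e^{-H}\|_\infty$ and $\|f_0 e^H\|_1=\|\rho_0\|_1$. Consequently $Z^\sigma=\sigma e^{-H(0)}\|f_0 e^H\|_1^2$ coincides with the quantity appearing in Theorem~\ref{thm1}, and the membership $f_0\in L^\infty\cap L^2(e^H)$ follows from $\rho_0\in L^\infty(e^{-H})\cap L^1$. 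Applying Theorem~\ref{thm1} then yields \eqref{eq10} verbatim.

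Second, I would derive the bound on $t_2$ by solving for when the second branch in the maximum drops below $Z^\sigma$. One needs
\[
(c\gamma(t-t_1))^{-(\gamma-8)/8}\|\rho_0 e^{-H}\|_\infty^2\le Z^\sigma=\sigma e^{-H(0)}\|\rho_0\|_1^2,
\]
which is equivalent to
\[
t-t_1\ge\frac{1}{c\gamma}\left(\frac{\|\rho_0 e^{-H}\|_\infty^2}{\sigma e^{-H(0)}\|\rho_0\|_1^2}\right)^{8/(\gamma-8)}.
\]
The only subtlety is the factor $e^{-H(0)}$; by \eqref{eH} we have $e^{H(0)}=(e/2)^{\gamma/8}$, so $(e^{H(0)})^{8/(\gamma-8)}=(e/2)^{\gamma/(\gamma-8)}$, which is bounded above by a universal constant for all $\gamma\ge 16$, say. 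Absorbing this constant into $C$ and rewriting $(\,\cdot\,)^{8/(\gamma-8)}=(\sqrt{\,\cdot\,})^{16/(\gamma-8)}$ produces exactly the claimed expression for $t_2$.

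Since Theorem~\ref{thm1} carries all the analytical content, there is no real obstacle here; the only thing to watch is the bookkeeping of the constant $e^{-H(0)}$, which must be shown to be harmless uniformly in $\gamma$ (for large $\gamma$) so it can be absorbed into the universal constant. Everything else is a direct translation under the duality $f=\rho e^{-H}$.
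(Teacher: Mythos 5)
Your proposal is correct and mirrors what the paper does: the paper simply says ``Applying Theorem~\ref{thm1} to $f=\rho e^{-H}$, we get an analog of \eqref{eq9}'' and states the result, leaving the bookkeeping to the reader. You supply exactly the missing details — the identities $\|f_0\|_\infty=\|\rho_0 e^{-H}\|_\infty$, $\|f_0 e^H\|_1=\|\rho_0\|_1$, the verification that $f_0\in L^\infty\cap L^2(e^H)$ via $\int\rho_0^2e^{-H}\,dx\le\|\rho_0e^{-H}\|_\infty\|\rho_0\|_1$, and the algebraic solve for $t_2$ in which $(e^{H(0)})^{8/(\gamma-8)}=(e/2)^{\gamma/(\gamma-8)}$ is bounded uniformly for large $\gamma$ and absorbed into $C$. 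This is the intended argument.
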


On the other hand, once $Z(t)$ 
drops below $Z^\sigma$, the following proposition shows that $\rho(\cdot,t)$ is sufficiently close to $\rho_s$ in $B_{r_1}$.

\begin{prop}\label{prop2}Let $Z^\sigma =\sigma e^{-H(0)} \|\rho_0\|_1^2$, with $\sigma < 1$,
and let $r \leq r_1 = 1/\sqrt{2}.$
If $Z(t) 
\leq AZ^\sigma$, then
\begin{equation}\label{conveq521}
\int_{B_{r}} |\rho(x,t) - \rho_s(x)|dx \leq \sqrt{\pi \sigma A} r \|\rho_0\|_1. 
\end{equation}
Moreover, if we assume in addition that $\rho_0 \geq 0,$ then
\begin{equation}\label{massconc521}
\int_{B_r} \rho(x,t)\,dx \geq \left( 2r^2 - \frac{C}{\sqrt{\gamma}} - r \sqrt{\pi \sigma A} \right)\|\rho_0\|_1.
\end{equation}
\end{prop}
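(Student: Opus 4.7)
The plan is to prove the two estimates in sequence, exploiting the fact that by \eqref{eH} the weight $e^H$ is constant on $B_{r_1}$ with $e^{H(x)} = e^{H(0)} = (e/2)^{\gamma/8}$ for $|x|\le 1/\sqrt{2}$, so that for $r\le r_1$ both $e^H$ and $\rho_s=e^H\|\rho_0\|_1/\!\int e^H\,dx$ are constant on $B_r$.

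For \eqref{conveq521}, I would apply Cauchy--Schwarz on $B_r$ with the splitting $|\rho-\rho_s|=|\rho-\rho_s|e^{-H/2}\cdot e^{H/2}$:
\begin{equation*}
\int_{B_r}|\rho-\rho_s|\,dx \le \Big(\int_{B_r}(\rho-\rho_s)^2 e^{-H}dx\Big)^{1/2}\Big(\int_{B_r} e^H dx\Big)^{1/2}.
\end{equation*}
The first factor is bounded by $\sqrt{Z(t)}\le \sqrt{AZ^\sigma}=\sqrt{A\sigma}\,e^{-H(0)/2}\|\rho_0\|_1$ by hypothesis and the definition of $Z^\sigma$. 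The second factor equals $\sqrt{\pi r^2\,e^{H(0)}}=\sqrt{\pi}\,r\,e^{H(0)/2}$. The two powers of $e^{H(0)}$ cancel, yielding $\sqrt{\pi\sigma A}\,r\,\|\rho_0\|_1$.

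For \eqref{massconc521}, I would use the triangle inequality $\int_{B_r}\rho\ge \int_{B_r}\rho_s-\int_{B_r}|\rho-\rho_s|$ and apply \eqref{conveq521} to the second term. For the first term, since $\rho_s$ is constant on $B_r$, we have $\int_{B_r}\rho_s=\pi r^2 e^{H(0)}\|\rho_0\|_1\big/\!\int e^H dx$, so the problem reduces to an upper bound on the normalization integral. Decompose $\int e^H dx= I_{\mathrm{in}} + I_{\mathrm{ann}} + I_{\mathrm{out}}$ according to $B_{1/\sqrt 2}$, the annulus $B_1\setminus B_{1/\sqrt 2}$, and $B_1^c$. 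Then $I_{\mathrm{in}}=(\pi/2)e^{H(0)}$ exactly, $I_{\mathrm{out}}=\int_{|x|\ge 1}|x|^{-\gamma/4}dx=8\pi/(\gamma-8)$, which is exponentially small compared to $e^{H(0)}$. For $I_{\mathrm{ann}}$ I would apply a Laplace-type estimate: differentiating \eqref{eH} on $[1/\sqrt 2,1]$ gives $H'(1/\sqrt 2)=0$ and $H''(1/\sqrt 2)=-\gamma$, so $H(r)\le H(0)-\gamma(r-1/\sqrt 2)^2/2+O(\gamma(r-1/\sqrt 2)^3)$ and hence $I_{\mathrm{ann}}\le C e^{H(0)}/\sqrt{\gamma}$ by a half-Gaussian computation. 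Combining,
\begin{equation*}
\int e^H dx \le (\pi/2)e^{H(0)}\bigl(1+C/\sqrt{\gamma}\bigr),
\end{equation*}
so $\int_{B_r}\rho_s\ge 2r^2(1+C/\sqrt\gamma)^{-1}\|\rho_0\|_1\ge (2r^2-C'/\sqrt\gamma)\|\rho_0\|_1$, using $r^2\le 1/2$.

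The main technical point is the Laplace estimate of $I_{\mathrm{ann}}$: just bounding $e^H\le e^{H(0)}$ on the annulus would give an $O(1)$ multiplicative error in the normalization, losing the sharp leading constant $2r^2$. Exploiting the quadratic decay of $H$ away from $r=1/\sqrt 2$ of rate $\gamma$ is exactly what reduces the annulus contribution to $O(e^{H(0)}/\sqrt\gamma)$ and yields the $C/\sqrt\gamma$ error term in \eqref{massconc521}. The remaining pieces are routine manipulations that I have outlined above.
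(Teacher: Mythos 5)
Your proposal is correct and follows essentially the same route as the paper: Cauchy--Schwarz on $B_r$ (where $e^{-H}$ is constant) for \eqref{conveq521}, and then bounding $\int_{B_r}\rho_s$ from below via the estimate $\int_{(B_{r_1})^c}e^H\,dx\le (C/\sqrt{\gamma})\int_{B_{r_1}}e^H\,dx$. The only difference is cosmetic: the paper leaves the Laplace-type computation of the annulus/tail contributions to $\int e^H\,dx$ as a ``direct computation,'' whereas you carry it out explicitly, and you put the weight into the Cauchy--Schwarz factors rather than stripping it off first; both give the same $\sqrt{\pi\sigma A}\,r\,\|\rho_0\|_1$ bound.
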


\begin{proof}
If $Z(t) 
\leq AZ^\sigma$, the definitions of $Z(t)$ 
and $Z^\sigma$ give
\[
\int_{B_{r}} |\rho - \rho_s|^2 e^{-H} dx \leq \int_{\mathbb{R}^2} \left| \rho -\rho_s \right|^2 e^{-H} dx \leq \sigma A e^{-H(0)} \|\rho_0\|_1^2.
\]
Using the fact that $e^{-H}\equiv e^{-H(0)}$ in $B_{r}$, the above inequality becomes
\[
\int_{B_{r}} |\rho - \rho_s|^2 dx \leq \sigma A\|\rho_0\|_1^2.
\]
Then a direct application of the Cauchy-Schwarz inequality gives \eqref{conveq521}.

A direct computation using \eqref{eH} shows that
\[ \frac{\int_{(B_{r_1})^c} e^{H(x)}\,dx}{\int_{B_{r_1}} e^{H(x)}\,dx} \leq \frac{C}{\sqrt{\gamma}}. \]
Then, if $\rho_0 \geq 0,$ we have, since $\rho_s$ is constant on $B_{r_1}$ and $r\le r_1$:
\[
\int_{B_r} \rho_s(x)\,dx \geq \frac{r^2}{r_1^2}\|\rho_0\|_1
\left( 1- \frac{C}{\sqrt{\gamma}} \right).
\]
Combining this inequality with \eqref{conveq521}, we obtain \eqref{massconc521}.
\end{proof}

The inequality \eqref{massconc521} gives us a way to ensure that much
of the mass of $\rho_1$ has been transported into the support of $\rho_2,$
provided we choose $\sigma$ sufficiently small and $\gamma$ is sufficiently large. However,
as we mentioned above, the weighted $L^2$ decay estimates we have for $Z(t)$ 
lead to bounds on the transport time that are far from the heuristic ones.
We now discuss this issue in more detail and use duality to rectify the situation.

\subsection{Duality and the $L^1$ control}

Theorem \ref{prop:Y} and Proposition \ref{prop2} give us an explicit upper bound for
the time it takes for a large portion of mass to enter $B_{r_1}$, but this is not sufficient
for our application. Let us recap the reason: consider a special case where $\rho_0(x)$
is a bump of mass $M_0$, located at distance $L$ from the origin. In this case,
we have
\[
\|\rho_0 e^{-H}\|_{L^\infty} \sim 
{M_0L^{\gamma/4}}.
\]
Then \eqref{eq10} requires the time
\[
t \sim 1 + \log\gamma + \frac{L^4}{\gamma},
\]
to assure transport of a significant portion of $\rho$ to $B_{r_1},$ which is at odds with
the heuristic bound of the order $L^2/\gamma$.
To get control at a time scale close to heuristic, we employ a duality procedure which is somewhat delicate in our case since we may
have different regimes in differential inequalities. A direct computation leads to the
the following auxiliary duality lemma.
\begin{lem}\label{lemma:duality}
Let $\rho$ and $f$ be solutions to \eqref{eq:fp_const} and \eqref{eq:f} respectively,
with initial conditions~$\rho_0$ and $f_0$, where $\rho_0 \in L^\infty(e^{-H}) \cap L^1(\mathbb{R}^2)$, and $f_0 \in L^\infty(\mathbb{R}^2) \cap L^1(e^H)$, and set
\[
\rho_s(x) := \frac{e^H(x) \int \rho_0 dx}{\int e^H dx},~~
\bar f := \frac{\int f_0 e^H dx}{\int e^H dx}.
\]
Then, for any $t> 0$ and $s\in[0,t]$, the integral
\begin{equation}\label{dual526}
\int_{\mathbb{R}^2} (\rho(x,s)-\rho_s(x)) (f(x,t-s)- \bar f) dx
\end{equation}
does not depend on $s$ for all $s\in[0,t]$.
\end{lem}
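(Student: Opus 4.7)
The plan is to show that the derivative with respect to $s$ of the integral in \eqref{dual526} vanishes identically on $[0,t]$. Write
\[
F(s) := \int_{\mathbb{R}^2} (\rho(x,s) - \rho_s(x))\,(f(x,t-s) - \bar f)\,dx,
\]
and note that $\rho_s$ and $\bar f$ are both independent of $s$. Recall the operators
\[
L\phi = -\Delta \phi + \nabla\cdot(\phi \nabla H), \qquad L^*\psi = -\Delta \psi - \nabla H \cdot \nabla \psi,
\]
so that $\rho$ satisfies $\partial_s \rho = -L\rho$ and $f$ satisfies $\partial_t f = -L^* f$. In particular, with $\tau = t-s$, one has $\partial_s\bigl[f(x,t-s)\bigr] = -\partial_\tau f(x,\tau) = L^* f(x,t-s)$.

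Next I would differentiate $F$ under the integral sign and substitute the equations:
\[
F'(s) = -\int_{\mathbb{R}^2} (L\rho)(x,s)\,(f(x,t-s) - \bar f)\,dx + \int_{\mathbb{R}^2} (\rho(x,s) - \rho_s(x))\,L^* f(x,t-s)\,dx.
\]
Here the key observations are that $L\rho_s = 0$ since $\rho_s$ is a multiple of $e^H$ (the ground state of $L$), and $L^* \bar f = 0$ since $\bar f$ is a constant (the ground state of $L^*$). Consequently $L\rho = L(\rho - \rho_s)$ and $L^* f = L^*(f - \bar f)$, so
\[
F'(s) = -\int_{\mathbb{R}^2} L(\rho - \rho_s)(f-\bar f)\,dx + \int_{\mathbb{R}^2}(\rho - \rho_s)\,L^*(f - \bar f)\,dx.
\]
The final step is the standard adjoint relation $\int (L\phi)\psi\,dx = \int \phi (L^* \psi)\,dx$, proved by two integrations by parts:
\[
\int \bigl(-\Delta \phi + \nabla\cdot(\phi \nabla H)\bigr)\psi\,dx = \int \phi\bigl(-\Delta \psi - \nabla H \cdot \nabla \psi\bigr)\,dx,
\]
valid whenever the boundary terms at infinity vanish. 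Applying this with $\phi = \rho - \rho_s$ and $\psi = f - \bar f$ makes the two integrals above equal, and hence $F'(s) \equiv 0$.

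The main obstacle is purely technical: justifying differentiation under the integral and, more importantly, the vanishing of boundary terms in the integration by parts. Since $H$ grows only logarithmically, $e^H$ and $e^{-H}$ are polynomial, and one needs to verify that $\rho - \rho_s$, $f - \bar f$, together with their first derivatives, decay fast enough so that the flux integrals over $\partial B_R$ tend to zero as $R \to \infty$. This follows from the global regularity estimates in Theorem~\ref{thm:L_infty_bound} and the assumed $L^\infty \cap L^1$ integrability of the initial data: the parabolic smoothing gives polynomial control of $\rho$ in $L^\infty$ together with spatial decay inherited from the fast decay of $\rho_0$ and $f_0$, which is enough to truncate against a cutoff $\chi_R$ and pass to the limit $R\to \infty$ in a routine way. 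Once this is handled, $F'\equiv 0$ yields that $F$ is constant on $[0,t]$, which is the claim.
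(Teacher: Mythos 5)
Your proposal is correct and follows essentially the same route as the paper: differentiate the pairing in $s$, substitute the two evolution equations, use that $\rho_s$ and $\bar f$ are in the kernels of $L$ and $L^*$, and then invoke the $L^2$-adjoint relation (integration by parts / divergence theorem) to cancel the two resulting terms. The paper compresses these steps into ``one can check that $T_1 = T_2$ by the divergence theorem'' and dismisses the decay and regularity issues with a one-line appeal to standard approximation, whereas you spell out the adjointness and the boundary-term justification more explicitly, but there is no substantive difference.
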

{Note that the term $\bar f$ in the right side can always be dropped
since
\[
\int \rho(x,s)dx=\int\rho_s(x)dx,
\]
for all $s\ge 0$.}
\begin{proof}
By standard approximation arguments, it suffices to show the result for smooth, sufficiently quickly decaying $\rho,$ $f.$
Denote the integral in \eqref{dual526} by $U(s)$. Taking the derivative in $s$ gives
\[
\begin{split}
&\frac{d}{ds}U(s)= \int_{\mathbb{R}^2} \partial_t \rho (x,s) (f(x,t-s)-\bar f) dx -  \int_{\mathbb{R}^2} (\rho(x,s)-\rho_s(x)) \partial_t f(x,t-s) dx =: T_1 - T_2,
\end{split}
\]
where
\[
T_1 =  \int_{\mathbb{R}^2}  (\Delta \rho(x,s) - \nabla\cdot (\rho(x,s) \nabla H)) (f(x,t-s)-\bar f) dx,
\]
and
\[
T_2 = \int_{\mathbb{R}^2} (\rho(x,s)-\rho_s(x)) (\Delta f(x,t-s) + \nabla f(x,t-s) \cdot \nabla H)dx.
\]
Now one can check that $T_1=T_2$ by the divergence theorem (using, in particular, that $\rho_s$ and $\bar f$
are eigenfunctions of $\Delta-\nabla (\cdot \nabla H)$ and $\Delta + \nabla H \nabla,$ respectively, with zero eigenvalue.
\end{proof}
%
We can now prove the following theorem.
\begin{thm}\label{thm:l1}
Fix any $0<r \leq r_1.$  For all $\sigma \in (0,1)$, let $t_1$ be
as in Theorem \ref{thm1}. 
Define
\begin{equation}\label{t3}
t_3:=C\left( t_1+ \frac{1}{\gamma}\left(\frac{\|\rho_0 e^{-H}\|_\infty}{\sigma \|\rho_0\|_1} \right)^{\frac{8}{\gamma-8}} \right),
\end{equation}
with some sufficiently large constant $C$ that will be fixed in the proof.
Then, for all $t\geq t_3$, we have
\begin{equation}\label{eq:L1bound}
\int_{B_{r}} |\rho(x,t) - \rho_s(x)| dx \leq (4\sqrt{\sigma}r+4\sigma) \|\rho_0\|_1.
\end{equation}
In particular, if $\sigma$ is chosen to be sufficiently small, $\gamma$ is sufficiently large, and $\rho_0 \geq 0,$ then we have
\begin{equation}\label{massin521}
\int_{B_{r}} \rho(x,t) dx \geq (2 r^2- 0.1) \|\rho_0\|_1,~~
\hbox{ for all $t\geq t_3$.}
\end{equation}
\end{thm}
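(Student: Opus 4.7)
The plan is to use the duality of Lemma \ref{lemma:duality} to convert the weighted $L^2$ decay estimate of Theorem \ref{prop:Y} into a genuine $L^1$ estimate in the ball $B_r$. The reason this gives the improvement from $t_2$ to $t_3$ is the standard duality philosophy: an $L^\infty \to L^1$ decay of the kernel is obtained from composing an $L^\infty \to L^2(e^{-H})$ bound with its dual $L^2(e^{-H}) \to L^1$ bound, so in the polynomial envelope $t^{-(\gamma-8)/8}$ we pay only half the exponent of $\delta := \|\rho_0 e^{-H}\|_\infty/\|\rho_0\|_1$ compared to Theorem~\ref{prop:Y} alone.

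Concretely, I would test against $\phi \in L^\infty(\R^2)$ with $\|\phi\|_\infty \le 1$ and $\mathrm{supp}\,\phi\subset B_r$, let $f$ solve the dual equation~\eqref{eq:f} with $f(\cdot,0)=\phi$, and split at $s = t/2$ in Lemma~\ref{lemma:duality}:
\[
\int_{\R^2}(\rho(x,t)-\rho_s(x))\phi(x)\,dx
= \int_{\R^2}(\rho(x,t/2)-\rho_s(x))(f(x,t/2)-\bar f)\,dx.
\]
Cauchy--Schwarz with paired weights $e^{-H/2}$ and $e^{H/2}$ then gives the clean upper bound $\sqrt{Z(t/2)\,Z_f(t/2)}$, where $Z_f$ is defined as in~\eqref{dzdt} but for $f$. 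Since $r\le r_1$ and $e^H\equiv e^{H(0)}$ on $B_{r_1}$, one has $\|\phi e^H\|_1 \le \pi r^2 e^{H(0)}$ and $\|\phi\|_\infty \le 1$, so Theorem~\ref{thm1} applied to $f$ and Theorem~\ref{prop:Y} applied to $\rho$ yield
\begin{align*}
Z(t/2) &\le Z^\sigma + T(t/2), \quad T(s):=(c\gamma(s-t_1))^{-(\gamma-8)/8}\|\rho_0 e^{-H}\|_\infty^2,\\
Z_f(t/2) &\le Z_f^\sigma + T_f(t/2), \quad T_f(s):=(c\gamma(s-t_1))^{-(\gamma-8)/8},
\end{align*}
with $Z^\sigma=\sigma e^{-H(0)}\|\rho_0\|_1^2$ and $Z_f^\sigma\le \sigma\pi^2 r^4 e^{H(0)}$.

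Expanding $\sqrt{(Z^\sigma{+}T)(Z_f^\sigma{+}T_f)}$ by subadditivity of the square root gives four pieces. The pure-equilibrium piece is $\sqrt{Z^\sigma Z_f^\sigma}\le \sigma\pi r^2\|\rho_0\|_1\le 4\sqrt{\sigma}\,r\|\rho_0\|_1$, which furnishes the $\sqrt\sigma\,r$ part of \eqref{eq:L1bound}. The product of the two polynomial tails is
\[
\sqrt{T(t/2)T_f(t/2)}=(c\gamma(t/2-t_1))^{-(\gamma-8)/8}\|\rho_0 e^{-H}\|_\infty\|\phi\|_\infty,
\]
and here is where $t_3$ enters: the definition~\eqref{t3} with $C$ large ensures $(c\gamma(t/2-t_1))^{(\gamma-8)/8}\ge \delta/\sigma$, so this piece is $\le \sigma\|\rho_0\|_1$, giving the $\sigma$ part of \eqref{eq:L1bound}. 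The two cross pieces $\sqrt{Z^\sigma T_f}$ and $\sqrt{T\,Z_f^\sigma}$ are controlled by the geometric-mean inequality $\sqrt{ab}\le \tfrac12(\epsilon a+\epsilon^{-1}b)$ applied with a suitable $\epsilon$, reducing them to the two already-bounded diagonal pieces. Taking the supremum over admissible $\phi$ yields $\int_{B_r}|\rho(\cdot,t)-\rho_s|\,dx\le 4(\sqrt\sigma r+\sigma)\|\rho_0\|_1$.

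For \eqref{massin521}, I would observe that $\rho_s\equiv e^{H(0)}\|\rho_0\|_1/\!\int e^H$ on $B_{r_1}$, so $\int_{B_r}\rho_s(x)\,dx=\pi r^2 e^{H(0)}\|\rho_0\|_1/\!\int e^H$, and use the ratio estimate $\int_{B_{r_1}^c}e^H/\int_{B_{r_1}}e^H\le C/\sqrt\gamma$ (already noted in the proof of Proposition~\ref{prop2}) together with $r_1^2=1/2$ to obtain $\int_{B_r}\rho_s\,dx\ge 2r^2(1-C/\sqrt\gamma)\|\rho_0\|_1$. Combining with \eqref{eq:L1bound} via $\int_{B_r}\rho\ge \int_{B_r}\rho_s-\int_{B_r}|\rho-\rho_s|$ and choosing $\sigma$ small enough and $\gamma$ large enough to absorb $4\sqrt\sigma r+4\sigma+C/\sqrt\gamma\le 0.1$ yields \eqref{massin521}.

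The main obstacle I expect is the cross piece $\sqrt{T(t/2)\,Z_f^\sigma}$, which carries both the potentially large factor $\|\rho_0 e^{-H}\|_\infty$ from the primal tail and $e^{H(0)/2}\sim (e/2)^{\gamma/16}$ from $\sqrt{Z_f^\sigma}$: to bound it by $C(\sqrt\sigma r+\sigma)\|\rho_0\|_1$ one must exploit both the $t_3$-bound on $T(t/2)$ and the exponential smallness of $(c\gamma(t/2-t_1))^{-(\gamma-8)/16}$, choosing the constant $C$ in \eqref{t3} large enough that the latter dominates $r^2e^{H(0)/2}\sqrt\delta$ uniformly in the parameter regime where the bound is meaningful.
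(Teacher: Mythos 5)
Your overall plan — test with $\phi$ supported in $B_r$, split at the midpoint via
Lemma~\ref{lemma:duality}, Cauchy--Schwarz with paired weights $e^{\pm H/2}$, and feed in
the weighted $L^2$ decay of Theorems~\ref{thm1} and~\ref{prop:Y} — is structurally close
to the paper's argument, but it hits a genuine wall on the cross term
$\sqrt{T(t/2)\,Z_f^\sigma}$, and the fix you propose does not close the gap. Using $r\le r_1$
and $e^H\equiv e^{H(0)}$ on $B_{r_1}$ one has $Z_f^\sigma\le \sigma \pi^2 r^4 e^{H(0)}$, so with
$\delta:=\|\rho_0 e^{-H}\|_\infty/\|\rho_0\|_1$ and $t=t_3$ as in~\eqref{t3} a direct computation
gives
\[
\sqrt{T(t_3/2)\,Z_f^\sigma}\;\lesssim\;
\big(cC\big)^{-\frac{\gamma-8}{16}}\,\sigma\,\sqrt{\delta}\,r^2\,e^{H(0)/2}\,\|\rho_0\|_1 ,
\]
and you need this to be $\lesssim (\sqrt\sigma\,r+\sigma)\|\rho_0\|_1$. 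Since
$e^{H(0)/2}=(e/2)^{\gamma/16}$, the factor $(cC)^{-(\gamma-8)/16}$ can absorb the exponential in
$\gamma$ only if $C$ is allowed to exceed a universal multiple of $e/2$, which is fine; but
the leftover $\sqrt\delta$ is \emph{not} killed by any choice of a universal constant $C$. In
the regime of the paper's main application $\delta\sim L^{\gamma/4}$, so one would need
$cC\gtrsim L^2$, that is, $C$ would have to depend on $L$. Also, the reduction you invoke --
``the two cross pieces are controlled by the geometric-mean inequality, reducing them to the
already-bounded diagonal pieces'' -- is not valid: $\sqrt{ab}\le\frac12(\varepsilon a+\varepsilon^{-1}b)$
bounds $\sqrt{T\,Z_f^\sigma}$ by a linear combination of $T$ and $Z_f^\sigma$, not by
$\sqrt{Z^\sigma Z_f^\sigma}$ and $\sqrt{T\,T_f}$; in fact $\sqrt{T\,Z_f^\sigma}
=(T\,T_f)^{1/4}(Z^\sigma Z_f^\sigma)^{1/4}\cdot(T/T_f)^{1/4}(Z_f^\sigma/Z^\sigma)^{1/4}$, and the
last two factors contribute precisely the unbounded quantity $\sqrt{\delta\,e^{H(0)}}\,r$ that
you were trying to avoid.

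The paper sidesteps this with an extra structural step that your proposal is missing. Instead
of a single split at the midpoint, it applies the duality formula at an intermediate time
$2t_3/3$, then uses Lemma~\ref{lemma:duality_2} to split
$\rho(\cdot,2t_3/3)-\rho_s=G_1+G_2$ with $\|G_1 e^{-H}\|_\infty\le 2\alpha$ and
$\|G_2\|_1\le 2\beta$, where crucially $\alpha$ already carries the factor
$\sigma^{1/2}e^{-H(0)/2}$ in~\eqref{ineq_rho_temp}--\eqref{estimate_G1}. The $G_2$ part is simply
transported because the $L^1$ norm is non-increasing under the flow; the $G_1$ part is
evolved for an additional $t_3/3$ and hit once more with the weighted $L^2$ decay of
Theorem~\ref{prop:Y}, but now with the small $L^\infty(e^{-H})$ bound $2\alpha$ in place of
$\|\rho_0 e^{-H}\|_\infty$, which turns the would-be cross term into
$2\sigma^{3/2}e^{-H(0)/2}\|\rho_0\|_1=2\sigma\sqrt{Z^\sigma}$ and lets
Proposition~\ref{prop2} close the argument. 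That two-stage decomposition is exactly what allows
the exponential $e^{H(0)}$'s to be distributed so that they cancel, and what keeps $C$
universal. You should keep your Cauchy--Schwarz idea for the first (duality) step, but
you need the intermediate $L^\infty(e^{-H})$ / $L^1$ splitting of Lemma~\ref{lemma:duality_2}
and a second bootstrap in time; without it the cross term is a real gap.

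Your final step for~\eqref{massin521} (integrating $\rho_s$ over $B_r$ using the flat top of
$e^H$ and the ratio $\int_{B_{r_1}^c}e^H/\int_{B_{r_1}}e^H\le C/\sqrt\gamma$) is correct and
matches the paper.
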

\begin{proof}
Consider first what happens at the time $t_3/3$.
If $Z(t_3/3)$ 
drops below $ Z^\sigma = \sigma e^{-H(0)} \|\rho_0\|_1^2$, we are done due to
Proposition \ref{prop2}. Otherwise, we have the second bound in \eqref{eq10} for $Z(t_3/3)$
with~$t=t_3/3$.
To obtain a better $L^1$ control of $\rho(\cdot,t_3)-\rho_s$ in the latter case,
we use the following duality argument.
For any $f_0 \in L^\infty(\mathbb{R}^2) \cap L^1(e^H)$, let $f(x,t)$ be the solution to
the dual equation~\eqref{eq:f} with initial condition $f_0$.  Applying
Lemma \ref{lemma:duality} with $t=2t_3/3$,  $s={2t_3}/{3}$
and then $s={t_3}/{3}$, we obtain
\begin{equation}\label{eq:rho_f}
 \int_{\mathbb{R}^2} \left(\rho\left(x,\frac{2t_3}{3}\right)-\rho_s(x)\right) f_0(x)dx =\int_{\mathbb{R}^2} \left(\rho\left(x,\frac{t_3}{3}\right)- \rho_s\right) \left(f\left(x,\frac{t_3}{3}\right)-\bar f\right) dx.
\end{equation}
We dropped the term involving $\bar f$ in the left side using the remark after
Lemma~\ref{lemma:duality}.
We can then bound the left side in (\ref{eq:rho_f})  as
\begin{equation}\label{ineq_rho_temp}
\begin{split}
&\left| \int_{\mathbb{R}^2} \left(\rho\left(x,\frac{2t_3}{3}\right)-\rho_s(x)\right) f_0(x)dx\right| \leq \left\|\rho\left(\frac{t_3}{3}\right) - \rho_s\right\|_{L^2(e^{-H})} \left\|f\left(\frac{t_3}{3}\right) - \bar f\right\|_{L^2(e^H)} \\
&\leq  \left\|\rho\left(\frac{t_3}{3}\right) - \rho_s\right\|_{L^2(e^{-H})} \max\left\{ (\sigma e^{-H(0)}\|f_0 e^H\|_1^2)^{1/2}, \left(c\gamma \left(\frac{t_3}{3}-t_1\right)\right)^{-\frac{\gamma-8}{16}}\|f_0\|_\infty \right\} \quad\text{(by Theorem \ref{thm1})}\\
&\leq  \left(c\gamma \left(\frac{t_3}{3}-t_1\right)\right)^{-\frac{\gamma-8}{16}}\|\rho_0 e^{-H}\|_\infty  \left(\sigma^{\frac12} e^{-\frac{H(0)}{2}} \|f_0 e^H\|_1 +  \left(c\gamma \left(\frac{t_3}{3}-t_1\right)\right)^{-\frac{\gamma-8}{16}}\|f_0\|_\infty \right) \quad\text{(by Theorem \ref{prop:Y})}\\
&\leq \alpha \|f_0 e^H\|_1 + \beta \|f_0\|_\infty,
\end{split}
\end{equation}
where
\[
\alpha := \left(c\gamma \left(\frac{t_3}{3}-t_1\right)\right)^{-\frac{\gamma-8}{16}}  \sigma^{\frac{1}{2} }e^{-\frac{H(0)}{2}}\|\rho_0 e^{-H}\|_\infty,
~~~\beta := \left(c\gamma \left(\frac{t_3}{3}-t_1\right)\right)^{-\frac{\gamma-8}{8}}\|\rho_0 e^{-H}\|_\infty.
\]

Now let us apply the following lemma, the proof of which is postponed till the end of this subsection.
\begin{lem}\label{lemma:duality_2}
Suppose that for some function $G \in L^\infty(e^{-H}) \cap L^1(\mathbb{R}^2)$, there
exist~$\alpha, \beta>0$ such that
\begin{equation}\label{eq:FG}
\left |\int_{\mathbb{R}^2} G(x)f(x) dx\right | \leq \alpha \|fe^H\|_1 + \beta \|f\|_\infty \quad\text{ for all } f \in L^\infty(\mathbb{R}^2) \cap L^1(e^H).
\end{equation}
Then $G$ can be decomposed as $G = G_1 + G_2$, where $G_1, G_2 \in L^\infty(e^{-H}) \cap L^1(\mathbb{R}^2)$ satisfy the estimates $\|G_1 e^{-H}\|_\infty \leq 2\alpha$, $\|G_2\|_1 \leq 2\beta$.
\end{lem}

Applying this lemma to \eqref{ineq_rho_temp}, we can decompose
\[
\rho\left(x, \frac{2t_3}{3}\right) - \rho_s(x) = G_1(x) + G_2(x),
\]
where
\begin{equation}\label{estimate_G1}
\|G_1 e^{-H}\|_\infty \leq 2\alpha = 2 \left(c\gamma \left(\frac{t_3}{3}-t_1\right)\right)^{-\frac{\gamma-8}{16}}  \sigma^{\frac{1}{2} }e^{\frac{-H(0)}{2}}\|\rho_0 e^{-H}\|_\infty ,
\end{equation}
and
\[
\|G_2\|_1 \leq 2\beta = 2   \left(c\gamma \left(\frac{t_3}{3}-t_1\right)\right)^{-\frac{\gamma-8}{8}}\|\rho_0 e^{-H}\|_\infty \leq 2\sigma \|\rho_0\|_1,
\]
where the last inequality comes from choosing a sufficiently large universal constant $C$ in the definition \eqref{t3} of $t_3$.

Let $\zeta_1(x,t)$ and $\zeta_2(x,t)$ denote the solutions to \eqref{eq:fp_const}
starting at $t=2t_3/3$ with initial conditions
$\zeta_1(\cdot,{2t_3}/{3}) = G_1$, $\zeta_2(\cdot, {2t_3}/{3}) = G_2$, respectively.
Since \eqref{eq:fp_const} is linear, we have
\[
\rho(\cdot,t_3) - \rho_s = \zeta_1(\cdot,t_3) + \zeta_2(\cdot,t_3).
\]
Note that $\|\zeta_2(\cdot,t)\|_1$ is non-increasing in time, hence
\begin{equation}\label{eq:rho2_estimate}
\|\zeta_2(\cdot, t_3)\|_1 \leq \|G_2\|_1 \leq 2\sigma \|\rho_0\|_1.
\end{equation}

To control $\zeta_1(\cdot,t_3)$, set
\[
\zeta_1^s := e^H \frac{\int G_1 dx}{\int e^H dx}.
\]
By Theorem~\ref{prop:Y}, we have
\begin{equation}\label{wl2521}
\|\zeta_1(\cdot,t_3) - \zeta_1^s\|_{L^2(e^{-H})} \leq  \max\left\{ \sigma^{1/2} e^{-\frac{H(0)}{2}} \|G_1\|_1, \left(c\gamma \left(\frac{t_3}{3}-t_1\right)\right)^{-\frac{\gamma-8}{16}}\|G_1 e^{-H}\|_\infty \right\}
\end{equation}
If the first term in the max function is larger, using the fact that
\[
\|G_1\|_1 \leq \|\rho\|_1 + \|\rho_s\|_1 + \|G_2\|_1\leq 3\|\rho_0\|_1,
\]
we obtain
\[
\|\zeta_1(\cdot,t_3) - \zeta_1^s\|_{L^2(e^{-H})} \leq 3\sigma^{1/2} e^{-\frac{H(0)}{2}} \|\rho_0\|_1 \leq 3\sqrt{Z^\sigma}.
\]
And if the second term is larger, combining \eqref{wl2521} with \eqref{estimate_G1}, we get
\[
\|\zeta_1(\cdot,t_3) - \zeta_1^s\|_{L^2(e^{-H})} \leq 2   \left(c\gamma \left(\frac{t_3}{3}-t_1\right)\right)^{-\frac{\gamma-8}{8}}\sigma^{\frac{1}{2} }e^{-\frac{H(0)}{2}}\|\rho_0 e^{-H}\|_\infty \leq 2\sigma^{3/2}e^{-\frac{H(0)}{2}} \|\rho_0\|_1 = 2\sigma\sqrt{Z^\sigma}.
\]
In both cases, applying Proposition \ref{prop2} yields
\[
\int_{B_{r}} |\zeta_1(x,t_3) - \zeta_1^s(x)|dx \leq 4 \sqrt{\sigma} r \|\rho_0\|_1.
\]
Finally, combining the above estimate with \eqref{eq:rho2_estimate}, we have
\[
\begin{split}
\|\rho(t_3) - \rho_s\|_{L^1(B_{r})} &\leq \|\zeta_1(t_3) - \zeta_1^s\|_{L^1(B_{r})} +  \|\zeta_2(t_3) + \zeta_1^s\|_{L^1(B_{r})}\\
& \leq \|\zeta_1(t_3) - \zeta_1^s\|_{L^1(B_{r})} +  \|\zeta_2(t_3)\|_1 + \|G_2\|_1\\
&\leq (4\sqrt{\sigma}r+4\sigma)\|\rho_0\|_1,
\end{split}
\]
where in the second inequality we used the mean zero property
\[
\int (G_1+G_2)\,dx=0,
\]
which gives
\[
\|\zeta_1^s\|_1 = \Big|\int G_1 dx\Big| = \Big|\int G_2 dx\Big|.
\]
The above argument shows that \eqref{eq:L1bound} holds at $t=t_3$. For $t>t_3$, the same argument works by replacing $t_3$ with $t$.

The estimate \eqref{massin521} follows from a simple computation similar to that
in the proof of \eqref{massconc521}.
\end{proof}

\begin{proof}[Proof of Lemma \ref{lemma:duality_2}]
Let $S = \{x : |G(x)| \geq 2\alpha e^H\}$, and define $G_1 := G \chi_{S^c}(x)$, so
that
\[
\|G_1 e^{-H} \|_\infty \leq 2\alpha.
\]
To show that $G_2 := G - G_1 = G\chi_S(x)$, satisfies $\|G_2\|_1 \leq 2\beta$,
we use (\ref{eq:FG}) with $f= (\text{sgn}\,G) \chi_S$:
\[
\begin{split}
\|G_2\|_{L^1(\mathbb{R}^2)} = \int_S G_2 f dx \leq \alpha \int_S e^H dx + \beta \leq \frac{1}{2}\|G_2\|_{1} + \beta,
\end{split}
\]
and the proof is complete. 
\end{proof}

\section{Transport estimates based on comparison principles}\label{trancompsec}

In this section we take a quick detour to provide a simple alternative
proof that a significant portion of the initial mass of $\rho_0$ gets transported
inside a certain ball of radius less than $1$ under the action of the potential $H$ in time $\tau \sim L^2/\gamma.$
As we mentioned in the introduction,
this result can be used to obtain a simpler proof of a result similar to Theorem~\ref{main1} if one is willing to compromise and settle
for an estimate that provides little information on the closeness to ground state.


The main step is the analysis of the dual equation \eqref{eq:f}. Recall that the dual operator $L^*$ is given by
\[
L^*f = -\Delta f - \nabla H \cdot \nabla f,
\]
and the dual evolution by
\begin{equation} \label{dual216}
\partial_t f = \Delta f + \nabla H \cdot \nabla f = -L^*f.
\end{equation}
We will prove the following theorem.
\begin{thm}\label{trantheor}
Let $f(x,t)$ solve \eqref{dual216} with $H$ given by \eqref{H:explicit}.
Suppose that the radial initial data $f_0 \in C_0^\infty$ satisfies $1 \geq f \geq 0,$ $f$ non-increasing in the radial direction,
and $f_0(x) \geq \chi_{B_{d_1}}(x)$
where $1 \geq d_1 > r_1 = 1/\sqrt{2}.$ Then there exist a constant $c>0$ such that for all sufficiently large $\gamma$ we have
\begin{equation}\label{flb216}
f(x,t) \geq c \chi_{B_{c \sqrt{1+\gamma t}}}(x)
\end{equation}
for all $t \geq 0.$
\end{thm}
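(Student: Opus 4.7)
The plan is to construct an explicit radial subsolution $\underline f$ of the dual equation \eqref{dual216} whose support spreads outward at rate $\sqrt{\gamma t}$, and then apply the parabolic comparison principle. The key structural observation is that, in the variable $s = r^2$, the radial dual equation in the outer region $r \geq 1$ (where $H(r) = -(\gamma/4)\log r$) reduces to $\partial_t f = 4s\, \partial_{ss} f + (4 - \gamma/2) \partial_s f$. For $\gamma \geq 8$ the drift coefficient is negative, so characteristics of the pure transport equation move outward in $s$ at speed $\gamma/2 - 4$, matching the heuristic that features near the origin propagate to radius $r \sim \sqrt{\gamma t}$ in time $t$.

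Motivated by this, I would take $R^2(t) := d_1^2 + (\gamma/2)t$ and consider the ansatz $\underline f(r, t) := c_0 \bigl(1 - (r^2 - 1)/(R^2(t) - 1)\bigr)_+^2$ for $r \geq 1$, with a small constant $c_0 \in (0, 1]$. A direct substitution reduces the subsolution condition $\partial_t \underline f - 4s\,\partial_{ss}\underline f - (4-\gamma/2)\partial_s \underline f \leq 0$ to a polynomial inequality in $\tau := (r^2 - 1)/(R^2(t) - 1) \in [0, 1]$ whose maximum is negative once $\gamma \geq 8$, thanks to the favorable sign of the $\gamma/2$ drift term and the matching choice of $R^2$. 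Thus $\underline f$ is a classical subsolution of the dual equation in $\{r \geq 1\}$, and the initial condition $\underline f(\cdot, 0) \leq f_0$ holds because $\underline f(r, 0)$ is supported in $[1, d_1]$ and bounded above by $c_0 \leq 1 \leq f_0(r)$ on this interval (using the hypothesis $f_0 \geq \chi_{B_{d_1}}$).

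To apply comparison on the cylinder $\{r \geq 1\} \times [0, T]$, the remaining ingredient is the boundary bound $f(1, t) \geq c_0$ for all $t \geq 0$. This is the heart of the argument: I would establish it by combining (i) the conservation $\int f\,e^H\,dx = \int f_0\,e^H\,dx \geq \int_{B_{d_1}} e^H\,dx$; (ii) the sharp concentration of $e^H$ on the plateau region $B_{r_1}$ for large $\gamma$, which forces $f$ to be close to its supremum on most of $B_{r_1}$; (iii) the preserved radial monotonicity of $f(\cdot, t)$ (inherited from the hypothesis on $f_0$), which upgrades the mass bound to a pointwise bound at the origin; and (iv) a parabolic regularity argument on the bounded region $B_1$ to transfer the pointwise lower bound from $r = 0$ to $r = 1$. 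Given these, comparison yields $f \geq \underline f$ in $\{r \geq 1\}$, and taking $r^2 \leq (R^2(t) + 1)/2$ gives $\underline f \geq c_0/4$, producing \eqref{flb216} after combining with the bound on $B_1$ coming from monotonicity and step (iv). The main obstacle is step (iv): getting a lower bound on $f(1, t)$ that is uniform in both time and $\gamma$, since $|\nabla H|$ scales like $\gamma$ on the annulus $[r_1, 1]$ and could spoil uniform constants in a naive Harnack-type argument; the plateau structure of $H$ on $B_{r_1}$, where the equation reduces to the heat equation, is what keeps the constants under control.
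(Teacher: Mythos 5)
Your overall plan — build a barrier subsolution and run a parabolic comparison, with the boundary data supplied by a conservation argument — is the same mechanism as the paper's proof. But there is a genuine gap at what you yourself flag as the main obstacle, step~(iv), and the way you propose to close it does not work. By restricting the barrier to $\{r\geq 1\}$ in order to exploit the clean form $H(r)=-(\gamma/4)\log r$ there, you are forced to supply a lower bound $f(1,t)\geq c_0$ uniform in $t$ and $\gamma$. The conservation-plus-monotonicity argument only yields a pointwise bound at radii $r\leq d_1$: the crucial inequality $\int_{B_r}fe^H\,dx\leq\int_{B_r}f_0e^H\,dx$ needs $f_0\equiv 1$ on $B_r$, which holds for $r\leq d_1$ but not for $r=1$ when $d_1<1$ (and $\int_{\Rm^2\setminus B_1}f_0e^H\,dx$ can even vanish). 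You then propose to transport the bound from $B_{r_1}$ out to $r=1$ by a parabolic regularity argument and assert that the plateau of $H$ on $B_{r_1}$ ``keeps the constants under control''; that is not so, because the relevant transfer must cross the annulus $r_1\leq r\leq 1$, which is entirely outside the plateau and on which $|\partial_r H|\sim\gamma$. A Harnack estimate there carries constants degenerating in~$\gamma$, so this step does not furnish a uniform $c_0$.

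The paper's proof sidesteps the difficulty entirely by choosing an intermediate radius $d_0$ with $r_1<d_0<d_1$, establishing $f(d_0,t)\geq 1/2$ directly from conservation, radial monotonicity, and the concentration of $e^H$, and then building a family of convex barriers $\omega_{\varphi(t)}$ that are subsolutions on all of $\{r\geq d_0\}$. The subsolution check in the paper does not need the exact outer form of $H$: it relies only on the inequality $-\partial_r H\geq c_0\gamma/r$ for $r\geq d_0$, which is valid throughout $\{r\geq d_0\}$ including the awkward annulus. This is what lets the boundary be placed at $d_0<1$, where the conservation argument already gives the pointwise bound, with no Harnack-type intermediary at all. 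If you want to keep your explicit quadratic ansatz, the natural fix is to re-derive the subsolution inequality using only $-\partial_r H\geq c_0\gamma/r$ (so it works for $r\geq d_0$) and run the comparison on $\{r\geq d_0\}$; you would then also need to repair the minor degeneracy of your ansatz at $t=0$, since with $d_1\leq 1$ one has $R^2(0)-1\leq 0$ and the formula $\bigl(1-(r^2-1)/(R^2(t)-1)\bigr)_+^2$ is not supported in $[1,d_1]$ but rather equals $c_0$ at $r=1$ and exceeds $c_0$ for $r>1$; starting the comparison at a small positive time $t_*$ where $R^2(t_*)>1$ is the standard remedy, but it again pushes the needed boundary information back to the conservation bound.
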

\begin{proof}
Fix some $d_0$ such that $1 \geq d_1 > d_0 > r_1.$ For simplicity, in the argument that follows, we can think for instance of
$d_0=5/7$ and $d_1=6/7,$
but any other choice satisfying the above relationship works as well (the constant $c$ will depend on this choice).
Due to parabolic comparison principles and since $H$ is radial, we have that the solution $f(x,t)$ remains radial, non-increasing in the
radial direction, and satisfies $1 \geq f(x,t) \geq 0$ for all times.

Observe that 
\begin{eqnarray}
\left. f(x,t) \right|_{\S_{d_0}} \int_{\R^2 \setminus B_{d_0}} e^H\,dx \geq \int_{\R^2 \setminus B_{d_0}} f e^H \,dx
\geq \int_{\R^2} f_0 e^H \,dx - \int_{B_{d_0}} fe^H \,dx \geq \nonumber \\ \label{aux216a}
\int_{\R^2 \setminus B_{d_0}} f_0 e^H\,dx \geq \int_{\R^2 \setminus B_{d_0}} \chi_{B_{d_1}} e^H = \int_{B_{d_1} \setminus B_{d_0}} e^H \,dx.
\end{eqnarray}
Here $\S_{d_0}$ is the circle of radius $d_0;$
in the first step we used monotonicity of $f$ in radial variable, in the second step conservation of
$\int f(x,t) e^{H(x)}\,dx$ and in the third step \[ \int_{B_{d_0}} f_0 e^H \,dx  \geq \int_{B_{d_0}} f e^H\,dx \]
due to $1 = f_0(x) \geq f(x,t)$ in $B_{d_0}.$
However, since $|\nabla H(x)| = -\partial_r H \geq c_0 \gamma |x|^{-1}$ if $|x| \geq d_0$,  we have
\[  e^{H(tx)} \leq t^{-c_0 \gamma}e^{H(x)}.      \]
Set $q = d_1/d_0.$ Then
\[ \int_{B_{d_1 q^k} \setminus B_{d_0 q^k}} e^{H(x)} \,dx = q^{kd}\int_{B_{d_1} \setminus B_{d_0}}e^{H(q^k x)} \,dx \leq
q^{k(d-c_0\gamma)} \int_{B_{d_1} \setminus B_{d_0}} e^{H(x)} \,dx. \]
 Therefore,
 \[ \int_{\R^2 \setminus B_{d_0}} e^{H(x)}\,dx \leq 2  \int_{B_{d_1} \setminus B_{d_0}} e^{H(x)} \,dx \]
 for $\gamma$ large enough. In this case from \eqref{aux216a} we conclude that $\left. f(x,t) \right|_{\S_{d_0}} \geq 1/2$ for all times.

Now fix any convex $C^2$ function $\omega$ on $[d_0,\infty)$ such that $\omega(d_0)=1/2$, $\omega(r)>0$ for $r \in [d_0,d_1),$
and $\omega(r)=0$ if $r \geq d_1.$
For $\varphi \in [0,1]$ define $\omega_\varphi(r) = \omega (d_0 + \varphi(r-d_0));$ we will abuse notation by also writing $\omega_\varphi(x)= \omega_\varphi(|x|).$
Note that
\[ L^{*} \omega_\varphi(x) = \omega''_\varphi (r) + \frac1r  \omega'_\varphi(r) + \partial_r H(r) \omega'_\varphi(r)
\geq \frac{c_0\gamma -1}{r}| \omega'_\varphi(r)| \geq \frac{c_0 \gamma}{2r} | \omega'(d_0+\varphi(r-d_0))|\varphi, \]
where we used $\omega''_\varphi(r) \geq 0,$ $\omega'(r)<0,$ and the last step holds if $\gamma$ is sufficiently large.
Choose a decreasing $\varphi(t)$ defined for $t \geq 0$ such that $\varphi(0)=1.$ Consider $F(x,t) = \omega_{\varphi(t)}(x).$ Since we always have
$\left. f(x,t) \right|_{\S_{d_0}} \geq 1/2 = \left. F \right|_{\S_{d_0}}$ and $f_0(x) \geq \chi_{B_{d_1}(x)} \geq \omega(|x|),$
we can be sure that $f(x,t) \geq F(x,t)$ in $\R^2 \setminus B_{d_0}$
for all times if $\partial_t F \leq L^* F.$
However $\partial_t F= (r-d_0) \omega'(d_0+\varphi(r-d_0))\varphi'(t)$ and $\partial_t F = L^*f=0$ if $d_0 +\varphi(r-d_0) \geq 1.$
Hence we just need to check the inequality
\[ -(r-d_0)\varphi'(t) \leq \frac{c_0 \gamma}{2r} \varphi \]
when $r  \leq d_0 + \frac{1-d_0}{\varphi} \leq \frac{1}{\varphi}.$ Thus, it suffices to ensure that
\[ -\frac{1-d_0}{\varphi} \varphi'(t) \leq \frac{c_0\gamma}{2} \varphi^2 \] which would follow from
$\partial_t (1/\varphi^2(t))  \leq c_0 \gamma.$ Therefore
\[ \varphi(t) = \frac{1}{\sqrt{1+c_0\gamma t}} \]
is acceptable.  Now fix a constant $a<d_1-d_0$, then we can make
\[ d_0 + \frac{1}{\sqrt{1+c_0\gamma t}}(r-d_0) \leq d_0+a \]
for $r \leq d_0 +c\sqrt{1+\gamma t}$ by choosing small enough $c.$ In this case, if $d_0 \leq r \leq d_0+c\sqrt{1+\gamma t},$ we have
\[ f(x,t) \geq \omega\left(d_0 + \frac{1}{\sqrt{1+c_0\gamma t}}(r-d_0)\right) \geq \omega(d_0+a) \geq c>0, \]
where we may have to adjust our constant $c$ to make it smaller if necessary.
\end{proof}

Here is the corollary for the behavior of the density $\rho(x,t)$ satisfying \eqref{fokker11}.
\begin{cor}\label{cortran}
Let $\rho(x,t)$ solve \eqref{fokker11} with a potential $H$ given by \eqref{H:explicit}.
Suppose that the initial data $\rho_0$ satisfies $\rho_0(x) \geq 0$ and $\int_{1\leq |x| \leq L} \rho_0(x)\,dx = M_0.$
Then for all sufficiently large $\gamma,$ there exists a constant $C_1$ such that if $t \geq C_1 L^2/\gamma$, we have
\begin{equation}\label{aux217a}
\int_{B_{6/7}} \rho(x,t) \, dx \geq c M_0.
\end{equation}
\end{cor}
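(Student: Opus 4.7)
The plan is to deduce the corollary from Theorem \ref{trantheor} via the duality identity that underlies Lemma \ref{lemma:duality}. The key observation is that if $f$ solves the adjoint equation $\partial_t f = \Delta f + \nabla H \cdot \nabla f$ with initial data $f_0$, then for any $t \geq 0$ integration by parts gives $\frac{d}{ds}\int \rho(x,s) f(x,t-s)\,dx = 0$, so that
\begin{equation*}
\int_{\R^2} \rho(x,t) f_0(x)\,dx \;=\; \int_{\R^2} \rho_0(x) f(x,t)\,dx.
\end{equation*}
Thus a pointwise lower bound on $f(\cdot,t)$ yields a lower bound on a weighted integral of $\rho(\cdot,t)$, and this is precisely what Theorem \ref{trantheor} provides.

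The concrete plan is as follows. First, fix $d_1 \in (1/\sqrt{2},6/7)$, for instance $d_1 = 5/7$, and choose a radial, non-increasing $f_0 \in C_0^\infty(\R^2)$ satisfying $\chi_{B_{d_1}}(x) \leq f_0(x) \leq \chi_{B_{6/7}}(x)$. Second, let $f(x,t)$ be the solution of the dual equation with initial datum $f_0$; by Theorem \ref{trantheor}, for all sufficiently large $\gamma$ there is a universal $c>0$ such that $f(x,t) \geq c\,\chi_{B_{c\sqrt{1+\gamma t}}}(x)$ for every $t\ge 0$. Third, combine the duality identity with the sandwich $f_0\le\chi_{B_{6/7}}$ and $\rho\ge 0$ to obtain
\begin{equation*}
\int_{B_{6/7}} \rho(x,t)\,dx \;\geq\; \int_{\R^2}\rho(x,t) f_0(x)\,dx \;=\; \int_{\R^2}\rho_0(x) f(x,t)\,dx \;\geq\; c\int_{B_{c\sqrt{1+\gamma t}}}\rho_0(x)\,dx.
\end{equation*}

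Fourth, choose $C_1 := 2/c^2$ so that $t \geq C_1 L^2/\gamma$ implies $c\sqrt{1+\gamma t} \geq L$. Then the last integral is at least $c\int_{\{1\le|x|\le L\}} \rho_0\,dx = cM_0$, which is the desired bound \eqref{aux217a} (after renaming the constant $c$).

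The proof is essentially a packaging of Theorem \ref{trantheor}, so there is no serious analytic obstacle. The only points requiring minor care are (i) checking that $f_0$ can be chosen smooth, compactly supported, radial, non-increasing, with $\chi_{B_{d_1}}\le f_0\le \chi_{B_{6/7}}$ so that Theorem \ref{trantheor} applies — which is immediate since $1/\sqrt 2 < 5/7 < 6/7$ — and (ii) justifying the duality identity at our level of regularity, which follows by a standard approximation argument given the global regularity of $\rho$ established in Theorem \ref{globreg1120} together with the boundedness of $f_0$ and decay of $f(\cdot,t)$ away from the compact support of $f_0$ evolved by a bounded-gradient drift.
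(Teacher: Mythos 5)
Your proof is correct and follows essentially the same route as the paper: choose a radial, non-increasing $f_0 \in C_0^\infty(B_{6/7})$ sandwiched between $\chi_{B_{d_1}}$ and $\chi_{B_{6/7}}$ with $d_1 \in (1/\sqrt{2},6/7)$, apply the duality identity $\int \rho(x,t) f_0\,dx = \int \rho_0 f(x,t)\,dx$, and invoke the lower bound on $f$ from Theorem~\ref{trantheor}. The only difference is that you spell out the explicit choice $C_1 = 2/c^2$ and the inequality chain a bit more fully than the paper does.
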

\it Remark. \rm For simplicity, we picked a fixed constant as a radius of the ball in \eqref{aux217a}. It is not hard to run
the argument for an arbitrary radius greater than $1/\sqrt{2},$ but then all constants and the range of validity in $\gamma$
will depend on the choice of radius.
\begin{proof}
Choose $d_0=5/7$ and $d_1$ so that $d_0 <d_1 < 6/7.$
Take $f_0 \in C_0^\infty(B_{6/7})$ as in Theorem~\ref{trantheor}.
Due to duality, we have
\[ \int_{\R^2} f_0(x) \rho(x,t)\,dx = \int_{\R^2} f(x,t) \rho_0(x)\,dx. \]
Therefore, applying Theorem~\ref{trantheor} we find that if $C_1$ is sufficiently large then
\[  \int_{B_{6/7}} \rho(x,t) \, dx \geq  \int_{\R^2} f_0(x) \rho(x,t)\,dx  = \int_{\R^2} f(x,t) \rho_0(x)\,dx
\geq c M_0. \]
\end{proof}

Corollary~\ref{cortran} and \eqref{aux217a} can take place of Theorem~\ref{thm:l1} and \eqref{massin521} in the
nonlinear argument of the next section. We state here the theorem alternative to Theorem~\ref{main1} that this
would yield.

\begin{thm}\label{mainalt1}
Under assumptions of Theorem~\ref{main1}, with chemotaxis present, a quarter of the initial mass of $\rho_2$ will react by time
$\tau_C \leq C_1 L^2/\gamma.$
\end{thm}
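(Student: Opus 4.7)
The plan is to argue by contradiction, mirroring the structure of the nonlinear argument used to prove Theorem~\ref{main1} in Section~\ref{system} but replacing the delicate $L^1$-to-ground-state convergence estimate (Theorem~\ref{thm:l1} and its consequence~\eqref{massin521}) by the simpler mass-transport bound of Corollary~\ref{cortran}.

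First I would set $\tau := C_1 L^2/\gamma$ with $C_1$ to be chosen, and suppose for contradiction that the quarter-reaction time exceeds $\tau$, so that $\|\rho_2(\cdot,\tau)\|_1 > \|\rho_2(\cdot,0)\|_1 - \pi\theta/4$. A direct adaptation of the proof of Proposition~\ref{compmassprop521}, with the constant $1/2$ systematically replaced by $1/4$, then yields the mass comparison
\[
\int_{B_r} \rho_1(x,t)\,dx \;\geq\; \int_{B_r} \rho(x,t)\,dx \;-\; \frac{\pi\theta}{4}
\]
for all $t \leq \tau$ and every $r \geq 0$, where $\rho$ solves the time-independent Fokker--Planck equation~\eqref{eq:fokker_planck_2} with the worst-case potential $H$ of~\eqref{H:explicit} and initial datum $\rho(\cdot,0) = \rho_1(\cdot,0)$.

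Next, Corollary~\ref{cortran} supplies a constant $C_0 > 0$ such that $\int_{B_{6/7}} \rho(x,t)\,dx \geq c M_0$ for every $t \geq C_0 L^2/\gamma$. Choosing $C_1 = 2 C_0$ and using $M_0/\theta \geq B$ with $B$ large, the mass comparison then gives $\int_{B_{6/7}} \rho_1(x,t)\,dx \geq c M_0/2$ for all $t \in [C_0 L^2/\gamma, \tau]$, so a substantial portion of the $\rho_1$-mass resides inside $B_{6/7}$ throughout the second half of the window.

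The main obstacle is the final step: converting this integral control of $\rho_1$ into a reaction lower bound strong enough to exhaust the budget $\pi\theta/4$ on $[C_0 L^2/\gamma,\tau]$. The crucial observation is that the contradiction hypothesis, together with the identity $\rho_2(x,t) = \rho_2(x,0)\exp(-\epsilon\int_0^t \rho_1(x,s)\,ds)$, forces the measure of $\{x \in B_1 : \rho_2(x,\tau) \geq \theta/2\}$ to be at least $\pi/2$, whose intersection with $B_{6/7}$ is bounded below by a positive universal constant. Combining this pointwise lower bound on $\rho_2$ with $\int_{B_{6/7}} \rho_1(\cdot,t)\,dx \gtrsim M_0$, the $L^\infty$ estimate on $\rho_1$ from Theorem~\ref{thm:L_infty_bound}, and either a pointwise comparison (preventing $\rho_1$ from being concentrated entirely on the already-reacted complementary region) or a time-averaging argument over the window, one extracts $\int \rho_1\rho_2\,dx \gtrsim M_0\theta$ on a subinterval of positive length. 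Integrating in time and using $M_0\epsilon/\gamma \geq B$ with $B$ large enough (depending on $c$, $C_0$, and universal constants), the accumulated reaction exceeds $\pi\theta/4$, producing the required contradiction and hence $\tau_C \leq C_1 L^2/\gamma$.
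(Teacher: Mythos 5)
Your first three steps are correct and track the paper: the mass comparison of Proposition~\ref{compmassprop521} still applies when the half-time is replaced by a quarter-time (with the constant $1/2$ adjusted to $1/4$, though this is immaterial since $M_0\gg\theta$), and the paper does indeed intend that Corollary~\ref{cortran} act as a drop-in replacement for Theorem~\ref{thm:l1} to place a mass $\gtrsim M_0$ of $\rho_1$ inside $B_{6/7}$ by time $\sim L^2/\gamma$. That substitution is the new ingredient in Theorem~\ref{mainalt1}, and you have identified it correctly. You also do not need the contradiction scaffolding: the paper argues directly, showing that $\rho_2$ in the outer annulus $B_1\setminus B_{6/7}$ (whose mass is just above $\pi\theta/4$) gets depleted, which gives the quarter reaction outright.

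The genuine gap is in your final step. You want $\int\rho_1\rho_2\,dx\gtrsim M_0\theta$ at some time slice, supported by a measure estimate on $\{\rho_2\geq\theta/2\}$ together with one of ``a pointwise comparison preventing $\rho_1$ from concentrating'' or ``a time-averaging argument.'' Neither of these exists in the form you would need, and the single-slice bound is in fact exactly what the Keller--Segel over-concentration phenomenon breaks: as the paper explains in Section~\ref{Disc}, $\rho_1$ is driven to concentrate in a region of radius $\sim\gamma^{-1/2}$, so the $\rho_1$-mass in $B_{6/7}$ can overwhelmingly sit where $\rho_2$ has already been consumed, making $\int\rho_1\rho_2\,dx$ small at every fixed time. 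The $L^\infty$ estimate of Theorem~\ref{thm:L_infty_bound} is an upper bound on $\rho_1$ and only works against you. The missing piece is the pass-through mechanism of Section~\ref{system}: because $\rho_1$ must cross each circle in the outer annulus to reach the origin, one derives -- from the differential inequality~\eqref{eq:Mt} for $M(r,t)$ plus the parabolic Harnack inequality of Lemma~\ref{prop_harnack} -- the pointwise-in-$r$, time-integrated bound $\int_0^{t_3+1}\rho_1(r,t)\,dt\gtrsim M_0/\gamma$ for every $r$ in the annulus, which then feeds directly into $\rho_2(r,t_3+1)=\rho_2(r,0)\exp\bigl(-\epsilon\int_0^{t_3+1}\rho_1(r,s)\,ds\bigr)\leq\rho_2(r,0)e^{-c\epsilon M_0/\gamma}$. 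It is this time-integrated, Harnack-uniformized estimate, not a favorable time slice, that converts the transport bound into a reaction bound, and your proposal does not reconstruct it.
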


\it Remark. \rm It is not difficult to design an additional argument that will show, under assumptions of Theorem~\ref{main1},
that larger than half of the initial mass of $\rho_2$ will react if we wait an additional time $\sim 1$. Basically, once mass $\sim M_0$
has entered $B_1,$ arguments similar to the ones we used above and employing mass comparison with the simple heat equation lead to the
conclusion that after an additional unit time, mass $\sim M_0$ can be found inside $B_{1/\sqrt{2}}$ (or in fact in a ball of smaller
radius, with a constant of proportionality depending on the radius). Then the pass-through argument of the following section
would yield consumption of the larger fraction of $\rho_2.$

\section{Decay for $\rho_2$ based on a ``pass-through'' argument}\label{system}

Let us now consider the nonlinear system (\ref{eq:original_system}):
\begin{equation}
\begin{split}
&\partial_t \rho_1 - \Delta \rho_1 +
\chi\nabla\cdot(\rho_1 \nabla(-\Delta)^{-1} \rho_2) = -\epsilon \rho_1 \rho_2\\
&\partial_t \rho_2 = -\epsilon \rho_1\rho_2.
\end{split}
\label{eq:actual}
\end{equation}

We focus on the case when the initial conditions $\rho_{1}(\cdot,0)$ and $\rho_{2}(\cdot,0)$
are radially symmetric, so that radial symmetry is preserved for all time.
Assume that $\rho_1(\cdot,0)$ is initially concentrated near $r=L$ with the
total mass $M_0$, while $\rho_2(\cdot,0) = \gamma \eta(x)$, with
$\eta \in C_0^\infty$. We think of~$\eta$ as very close to $\chi_{B_1}(x)$ in the
$L^1$ norm. As in the introduction,
we assume that $\epsilon M_0 \gg \gamma \gg 1,$ and $M_0 \gg \theta.$ As we will see,
the constant $B$ involved in $\gg$ will depend on the value of the ratio $\chi \gamma/\epsilon$
and would have to be larger if it is small (but can be taken uniformly for all larger values
of this parameter ratio).
Combining Proposition \ref{compmassprop521} and Theorem \ref{thm:l1} together, we obtain that
if  $t_3 \leq \tau_C$, with $t_3$ given in Theorem \ref{thm:l1}, and $\tau_C$
the half-time of $\rho_2$, then at least than ${1}/{4}$ of the mass of $\rho_1$ must have entered $B_{1/2}$ by the time $t_3$.

In this section, we will use this result to obtain decay estimates on
the mass of $\rho_2$ which will show that, in fact, $\tau_C \leq t_3.$
Let us start with an heuristic argument to see how much of~$\rho_2$ should react by
the time $t_3$.
Since the drift velocity $\partial_r(-\Delta)^{-1} \rho_2 \sim -\gamma$
for all $r\in ({1}/{2}, 1)$, a generic particle of $\rho_1$
should take about $\sim\gamma^{-1}$ time to pass through the region $({1}/{2}, 1)$.
It will react with $\rho_2$ during this time with the coupling coefficient $\epsilon$, so that
approximately the~${\epsilon M_0}/{\gamma}$ portion of the mass of $\rho_2$ originally situated
in $B_1 \setminus B_{1/2}$ should be gone by the time $t_3$. In other words,
if ${\epsilon M_0}/{\gamma} \gg 1$, then we should have $\tau_C \leq t_3$.

We will discuss below why we have to resort to this ``pass-through" argument to
get an estimate on the reaction time. The reason has to do with the form
of the Keller-Segel chemotaxis term that leads to the possibility of
an excessive concentration of $\rho_1.$


The goal of this section is to rigorously justify the above heuristics.
The key step is the following proposition.
\begin{prop}\label{prop:mass_eaten}
Let $\rho_1, \rho_2$ be a solution to \eqref{eq:original_system} with radially symmetric
initial conditions. Assume that $\rho_1(\cdot,0)$ is concentrated near $r=L$
with the total mass $M_0$, and $\rho_2(\cdot,0) = \theta \eta(x)$ as described above.
Assume that $\epsilon M_0 \gg \gamma \gg 1$.
Then the following holds with some universal constant $c>0$, where $t_3>0$ is as given by \eqref{t3}:
\begin{equation}\label{goal_prop51}
\int_0^{t_3+1} \rho_1(r,t) dt  \geq \frac{cM_0}{\gamma}, \text{ for all }r \in (1/2, 1).
\end{equation}
\end{prop}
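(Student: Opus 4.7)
The plan is a mass-flux versus time-integrated-density comparison. Set $T := t_3 + 1$ and argue under the hypothesis $T \leq \tau_C$ (otherwise the half-time has already been reached and the proposition plays no role in the main application). For each $r \in (1/2, 1)$, I will bound the net inward flux
\[
A(r) := \int_0^T F(r, t)\,dt, \qquad F(r, t) := 2\pi r\,\partial_r \rho_1(r, t) + \chi\,\rho_1(r, t)\,N(r, t),\quad N(r, t) := \int_{B_r}\rho_2\,dx,
\]
from below by $\gtrsim M_0$, and then exploit the pointwise bound $F_{\mathrm{chem}} := \chi \rho_1 N \leq \pi\gamma\,\rho_1$ (which follows from $N \leq \|\rho_2\|_1 \leq \pi\theta$) to turn the flux bound into $\int_0^T \rho_1(r, t)\,dt \gtrsim M_0/\gamma$.

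To produce the flux lower bound, Proposition~\ref{prop:mass_comparison} composed with Theorem~\ref{thm:l1} (applied with $\sigma$ small) yields, for $t \in [t_3, \tau_C]$ and $r \leq 1/\sqrt 2$,
\[
M_1(r, t) := \int_{B_r}\rho_1(x, t)\,dx \;\geq\; (2r^2 - 0.1)M_0 - \tfrac{\theta\pi}{2} \;\geq\; c_0 M_0
\]
uniformly for $r \in [1/3, 1/\sqrt 2]$ (using $M_0 \gg \theta$), and monotonicity of $M_1$ in $r$ extends the bound to $r \in [1/3, 1]$. Since $M_1(r, 0) \ll M_0$ by \eqref{oct714} and the reaction term is non-negative, the radial mass balance $M_1(r, T) - M_1(r, 0) + \epsilon \int_0^T\!\!\int_{B_r}\rho_1\rho_2\,dx\,dt = A(r)$ gives $A(r) \geq c_1 M_0$ uniformly for $r \in [1/3, 1]$.

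Write $\Phi(r) := \int_0^T \rho_1(r, t)\,dt$ and $A_\bullet(r) := \int_0^T F_\bullet(r, t)\,dt$, so that $A_{\mathrm{chem}}(r) \leq \pi\gamma\,\Phi(r)$ and $A_{\mathrm{diff}}(r) = 2\pi r\,\Phi'(r)$. The core argument is a minimum-point argument: let $r_m \in [1/3, 1]$ minimize the smooth function $\Phi$. If $r_m$ is interior, then $\Phi'(r_m) = 0$; if $r_m = 1$, then $\Phi'(r_m^-) \leq 0$. In both cases $A_{\mathrm{diff}}(r_m) \leq 0$, so $A_{\mathrm{chem}}(r_m) \geq A(r_m) \geq c_1 M_0$, which forces $\Phi(r_m) \geq c_1 M_0/(\pi\gamma)$ and hence $\Phi(r) \geq c M_0/\gamma$ for every $r \in [1/3, 1] \supset (1/2, 1)$.

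The delicate case $r_m = 1/3$ is ruled out by contradiction. Suppose $\Phi(r_0) < c M_0/\gamma$ for some $r_0 \in (1/2, 1)$, with $c \leq c_1/(2\pi)$. Let $[a, b]$ be the maximal interval through $r_0$ on which $\Phi \leq c M_0/\gamma$. On $[a, b]$, $A_{\mathrm{chem}} \leq c\pi M_0 \leq c_1 M_0/2$, so $A_{\mathrm{diff}} \geq c_1 M_0/2$, i.e. $\Phi'(r) \geq c_1 M_0/(4\pi r)$. Integrating yields $\Phi(b) - \Phi(a) \geq (c_1 M_0/(4\pi))\log(b/a)$. If $a > 1/3$, continuity forces $\Phi(a) = c M_0/\gamma = \Phi(b)$, hence $b = a$; but then $r_0 = a$ and $\Phi(r_0) = c M_0/\gamma$, contradicting strict inequality. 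If $a = 1/3$, the inequality $\log(3b) \geq \log(3/2)$ (which holds since $b \geq r_0 > 1/2$) gives a fixed positive lower bound $\Phi(b) - \Phi(1/3) \geq (c_1/(4\pi))\log(3/2)\,M_0$, which exceeds $c M_0/\gamma$ once $\gamma$ is above a universal threshold, again a contradiction. The main obstacle throughout is the indefinite sign of the diffusive flux $F_{\mathrm{diff}} = 2\pi r\,\partial_r \rho_1$; the minimum-point trick replaces a sign question with the critical-point identity, and the logarithmic room afforded by enlarging the working interval from $(1/2, 1)$ to $[1/3, 1]$ (justified by Theorem~\ref{thm:l1}'s quantitative mass bound down to $r = 1/3$) is exactly what closes the left-endpoint case.
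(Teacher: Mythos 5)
Your proof is correct, and it takes a genuinely different route from the paper's. The paper first derives an \emph{averaged} bound on $\int_0^{t_3}\partial_r M$ over intervals $I$ of length $\gamma^{-1}$ by integrating the mass equation \eqref{eq:Mt} twice in $r$, and then needs the parabolic Harnack inequality (Lemma~\ref{prop_harnack}, after a $y=\gamma r$, $\tau=\gamma^2 t$ rescaling of \eqref{eq:Yt}) to upgrade the average to a pointwise statement. You instead pass directly through the radial flux decomposition $F = F_{\mathrm{diff}} + F_{\mathrm{chem}}$, obtain the lower bound $A(r)\geq c_1 M_0$ from the mass-balance identity together with the same concentration input (Proposition~\ref{compmassprop521} plus Theorem~\ref{thm:l1}), note the pointwise estimate $F_{\mathrm{chem}}\leq C\gamma\rho_1$ (which is the same drift bound $|\partial_r H|\lesssim\gamma$ the paper uses), and then close with a calculus/minimum-point argument on $\Phi(r)=\int_0^T\rho_1(r,t)\,dt$, with the maximal-interval contradiction handling the left endpoint. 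The key gain from widening $(1/2,1)$ to $[1/3,1]$ is exactly the logarithmic room that rules out the left-endpoint case, and this widening is legitimate because Theorem~\ref{thm:l1} gives the quantitative mass estimate for any $r\le r_1=1/\sqrt 2$ (and $2(1/3)^2-0.1>0$). Your route is more elementary — it makes no use of a Harnack inequality — and, notably, it does not appear to invoke the hypothesis $\chi\gamma/\epsilon\geq c$ from Theorem~\ref{main1}, which the paper needs precisely to control the zeroth-order reaction coefficient in the rescaled parabolic equation before applying Harnack (see \eqref{112619}). One stylistic remark: the initial minimum-point step is subsumed by the maximal-interval contradiction argument, which already handles all cases; and your explicit flagging of the implicit hypothesis $T=t_3+1\le\tau_C$ is well taken, since the paper's own proof uses Proposition~\ref{prop:mass_comparison} at $t=t_3$, which likewise requires $t_3\le\tau_C$.
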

Before we prove the proposition, let us point out that it implies Theorem~\ref{main1}.
\begin{proof}[Proof of Theorem~\ref{main1}]
The second equation in (\ref{eq:actual}) implies that
\[
\rho_2(r,t) = \rho_2(r,0) \exp\Big\{-\epsilon \int_0^t \rho_1(r,s)ds\Big\},
\]
so that  if (\ref{goal_prop51}) holds, then
\[
\frac{\rho_2(r,t_3+1)}{\rho_2(r,0)} = \exp\Big\{-\epsilon\int_0^{t_3+1} \rho_1(r,t) dt\Big\}
\leq e^{-{\epsilon c M_0}/{\gamma}},~~
\hbox{ for all $r \in (1/2, 1)$.}
\]
Thus, if ${\epsilon M_0}/{\gamma} \gg 1$, then most of the mass of $\rho_2$ originally supported in $B_1 \setminus B_{1/2}$ will react away by time $t_3+1$ and
the half-time $\tau_C$ satisfies $\tau_C \leq t_3+1$.
\end{proof}

Recall that in the pure diffusion case, we have
\[
\tau_D \gtrsim \frac{L^2}{\log(M_0\epsilon)}.
\]
Comparing this with $t_3+1,$ and assuming that ${L^2}/{\gamma} \gtrsim \log \gamma$, we see
that chemotaxis would significantly reduce the half-time of reaction in the regime
\[
1 \ll \gamma \ll M_0\epsilon \ll e^\gamma.
\]
As we mentioned in the introduction, such relationship between parameters is natural in some applications.

The rest of this section contains the proof of Proposition \ref{prop:mass_eaten}.
As before, we set
\[
H(\cdot,t) := \chi (-\Delta)^{-1} \rho_2(\cdot,t).
\]
Since $H(\cdot,t)$ is radial, we denote it by $H(r,t)$.

Recall from (\ref{eq:M}) that 
\[
M(r,t) =  \int_{B_r} \rho_1(x,t) dx,
\]
satisfies
\begin{equation}\label{eq:Mt}
\partial_t M - \partial^2_{rr}M + \frac{1}{r} \partial_r M +  (\partial_r M) (\partial_r  H)
+ \epsilon \int_{B_r} \rho_1 \rho_2 dx = 0.
\end{equation}
Since $\rho_1(r,t) = (2\pi r)^{-1}{\partial_r M(r,t)}$, to prove \eqref{goal_prop51}, it suffices to show that
\begin{equation}\label{goal_prop51_new}
\int_0^{t_3+1} \partial_r M(r,t) dt  \geq \frac{cM_0}{\gamma}, \text{ for all }r \in (1/2, 1).
\end{equation}

Let $I = (a,b) \subset ({1}/{2}, 1)$ be an arbitrary interval.
For any $s \in (a,b)$, integrating \eqref{eq:Mt} over~$(a,s)$ in $r$ gives
\[
\begin{split}
\int_a^s \partial_t M(r,t) dr &= \partial_r M(s,t) - \partial_r M(a,t) - \int_a^s \left(\frac{1}{r} \partial_r M(r,t) + \partial_r M(r,t) \partial_r H(r)\right) dr  \\
& -\int_a^s \epsilon \int_{B_r} \rho_1\rho_2 dx dr
\leq \partial_r M(s,t) + C\gamma \int_a^s \partial_r M(r,t) dr,
\end{split}
\]
since $\partial_r M \geq 0$, $\partial_r H_r \geq -C\gamma$, and $\rho_1,\rho_2 \geq 0$.
As $M(r,0)=0$ for all $r<1$, since $\rho_1$ is initially concentrated near $r=L$,
integrating this inequality in time from $t=0$ to~$t=t_3$ gives
\[
\int_a^s 
M(r,t_3) 
\,dr
\leq \int_0^{t_3} \partial_r M(s,t) dt + C\gamma \int_0^{t_3} \int_a^s \partial_r M(r,t) dr dt.
\]
Combining Proposition \ref{compmassprop521} and Theorem \ref{thm:l1},
we have $M(r,t_3)\geq {M_0}/{4}$ for all $r \in ({1}/{2},1)$,
and the above inequality becomes
\[
\int_0^{t_3} \partial_r M(s,t) dt + C\gamma \int_0^{t_3} \int_a^s \partial_r M(r,t) dr dt \geq \frac{(s-a) M_0}{4}.
\]
Integrating this inequality over $s\in I$ gives 
\[
\int_0^{t_3} \int_I \partial_r M(s,t) dsdt + C\gamma\int_0^{t_3} (b-a) \int_I \partial_r M(r,t) dr dt \geq \frac{(b-a)^2 M_0}{8},
\]
so that
\[
\int_0^{t_3} \frac{1}{|I|} \int_I \partial_r M(s,t) dsdt \geq \frac{ M_0}{8(|I|^{-1}+C\gamma) }.
\]
Therefore, for any interval $I \subset ({1}/{2},1)$ with $|I| = \gamma^{-1}$, we have that
\begin{equation}\label{eq:int_average}
\frac{1}{|I|} \int_I \int_0^{t_3}  \partial_r M(s,t) dt ds \geq \frac{cM_0}{\gamma}.
\end{equation}
This inequality shows that \eqref{goal_prop51_new} holds in each such
interval $I$ in an average sense. To fisnih the proof of Proposition \ref{prop:mass_eaten},
we need to rule out the possibility that
\[
\int_0^{t_3} \partial_r M(s,t) dt
\]
is distributed very non-uniformly among $s\in I$.
We are going to show that such situation cannot happen
since $\rho_1$ satisfies a parabolic PDE.


Taking a derivative of \eqref{eq:Mt}, we deduce a
parabolic equation
\begin{equation}\label{eq:Yt}
\partial_t  u - \partial^2_{rr} u + \left(\frac{1}{r} + \partial_r H\right) \partial_r u + \left(-\frac{1}{r^2} + \partial^2_{rr} H +
\epsilon \rho_2(r,t)\right) u = 0
\end{equation}
for $u(r,t) := \partial_r M(r,t) = 2\pi r \rho_1(r,t) $.
\begin{lem}\label{prop_harnack}
There exists a universal constant $c>0$, such that any
non-negative solution to~\eqref{eq:Yt} satisfies
\[
u(r,t) \geq c\gamma^3 \int_{I} \int_{t_0}^{t_0+\gamma^{-2}} u(r,t) dt dr \quad\text{ for all }r\in I, t\in [t_0+\gamma^{-2},t_0 + 2\gamma^{-2}],
\]
for all intervals $I \subset ({1}/{2}, 1)$
with $|I| = 2\gamma^{-1}$, and $t_0 \geq \gamma^{-2}$.
\end{lem}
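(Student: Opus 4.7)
The plan is to read \eqref{eq:Yt} as a linear uniformly parabolic equation in one spatial variable whose drift and zero-order coefficients, although possibly as large as $O(\gamma)$ and $O(\gamma^2)$ respectively on the interval $(1/2, 1)$, become $O(1)$ under the natural parabolic rescaling with spatial scale $\gamma^{-1}$ and time scale $\gamma^{-2}$. On the rescaled equation the claim reduces to a standard parabolic Harnack-type inequality, and the factor $\gamma^3$ in the statement is precisely the Jacobian of the rescaling.

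The first step is to record pointwise bounds on the coefficients $b(r,t) := 1/r + \partial_r H(r,t)$ and $c(r,t) := -1/r^2 + \partial_{rr}^2 H(r,t) + \epsilon\rho_2(r,t)$ of \eqref{eq:Yt}. Since $\partial_t \rho_2 = -\epsilon\rho_1\rho_2 \leq 0$ and $\rho_2(\cdot, 0) \leq \theta$, we have $\rho_2(\cdot, t) \leq \theta$ for all $t \geq 0$. The divergence identity \eqref{eq:div_thm} then forces $|\partial_r H| \leq C\gamma$, and the relation $\partial_{rr}^2 H = -\chi\rho_2 - (1/r)\partial_r H$ gives $|\partial_{rr}^2 H| \leq C\gamma$ on $(1/4, 5/4) \supset (1/2, 1)$. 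The delicate estimate is the bound $\epsilon\rho_2 \leq \epsilon\theta \leq C\gamma^2$: invoking the hypothesis $\chi\gamma/\epsilon \geq c$ from Theorem~\ref{main1}, under whose assumptions Lemma~\ref{prop_harnack} is applied, and using $\gamma = \chi\theta$, we get $\epsilon\theta \leq \chi\theta\gamma/c = \gamma^2/c$. Altogether $|b| \leq C\gamma$ and $|c| \leq C\gamma^2$ on $(1/4, 5/4)$.

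The rescaling step then proceeds as follows. Fix the center $r^\ast$ of $I$ and set $\tilde r := \gamma(r - r^\ast)$, $\tilde t := \gamma^2(t - t_0)$, and $v(\tilde r, \tilde t) := u(r, t)$. Then $I$ rescales to $[-1, 1]$, the time intervals $[t_0, t_0 + \gamma^{-2}]$ and $[t_0 + \gamma^{-2}, t_0 + 2\gamma^{-2}]$ become $[0, 1]$ and $[1, 2]$, and $v$ satisfies
\[
\partial_{\tilde t} v - \partial_{\tilde r \tilde r}^2 v + \tilde b(\tilde r, \tilde t)\, \partial_{\tilde r} v + \tilde c(\tilde r, \tilde t)\, v = 0
\]
on $[-3, 3] \times [-1, 2]$ with $\tilde b = b/\gamma$ and $\tilde c = c/\gamma^2$ uniformly bounded (the spatial cushion $[-3, 3]$ lies inside the rescaling of $(1/4, 5/4)$ for large $\gamma$, and the time cushion $[-1, 0]$ is admissible because $t_0 \geq \gamma^{-2}$). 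The parabolic Harnack inequality for non-divergence form uniformly parabolic equations with bounded coefficients (Krylov--Safonov), applied if necessary through a short Harnack chain to cover order-one parabolic distance, then produces a constant $c_0 > 0$ depending only on the above coefficient bounds such that
\[
v(\tilde r_2, \tilde t_2) \geq c_0 \int_{-1}^1 \int_0^1 v(\tilde s, \tilde\sigma)\, d\tilde\sigma\, d\tilde s
\]
for every $(\tilde r_2, \tilde t_2) \in [-1, 1] \times [1, 2]$. Unwinding the rescaling via $d\tilde s\, d\tilde\sigma = \gamma^3\, ds\, d\sigma$ delivers the claimed inequality.

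The main obstacle I anticipate is securing the bound $\epsilon\rho_2 \leq C\gamma^2$: without the hypothesis $\chi\gamma/\epsilon \geq c$ of Theorem~\ref{main1}, the zero-order coefficient grows faster than the parabolic scaling can absorb, and the Krylov--Safonov constant degenerates as $\gamma \to \infty$, breaking the $\gamma$-independent conclusion. The ``universal'' constant $c$ in the statement of Lemma~\ref{prop_harnack} is therefore to be read as depending on the constant $c$ in $\chi\gamma/\epsilon \geq c$, consistent with the overall constant structure of Theorem~\ref{main1}.
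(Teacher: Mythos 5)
Your proposal follows essentially the same route as the paper: rescale via $y=\gamma r$, $\tau=\gamma^2(t-t_0)$ so that the drift and zero-order coefficients of \eqref{eq:Yt} become $O(1)$ on $y\in(\gamma/2,\gamma)$ (using exactly the same bound $\epsilon\theta/\gamma^2=\epsilon/(\chi\gamma)\le c^{-1}$ that the paper records in \eqref{112619}), and then apply the parabolic Harnack inequality, with the $\gamma^3$ factor arising as the Jacobian of the rescaling. Your additional remarks --- spelling out $|\partial_r H|\le C\gamma$, $|\partial_{rr}H|\le C\gamma$, the role of the time cushion $t_0\ge\gamma^{-2}$, and that a short Harnack chain may be needed to cover the full parabolic box --- are correct elaborations of steps the paper leaves implicit.
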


\begin{proof}
Let us rescale \eqref{eq:Yt}, setting $y = \gamma r, \tau =\gamma^2  (t-t_0)$. In the new coordinates, $u$ satisfies
\[
u_\tau - u_{yy} + b(y) u_y + c(y,\tau) u = 0,
\]
where $|b(y)| \leq C$ and $|c(y, \tau)| \leq C$ for
all $y\in ({\gamma}/{2},\gamma), \tau\geq 0$.
The bounds on $b$ and $c$ follow from the facts
that $r\in ({1}/{2},1)$, $|H_r| \leq C\gamma, |H_{rr}| \leq C\gamma$,
$\rho_2 \leq \|\rho_2(\cdot,0)\|_\infty\leq \theta$,
and
\begin{equation}\label{112619} \frac{\epsilon \theta}{\gamma^2} = \frac{\epsilon}{\chi \gamma} \leq c^{-1} \end{equation}
(where $c$ is from Theorem~\ref{main1}).

By the parabolic Harnack inequality (e.g. \cite[Theorem 6.27 or Corollary 7.42]{lieberman}),
for any interval $I' \subset ({\gamma}/{2},\gamma)$ with length $2$, we have
\[
u(y,\tau) \geq C \int_{I'} \int_0^1 u(y,t) dt dy \quad \text{ for all }y \in I', \tau\in[1,2];
\]
here the constant $C$ depends on $c$ in \eqref{112619}.
Translating this back into the original coordinates finishes the proof.
\end{proof}

Consider the time intervals $J_k := [2k\gamma^{-2}, 2(k+1)\gamma^{-2}]$,
$k\in \mathbb{N}$, and let $n$ be the smallest integer such that $2(n+1)\gamma^{-2} \geq t_3$.
Then for any interval $I \subset ({1}/{2},1)$ with $|I| = 2\gamma^{-1}$, we can rewrite \eqref{eq:int_average} as
\begin{equation}\label{int_average2}
\sum_{k=0}^n \int_{I\times J_k} \partial_r M(r,t) dr dt \geq \frac{cM_0}{\gamma^2},
\end{equation}
while Lemma \ref{prop_harnack} gives, for each $k\ge 0$:
\begin{equation}\label{m_jk}
\partial_r M(r,t) \geq c\gamma^3
\int_{I\times J_k} \partial_r M(s,t) ds dt \quad\text{ for all $r\in I$ and
$t\in J_{k+1}$,} 
\end{equation}
so that
\[
\int_{J_{k+1}}\partial_r M(r,t)dt\geq c\gamma \int_{I\times J_k} \partial_r M(s,t) ds dt.
\]	
It follows that for each $r\in I$ we have
\begin{equation}\label{oct2802}
\begin{aligned}
&\int_0^{t_3+1} \partial_r M(r,t) dt \geq\int_0^{(n+2)\gamma^{-2}} \partial_r M(r,t)\,dt
=\sum_{k=0}^{n} \int_{J_{k+1}}\partial_r M(r,t)\,dt\\
&\geq 
c \gamma \sum_{k=0}^{n} \int_{I \times J_k} \partial_r M(s,t) \,dsdt \geq \frac{c M_0}{\gamma}.
\end{aligned}
\end{equation}
This finishes the proof of Proposition \ref{prop:mass_eaten}.


\section{Discussion}\label{Disc}

In this section, we briefly discuss the nature of the constraints in our main nonlinear application.
The arguments here are purely heuristic, though some of the statements can be made rigorous.
Observe that for $H(x)= \gamma (-\Delta)^{-1}\chi_{B_1}(x),$ the ground state is
\begin{equation}\label{biggs526}
e^H = \left\{ \begin{array}{ll} e^{{\gamma}(1-r^2)/4} & r <1 \\
r^{-{\gamma}/{2}}  & r \geq 1. \end{array} \right.
\end{equation}
A simple calculation shows that {for $r<1$ we have}
\[
\int_{B_r} e^H \,dx = \frac{4\pi}{\gamma}e^{\gamma/4} \left( 1 - e^{- {\gamma} r^2/4} \right),
\]
while
\[ \int_{(B_r)^c} e^H \,dx = \frac{4\pi}{\gamma}e^{\gamma/4}
\left(e^{-\gamma r^2/4}- e^{-\gamma/4} \right) + \frac{4\pi}{\gamma-4}. \]
Therefore, most of the mass of $e^H$ is concentrated in
a ball of radius $\sim \gamma^{-1/2}$
centered at the origin.

This explains why the radial constraint on the initial conditions
is needed to make touch with the heuristics. Indeed, consider $\rho_1$ that
is concentrated
initially at a distance $L$ away from the support of $\rho_2,$ in
a region of size $\sim 1$ (as opposed to radial).
If $\gamma$ is large, as this mass gets transported towards the origin,
it will enter the support of $\rho_2$ -- the unit ball centered at the origin --
through a narrow sector and then concentrate overwhelmingly in a tiny region near the origin.
After a time $\sim L^2/\gamma,$ the density $\rho_1$ will approximate $e^H$
given by \eqref{biggs526} since not much reaction has happened during
the passage through a narrow sector. Thus, even after the transport phase has taken place, the reaction rate is going to be penalized since $\rho_1$ is smaller than $M_0$
by a factor that is exponential in $\gamma$ on most of the support of $\rho_2.$ As $\rho_2$ gets depleted near the origin the potential and so
the configuration of $\rho_1$ will adjust, but this process is not straightforward to control. It seems clear that some essential extra time will be lost.

A similar issue applies in the "risky" regime $\epsilon M_0\ll 1$,
even in the radial case. Then, little reaction happens on
the pass through, while the reaction after the transport stage
incurs the same penalty due to the aforementioned excessive concentration.

Both of these constraints are due to an artifact of the specific form of the Keller-Segel chemotaxis term. The extreme concentration of $e^H$
can be seen as a consequence of the scaling $\chi \nabla (-\Delta)^{-1}\rho_2 \sim -\gamma$ near and on the support of $\rho_2,$ which is very large
when $\gamma$ is large. But in reality, there is always a speed limit on how fast biological agents can move. A
variation of the classical Keller-Segel model is the so-called flux limited chemotaxis system given by
\begin{equation}\label{flks}
\partial_t \rho_1 + \chi\nabla \cdot \left(\rho_1 \frac{\nabla c}{|\nabla c|}\psi(|\nabla c|)\right)-\Delta \rho_1 = -\epsilon \rho_1 \rho_2, \,\,\,c= (-\Delta)^{-1}\rho_2,\,\,\,
\,\,\,\,\partial_t \rho_2 = -\epsilon \rho_1 \rho_2.
\end{equation}
The function $\psi$ appearing in \eqref{flks} satisfies $\psi(0)=0,$ is monotone increasing, and saturates at some level that we can take equal to one (given that we have
an explicit coupling constant $\chi$). The system \eqref{flks} is more complex to analyze
due to the strongly nonlinear flux, but is more realistic.
A variety of flux limited Keller-Segel systems have been considered
recently in many works (see e.g. \cite{BW,HP} for more references);
in particular, papers \cite{DS,JV,PVW} provided derivation of the flux limited Keller-Segel system from kinetic models built on biologically
reasonable assumptions about the behavior of the modeled organisms.

In future work, we plan to adapt the techniques developed in this paper to analyze  \eqref{flks}. The adaptation is not straightforward,
but preliminary computations show that in this case the radial assumption is not necessary, and the case of the "risky" reaction can be handled.

\appendix
\section{Appendix}
\subsection{Proof of Theorem \ref{thm:L_infty_bound}}
\begin{proof}
Let us first assume that $\rho_0$ is non-negative.
The proof is almost identical to \cite[Theorem 5]{carlenloss}, which we include here for the sake of completeness. Let $r(t)$ be a $C^1$ increasing function to be specified later. We compute the time evolution of $\|\rho(t)\|_{r(t)}$ as follows, where we omit the $t, x$ dependence on the right hand side for notational simplicity:
\begin{equation}\label{evo_t1}
\begin{split}
\frac{d}{dt} \|\rho(\cdot,t)\|_{r(t)} &= -\frac{r'}{r^2} \|\rho\|_r \ln(\|\rho\|_r^r) + \frac{r'}{r} \|\rho\|_r^{1-r}\int \rho^r \ln \rho dx + \|\rho\|_r^{1-r} \int \rho^{r-1}\partial_t \rho dx\\
&= \frac{r'}{r^2}  \|\rho\|_r^{1-r} \int \rho^r \ln\left(\frac{\rho^r}{\|\rho\|_r^r}\right) dx +  \|\rho\|_r^{1-r} \int \rho^{r-1}(\Delta \rho - \nabla\cdot (\rho\nabla \Phi) - h\rho) dx\\
&\leq \frac{r'}{r^2}\|\rho\|_r^{1-r} \left(  \int \rho^r \ln\left(\frac{\rho^r}{\|\rho\|_r^r}\right) dx - \frac{4(r-1)}{r'} \int  |\nabla \rho^{\frac{r}{2}}|^2 dx\right) + \frac{(r-1)}{r}  \gamma \|\rho\|_r,
\end{split}
\end{equation}
where in the last inequality we use the assumptions $h(\cdot,t)\geq 0$ and $\Delta \Phi(\cdot,t) \geq -\gamma$ for all $t$, as well as the fact that $\rho$ remains non-negative for all $t\geq 0$.
Next we use a sharp form of the logarithm Sobolev inequality in $\mathbb{R}^n$. It is equation (7.17) in \cite{carlenloss}, and it is equivalent to Gross's logarithmic Sobolev inequality in \cite{gross} after a scale transformation. For all $f \in H^1(\mathbb{R}^d)$, the following holds for all $a>0$:
\begin{equation}\label{logsobolev}
\int_{\mathbb{R}^d} f^2 \ln\left(\frac{f^2}{\|f\|_2^2} \right) dx + \left(d + \frac{d}{2}\ln a \right) \int_{\mathbb{R}^d} f^2 dx \leq \frac{a}{\pi} \int_{\mathbb{R}^d} |\nabla f|^2 dx.
\end{equation}
Choosing $f = \rho^{r/2}$ and $a = \frac{4\pi(r-1)}{r'}$, \eqref{logsobolev} becomes
\[
 \int_{\mathbb{R}^d} \rho^r \ln\left(\frac{\rho^r}{\|\rho\|_r^r}\right) dx +  \left(d + \frac{d}{2} \ln\left(\frac{4\pi(r-1)}{r'}\right) \right) \|\rho\|_r^r \leq  \frac{4(r-1)}{r'} \int_{\mathbb{R}^d} |\nabla \rho^{\frac{r}{2}}|^2 dx.
\]
Applying this to \eqref{evo_t1} gives us
\[
\frac{d}{dt} \|\rho(t)\|_{r(t)} \leq  \frac{r'}{r^2}\|\rho\|_r \left( - d - \frac{d}{2} \ln\left(\frac{4\pi(r-1)}{r'}\right)\right) +  \frac{(r-1)}{r}  \gamma \|\rho\|_r .
\]
Let $G(t) := \ln \|\rho(t)\|_{r(t)}$. Then the above differential inequality becomes
\begin{equation}\label{diff_ineq_G}
\frac{dG}{dt} \leq \frac{r'}{r^2} \left( - d - \frac{d}{2} \ln\left(\frac{4\pi(r-1)}{r'}\right)\right) +  \frac{(r-1)}{r}  \gamma.
\end{equation}
Since our goal is to estimate $\|\rho(T)\|_\infty$ using $\|\rho(0)\|_1$ (where $T>0$ is an arbitrary time at which we want to obtain our estimate), let us set $r(0) = 1$ and $r(T) = p$, where $p>1$ will be sent to infinity at the end. Integrating \eqref{diff_ineq_G} in $[0,T]$ yields
\[
\begin{split}
\ln\left(\frac{\|\rho(T)\|_p}{\|\rho(0)\|_1}\right) &= G(T)-G(0) \leq \int_0^T \left( \frac{r'}{r^2} \left( - d - \frac{d}{2} \ln\left(\frac{4\pi(r-1)}{r'}\right)\right) +  \frac{(r-1)}{r}  \gamma \right)\, dt\\
&\leq -\int_0^T s' \left( - d - \frac{d}{2} \ln(4\pi(s-s^2)) + \frac{d}{2} \ln (-s') \right) dt + \gamma T  \quad (\text{let }s(t) := \frac{1}{r(t)})\\
&\leq \int_1^{\frac{1}{p}}  \left(d + \frac{d}{2} \ln(4\pi(s-s^2)) \right) ds  + \frac{d}{2} \int_0^T (- s') \ln (-s') dt + \gamma T.
\end{split}
\]
The first integral on the right hand side can be explicitly computed, and it is uniformly bounded by some constant $C(d)$ as $p\to \infty$. For the second integral, since $\int_0^T (-s') dt$ is fixed as $s(0)-s(T) = 1-\frac{1}{p}$, Jensen's inequality gives that the integral is minimized when $-s'$ is a constant. We thus set $-s' = \frac{1-\frac{1}{p}}{T}$, which yields
\[
\ln\left(\frac{\|\rho(T)\|_p}{\|\rho(0)\|_1}\right) \leq C(d) + \frac{d}{2} \left(1-\frac{1}{p}\right) \ln\left( \frac{ 1-\frac{1}{p} }{T}\right) + \gamma T,
\]
hence in the limit $p\to\infty$ we obtain
\begin{equation}\label{infty_norm_temp}
\|\rho(T)\|_\infty \leq C(d) T^{-\frac{d}{2}} e^{\gamma T} \|\rho(0)\|_1 \quad\text{ for all }T> 0.
\end{equation}
Note that $t^{-\frac{d}{2}} e^{\gamma t} $ reaches its minimum value $(\frac{2\gamma}{d})^{\frac{d}{2}} e^{\frac{d}{2}}$ at $t = \frac{d}{2\gamma}$. For $t\geq  \frac{d}{2\gamma}$, by applying the estimate \eqref{infty_norm_temp} with $t-\frac{d}{2\gamma}$ as the initial time (and using the fact that $\|\rho(t-\frac{d}{2\gamma})\|_1 = \|\rho(0)\|_1$), we obtain $\|\rho(t)\|_\infty \leq C(d) \gamma^{d/2}\|\rho_0\|_1$ for all $t\geq \frac{d}{2\gamma}$.  Combining this with \eqref{infty_norm_temp} gives
\[
\|\rho(t)\|_\infty \leq C(d) \max\{ t^{-d/2}, \gamma^{d/2}\} \|\rho(0)\|_1 \quad\text{ for all }t> 0.
\]

To establish the theorem for the case of sign changing $\rho_0,$ notice that the equation \eqref{eq:fp} is linear. Thus we can run the evolution separately for
the positive and negative parts of the initial data, and both solutions will satisfy \eqref{oct802}. By linearity, the actual solution of \eqref{eq:fp} is just the difference
of these two solutions and \eqref{oct802} clearly holds for it as well.

\end{proof}

{\bf Acknowledgement}. \rm
The authors acknowledge partial support of the NSF-DMS grants 1715418, 1846745, 1848790, 1900008 and 1910023.
We are grateful to Andrej Zlatos for insightful discussions.

\end{document}